\setlist{noitemsep}
\numberwithin{figure}{section}
\numberwithin{equation}{subsection}
\newtheorem{theorem}[figure]{Theorem}
\newtheorem{lemma}[figure]{Lemma}
\newtheorem{corollary}[figure]{Corollary}
\newtheorem{proposition}[figure]{Proposition}
\theoremstyle{definition}
\newtheorem{definition}[figure]{Definition}
\newtheorem{notation}[figure]{Notation}
\theoremstyle{definition}
\newtheorem{remark}[figure]{Remark}
\theoremstyle{definition}
\newtheorem{example}[figure]{Example}
\theoremstyle{definition}
\newtheorem{construction}[figure]{Construction}
\DeclareSymbolFontAlphabet{\mathbb}{AMSb} 
\DeclareSymbolFontAlphabet{\mathbbl}{bbold}
\newcommand{\Prism}{{\mathlarger{\mathbbl{\Delta}}}}
\numberwithin{figure}{section}
\numberwithin{equation}{section}
\theoremstyle{definition}
\theoremstyle{definition}
\theoremstyle{definition}
\theoremstyle{definition}
\theoremstyle{definition}
\newtheorem{question}[figure]{Question}
\mathchardef\mhyphen="2D
\DeclareMathOperator{\Spec}{Spec}
\DeclareSymbolFontAlphabet{\mathbb}{AMSb} 
\DeclareSymbolFontAlphabet{\mathbbl}{bbold}
\newcommand{\FGauge}{F\text{-}\mathrm{Gauge}}
\newcommand\restr[2]{{\left.\kern-\nulldelimiterspace#1\vphantom{\big|}\right|_{#2}}}
\newcommand{\suchthat}{\;\ifnum\currentgrouptype=16 \middle\fi\vert\;}
\newcolumntype{C}[1]{>{\centering\arraybackslash}p{#1}}
\title{Zeta function of $F$-gauges and special values}
\author{Shubhodip Mondal}
\address[Shubhodip Mondal]{University of British Columbia, Vancouver BC, Canada}
\email{smondal@math.ubc.ca}
\begin{document}

\begin{abstract}
In 1966, Tate proposed the Artin--Tate conjectures, which expresses special values of zeta function associated to surfaces over finite fields. Conditional on the Tate conjecture, Milne--Ramachandran formulated and proved similar conjectures for smooth proper schemes over finite fields. The formulation of these conjectures already relies on other unproven conjectures. In this paper, we give an unconditional formulation of these conjectures for dualizable $F$-gauges over finite fields and prove them. In particular, our results also apply unconditionally to smooth proper varieties over finite fields. A key new ingredient is the notion of ``stable Bockstein characteristic" that we introduce. Our proof uses techniques from the stacky approach to $F$-gauges recently introduced by Drinfeld and Bhatt--Lurie and the author's recent work on Dieudonn\'e theory using $F$-gauges.

\end{abstract}

\maketitle
\tableofcontents
\section{Introduction}
Let $p$ be a fixed prime and $q$ be a power of $p$. For a smooth proper variety $X$ over the finite field $\mathbb{F}_q$, one defines the zeta function of $X$ denoted as $$Z(X,t) := \mathrm{exp} \sum_{m \ge 1} \frac{|X(\mathbb{F}_{q^m})|t^m}{m}.$$ In 1966, Tate \cite{tate} proposed the Artin--Tate conjectures, which says that for a surface $X$ over $\mathbb{F}_q$, the Brauer group $\mathrm{Br}(X)$ is finite; conditional on this conjecture, they further conjectured an expression for certain special value of $Z(X,t)$ that involves $|\mathrm{Br}(X)|$ as well as the N\'eron--Severi group of $X$. Milne \cite{Mana} proved that for a surface $X$ over a finite field of odd characteristic, the conjecture regarding finiteness of $\mathrm{Br}(X)$ implies the conjecture regarding special values. Assuming the Tate conjecture, in \cite{MRjams}, Milne and Ramachandran proved an expression regarding special values of zeta functions associated to smooth projective varieties over finite fields. Further, conditional on the Tate conjecture, in \cite{MRpre}, they formulated certain conjectures regarding special values of zeta functions that one may expect to attach to a motive over a finite field.  In \cite{MRp}, Milne--Ramachandran studied this problem for zeta functions associated to objects in $D^b_{{c}}(\mathcal{R})$, where $\mathcal{R}$ is the Raynaud ring \cite{IR}. Roughly speaking, $D^b_{{c}}(\mathcal{R})$ may be thought of as the $p$-adic realization of the category of motives over finite fields. Conditional on certain assumptions, they formulated and proved expressions involving special values of zeta functions. In all of these work, even the formulation of the expressions regarding special values depend on other unproven conjectures or assumptions.

In this paper, we formulate and prove an \emph{unconditional} expression regarding special values of zeta functions associated to smooth proper varieties over finite fields. In particular, we remove the dependence on unproven conjectures or hypotheses that appeared in the previous results in this area. In fact, we work more generally with the category of $F$-gauges over $\mathbb{F}_q$ as defined and studied by Mazur, Ekedahl, Fontaine--Jannsen, and others \cite{Mazur, eke, FJ13}. We prefer to work with $F$-gauges instead of $D^b_{c}(\mathcal{R})$ because the former notion is more flexible and it also admits a generalization to mixed characteristic, as demonstrated in the recent work of Drinfeld \cite{prismatization} and Bhatt--Lurie \cite{fg} involving certain prismatization stacks. In particular, they define a stack denoted as $(\mathrm{Spec}\, \mathbb{F}_q)^{\mathrm{syn}}$ such that the category of $F$-gauges over $\mathbb{F}_q$ is equivalent to the derived category of quasicoherent sheaves on $(\mathrm{Spec}\, \mathbb{F}_q)^{\mathrm{syn}}$.

For a dualizable $F$-gauge $M$, in \cref{zetafu}, we define a zeta function $Z(M,t)$ attached to it. We set $\zeta (M, s):= Z(M, q^{-s})$. One of the key new ingredients in our work is the notion of \textit{stable Bockstein characteristic}, which we introduce in \cref{stablebock}. Roughly speaking, this gives a way to ``measure" certain infinite type complexes. Using this notion, for every $r \in \mathbb{Z}$, we introduce an invariant $\mu_{\mathrm{syn}}(M, r)$ to the $F$-gauge $M$ (see \cref{introdef}). Our main result is as follows.

\begin{theorem}\label{mainthm}
Let $M$ be a dualizable $F$-gauge over $\mathbb{F}_q$. Let $r \in \mathbb{Z}$. Suppose that $\rho$ is the order of the zero of $\zeta (M, s)$ at $s=r$. Then 
$$\left| \lim_{s \to r} \frac{\zeta (M,s)}{(1-q^{r-s})^\rho}\right| _p = \frac{1}{\mu_{\mathrm{syn}}(M,r)q^{\chi(M,r)}}.$$\end{theorem}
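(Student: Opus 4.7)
My plan is to convert both sides of the desired identity into invariants of the Frobenius-equivariant cohomology of $M$ on the syntomic stack $(\Spec \mathbb{F}_q)^{\mathrm{syn}}$ and match them term by term. The first step is to unwind the definition of $Z(M,t)$ from \cref{zetafu}: since $M$ is a dualizable quasicoherent sheaf on this stack, its pushforward to $\Spec \mathbb{F}_q$ (equivalently, the relevant syntomic/prismatic cohomology) produces a perfect complex over $W(\mathbb{F}_q)$ equipped with a Frobenius-semilinear endomorphism $F$, and $Z(M,t)$ should become an alternating product of characteristic polynomials $\prod_i \det(1 - tF \mid H^i(M))^{(-1)^{i+1}}$. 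Substituting $t = q^{-s}$ realizes $\zeta(M,s)$ as a rational function of $q^{-s}$ whose order of vanishing at $s=r$ equals the multiplicity $\rho$ of $q^r$ as an eigenvalue of $F$ on $\bigoplus_i H^i(M)$ (with alternating multiplicities).

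Writing $\zeta(M,s) = g(s)(1-q^{r-s})^\rho$ with $g(r)$ nonzero, it remains to compute $|g(r)|_p$, which I would split into two contributions. The first comes from Frobenius eigenvalues $\alpha \neq q^r$: each contributes a factor $1 - \alpha q^{-r}$ whose $p$-adic norm is controlled entirely by the slope $v_p(\alpha)$, which is in turn governed by the Hodge/Newton structure intrinsic to the $F$-gauge $M$ (read off from the filtration data attached to the stack $(\Spec \mathbb{F}_q)^{\mathrm{syn}}$). Summing the alternating contributions should recover $q^{-\chi(M,r)}$ for the Hodge-type Euler characteristic $\chi(M,r)$ defined in \cref{introdef}.

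The second, more delicate contribution is the limit coming from the $q^r$-eigenspace itself, after removing the $(1-q^{r-s})^\rho$ factor. In the classical setting of a smooth proper variety, this limit computes (up to sign) the alternating product of orders of the $p$-adic syntomic cohomology groups $H^i_{\mathrm{syn}}(X, \mathbb{Z}_p(r))$. The essentially new phenomenon is that for arbitrary dualizable $F$-gauges the analogous cohomology $R\Gamma_{\mathrm{syn}}(M(r))$ need not be of finite type, so the naive alternating product of orders is undefined; instead it is captured by the stable Bockstein characteristic $\mu_{\mathrm{syn}}(M,r)$ of \cref{stablebock}. My plan is to identify $|g(r)|_p$ with $\mu_{\mathrm{syn}}(M,r)^{-1}$ via a Bockstein spectral sequence argument: the $p$-adic Bockstein filtration on $R\Gamma_{\mathrm{syn}}(M(r))$ should record exactly the Jordan decomposition of $F$ at the eigenvalue $q^r$, with the stability of the Bockstein invariant playing the role that finiteness of cohomology plays in the classical picture.

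The principal obstacle will be exactly this last identification --- showing that the stable Bockstein characteristic, defined abstractly in the derived category, actually computes the relevant limit of characteristic polynomials in the non-finite-type setting. I expect this to rest on the stacky description of $F$-gauges due to Drinfeld and Bhatt--Lurie, together with the author's Dieudonn\'e-theoretic results on $F$-gauges; these provide the filtrations and duality needed to reduce the spectral sequence computation to a formal manipulation of characteristic polynomials twisted by Tate, and to justify the exchange of limits with the Bockstein filtration when the cohomology ceases to be finitely generated.
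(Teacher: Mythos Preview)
Your proposed decomposition of $|g(r)|_p$ is not correct, and this is a genuine gap rather than a matter of exposition. First, after dividing out $(1-q^{r-s})^\rho$, the exact $q^r$-eigenspace contributes nothing: each factor $1-\alpha q^{-s}$ with $\alpha=q^r$ is cancelled, and what survives is the product $\prod_{\alpha\neq q^r}|1-\alpha q^{-r}|_p$ (alternating over cohomological degree). So $\mu_{\mathrm{syn}}(M,r)$ cannot arise from the $q^r$-eigenspace or its Jordan structure. In the paper's computation (\cref{l3}), $\mu_{\mathrm{syn}}$ is identified with the sub-product over eigenvalues of slope exactly $r$ but $\alpha\neq q^r$, while $q^{\chi(M,r)}$ comes from the eigenvalues of slope strictly below $r$; eigenvalues of slope above $r$ contribute $1$. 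Your claim that $|1-\alpha q^{-r}|_p$ is ``controlled entirely by the slope $v_p(\alpha)$'' is precisely false on the slope-$r$ part.

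Second, even the slope-below-$r$ piece does not produce $q^{\chi(M,r)}$ for a general dualizable $F$-gauge. The quantity $\chi(M,r)$ is a Hodge invariant of the integral $F$-gauge, and Newton and Hodge data agree only at the endpoints; for intermediate $r$ (or for torsion $F$-gauges, where there are no Frobenius eigenvalues at all yet $\chi(M,r)$ and $\mu_{\mathrm{syn}}(M,r)$ are typically nontrivial and cancel) the slope count simply does not recover $\chi(M,r)$. The paper handles this by a genuine d\'evissage: it proves the identity $\mu_{\mathrm{syn}}(M,r)=q^{-\chi(M,r)}$ directly for $p$-power torsion $M$ using the Nygaard fiber sequence (\cref{l0}, via \cref{niceob}), then uses \cref{l1} and \cref{l2} to reduce an arbitrary $M$ to a torsion piece plus Breuil--Kisin twists of vector bundles with Hodge--Tate weights in $\{0,1\}$, and only for the latter does it perform the eigenvalue computation you have in mind (\cref{l3}). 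The additivity that makes this reduction legitimate is exactly \cref{be1}, and the finiteness needed to run \cref{uk} in the $p$-divisible case is \cref{ts}. Finally, note that $\mu_{\mathrm{syn}}(M,r)$ is the stable Bockstein characteristic for the operator $\gamma-1$ on $R\Gamma_{\mathrm{syn}}(\overline M,\mathbb{Z}_p(r))$, not the $p$-adic Bockstein on $R\Gamma_{\mathrm{syn}}(M,\mathbb{Z}_p(r))$; your last paragraph conflates the two.
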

In the above, $\chi(M,r)$ is defined to be $$\chi(M,r):= \sum_{\substack{i,j \in \mathbb{Z}, \\ i \le r}} (-1)^{i+j} (r-i)h^{i,j}(M),$$where $h^{i,j}(M)$ denotes the Hodge--numbers of the $F$-gauge $M$, as defined in \cref{hodgerea}. $|\cdot|_p$ denotes the normalized $p$-adic norm.

 As the notion of $\mu_{\mathrm{syn}}(M,r)$ is crucial in the unconditional formulation of \cref{mainthm}, we include some remarks and motivations related to it. Roughly speaking, $\mu_{\mathrm{syn}}(M,r)$ ``measures" the size of $R\Gamma_{\mathrm{syn}}(M, \mathbb{Z}_p(r))$, where the latter denotes syntomic cohomology of the dualizable $F$-gauge $M$ of weight $r$ (see \cref{sad}). Note that while $R\Gamma(M, \mathbb{Z}_p(r))[1/p]$ is a complex of finite dimensional $\mathbb{Q}_p$-vector space, the dimension (i.e., Euler characteristic) of $R\Gamma(M, \mathbb{Z}_p(r))[1/p]$ as a $\mathbb{Q}_p$-vector space is always zero (\cref{dim0}). Thus, dimension is not the desired notion in our context. Furthermore, the cohomology groups of $R\Gamma_{\mathrm{syn}}(M, \mathbb{Z}_p(r))$ are typically not finite abelian groups, which makes it difficult to measure its size. In the remark below, we discuss a similar, but different scenario.

\begin{remark}
Let $(\mathrm{Spf}\, \mathbb{Z}_p)^{\mathrm{syn}}$ be the ``syntomification" of $\mathrm{Spf}\,\mathbb{Z}_p$ as in \cite{fg, prismatization}. A coherent sheaf $M$ on $(\mathrm{Spf}\, \mathbb{Z}_p)^{\mathrm{syn}}$ may be called a coherent prismatic $F$-gauge over $\mathbb{Z}_p$. Let $G_{\mathbb{Q}_p}$ denote $\mathrm{Gal}(\overline{\mathbb Q}_p/ {\mathbb Q}_p ).$ With $M$, one can canonically attach a finite free $\mathbb{Z}_p$-module $T$ equipped with a $G_{\mathbb{Q}_p}$-action such that $V:=T[1/p]$ is a crystalline Galois representation. The syntomic cohomology of $M$ (in weight $0$) is defined to be $R\Gamma((\mathrm{Spf}\, \mathbb{Z}_p)^{\mathrm{syn}}, M)$. We will explain how to attach a \emph{numerical measure} to the syntomic cohomology group $H^1 ((\mathrm{Spf}\, \mathbb{Z}_p)^{\mathrm{syn}}, M).$ As proven in \cite{fg}, there is an isomorphism $H^1 ((\mathrm{Spf}\, \mathbb{Z}_p)^{\mathrm{syn}}, M)[1/p] \simeq H^1_f (G_{\mathbb{Q}_p}, V),$ where the latter denotes the Bloch--Kato Selmer groups defined in \cite{bloka}. Let $\mathbf{D}_{\mathrm{dR}}(V):= (V \otimes_{\mathbb{Q}_p} B_{\mathrm{dR}})^{\mathbb{Q}_p}$ \cite{fon}. There is a natural filtration on $\mathbf{D}_{\mathrm{dR}}(V)$ which we denote by $\mathrm{Fil}^*\mathbf{D}_{\mathrm{dR}}(V)$. Now in \cite{bloka}, Bloch--Kato defined an exponential map
$$\mathbf{D}_{\mathrm{dR}}(V)/ \mathrm{Fil}^0\mathbf{D}_{\mathrm{dR}}(V) \to H^1_{f}(G_{\mathbb{Q}_p}, V).$$ Under the assumption that the local $L$-function associated to $V$ does not vanish at $1$, it follows that the above map is an isomorphism. Via this isomorphism, one obtains a measure on $H^1_{f}(G_{\mathbb{Q}_p}, V)$ by using the Haar measure on $\mathbf{D}_{\mathrm{dR}}(V)/ \mathrm{Fil}^0\mathbf{D}_{\mathrm{dR}}(V)$ having total measure $1$. Now considering the measure of the image of the map $H^1 ((\mathrm{Spf}\, \mathbb{Z}_p)^{\mathrm{syn}}, M) \to 
 H^1_{f}(G_{\mathbb{Q}_p}, V)$, one can define a numerical invariant associated to $H^1((\mathrm{Spf}\, \mathbb{Z}_p)^{\mathrm{syn}}, M).$ 
\end{remark}

For a dualizable $F$-gauge $M$ over $\mathbb{F}_q$, one cannot apply any of the above techniques involving $p$-adic Galois representations or Haar measure to measure $R\Gamma_{\mathrm{syn}}(M, \mathbb{Z}_p(r)):= R\Gamma ((\mathrm{Spec}\, \mathbb{F}_q)^{\mathrm{syn}}, M\left\{r\right \})$ (\cref{sad}). Instead, we introduce some new techniques inspired by algebraic topology to ``measure" certain infinite type complexes. We work with an object $M \in D^b(\mathbb{Z}_p)$, equipped with an endomorphism $\theta: M \to M.$ In this situation, we define a certain chain complex that we call \emph{Bockstein complex} (see \cref{bockst}), and denote it by $\mathrm{Bock}^\bullet (M, \theta).$ In \cref{breka}, we discuss an interpretation of this complex in terms of the Beilinson $t$-structure on filtered derived category. After developing some formal properties of Bockstein complexes, in \cref{stablebock}, we show that under certain mild assumptions, the cohomology groups of $\mathrm{Bock}^\bullet (M, \theta^r)$ are finite length $\mathbb{Z}_p$-modules for all $r \gg 0$, and thus the (length) Euler characteristic $\chi^{l}(\mathrm{Bock}^\bullet (M, \theta^r))$ is well-defined. Further, in \cref{stabock}, we show that $$\lim_{r \to \infty} \frac{\chi^l (\mathrm{Bock}^\bullet(M, \theta^r))}{r}$$ exists and is an integer. We call this integer the \emph{stable Bockstein characteristic} and denote it by $\chi^l_{s} (\mathrm{Bock}^\bullet(M, \theta))$ We apply this machinery to measure $R\Gamma_{\mathrm{syn}}(M, \mathbb{Z}_p(r))$ as follows.

\begin{definition}\label{introdef}
For a dualizable $F$-gauge $M$ over $\mathbb{F}_q$, we define $$\mu_{\mathrm{syn}}(M, r):= p^{{\chi^l_s(\mathrm{Bock}^\bullet (R\Gamma_{\mathrm{syn}}(\overline{M}, \mathbb{Z}_p(r)), \gamma-1))}}.$$ Here, $\overline{M}$ denotes the base change of $M$ to $(\mathrm{Spec}\, \overline{\mathbb F}_q)^{\mathrm{syn}}$ and $\gamma$ denotes the Galois action on $R\Gamma_{\mathrm{syn}}(\overline{M}, \mathbb{Z}_p(r))$.
\end{definition}

Some work is needed to show that the formalism surrounding stable Bockstein characteristic is applicable to the pair $(R\Gamma_{\mathrm{syn}}(\overline{M}, \mathbb{Z}_p(r)), \gamma-1))$. This is carried out in \cref{someres}, where the notion of isocrystals play an important role. Below, we record some consequences of \cref{mainthm}.

\begin{remark}\label{katz}
 For a smooth proper variety $X$ over $\mathbb{F}_q$ one can associate a dualizable $F$-gauge $M(X)$ over $\mathbb{F}_q$. This can be explained in the stacky language as follows. One has a map $f: X^{\mathrm{syn}} \to (\mathrm{Spec}\,\mathbb{F}_q)^\mathrm{syn}$. We define $M(X):=Rf_* \mathcal{O}$. It follows from \cite[Thm.~3.3.5]{fg} and the work of Katz--Messing \cite{katzmes} that $Z(M(X), t)$ agrees with the usual zeta function $Z(X,t)$ of $X$. We denote $\mu_{\mathrm{syn}}(X, r):=\mu_{\mathrm{syn}}(M(X), r)$. As a consequence of \cref{mainthm}, we obtain that 
\begin{equation}\label{specia}
\left| \lim_{s \to r} \frac{\zeta (X,s)}{(1-q^{r-s})^\rho}\right| _p = \frac{1}{\mu_{\mathrm{syn}}(X,r)q^{\chi(X,r)}}.    
\end{equation}Here, $\chi(X,r):= \chi(M(X),r)$ is the same as $\sum_{\substack{i,j \in \mathbb{Z}, \\ i \le r}} (-1)^{i+j} (r-i)h^{i,j}$, where $h^{i,j}$ denotes the Hodge--numbers of $X$.
\end{remark}

\begin{remark}
 If one assumes certain finiteness conjectures as in \cite{MRpre}, or the conjecture that $q^{-r}$ is a semisimple eignenvalue of Frobenius on $H^i _{\mathrm{crys}}(X) \otimes \mathbb{Q}$ for all $i$ as in \cite{MRp}, then it follows that the cohomology groups of the complex (defined by multiplying with a canonical class in $H^1(\mathrm{Spec}\, \mathbb{F}_q, \mathbb{Z}_p)$)
$$\ldots  H^{i-1} (X, \mathbb{Z}_p(r)) \to H^i (X, \mathbb{Z}_p(r)) \to H^{i+1}(X, \mathbb{Z}_p(r)) \to \ldots $$are finite abelian groups. Milne--Ramachandran defined $\chi^\times (X, \mathbb{Z}_p(r))$ to be alternating product of the sizes of these abelian groups. Under these conjectures, the quantity $\mu_{\mathrm{syn}}(X,r)$ we defined can be directly shown to agree with $\chi^\times (X, \mathbb{Z}_p(r))$. Thus \cref{mainthm} combined with \cref{katz} recovers the work of Milne--Ramachandran for smooth proper varieties. 
\end{remark}

\begin{remark}
In the case of a smooth, proper, geometrically connected surface $X$ over $\mathbb{F}_q$, equation \cref{specia} gives an expression for the special value of $\zeta(X, s)$ at $s=1$ that circumvents the Artin--Tate conjecture regarding finiteness of $\mathrm{Br}(X)$. In the case of surfaces, $\mu_{\mathrm{syn}}(X,1)$ is related to the Brauer group and the N\'eron--Severi group. See \cref{surface}.
\end{remark}{}

\begin{remark}
 Under the semisimplicity assumptions as in \cite{MRp}, \cref{mainthm} also recovers the main theorem of Milne--Ramachandran in loc.~cit. This can be directly deduced from a structural result proven by Ekedahl \cite[Thm.~5.3]{eke}, where he showed that $D^b_{c}(\mathcal{R})$ embeds fully faithfully in the category of $F$-gauges.  We are therefore able to recover \cite{MRp} by entirely circumventing the formalism of de Rham--Witt complexes (see \cite{Luc1, IR}) and the detailed study of certain numerical invariants that appear in loc.~cit.
\end{remark}
 The method of the proof of \cref{mainthm} is very different from the previous approaches. Let us explain the key new ingredients in our proof. We use techniques from the stacky approach to $F$-gauges due to Drinfeld \cite{prismatization} and Bhatt--Lurie \cite{fg} (also see \cite[\S~3.2]{Mon}). We also use the author's previous work on Dieudonn\'e theory in terms of $F$-gauges \cite{Mon} (also see \cite{Mon1, madapusi}). One of the major difficulties in the proof is in working with the invariant $\mu_{\mathrm{syn}}(M,r)$ that we introduce. In \cref{stablebock}, we prove certain formal properties regarding the notion of stable Bockstein characteristic. This opens up the possibility of understanding $\mu_{\mathrm{syn}}(M,r)$ by devissage. Our proof of \cref{mainthm} proceeds via several reduction steps to ultimately reduce it to the case of $F$-gauges that are vector bundles on $(\mathrm{Spec} \,\mathbb{F}_q)^{\mathrm{syn}}$ with Hodge--Tate weights in $\left \{0, 1 \right \}$; such an $F$-gauge corresponds to $p$-divisible groups. We point out that these reduction steps are nontrivial and uses the new approach to syntomic cohomology in terms of cohomology of certain line bundles on $(\mathrm{Spec}\,\mathbb{F}_q)^{\mathrm{syn}}$, as well as the theory of isocrystals from $p$-adic Hodge theory. For a $p$-divisible group $G$, we prove certain explicit results regarding the associated $F$-gauge $M(G)$. This allows us to understand $\mu_{\mathrm{syn}} (M(G), r)$ as well as $\chi (M(G),r)$, which is then used to directly handle the case of zeta functions associated to $p$-divisible groups.

\subsection*{Notations and conventions}
\begin{enumerate}
    \item We use the language of stable $\infty$-categories \cite{luriehigher}. For an ordinary commutative ring $R,$ we will let $D(R)$ denote the derived $\infty$-category of $R$-modules, which is a stable $\infty$-category. We let $D^b(R)$ denote the full subcategory of $D(R)$ spanned by objects $K \in D(R)$ such that $H^i (K) =0$ for $|i| \gg 0$. For a quasisyntomic ring $S$, we let $(S)_\mathrm{qsyn}$ denote the quasisyntomic site of $S$ (see \cite[Variant.~4.33]{BMS2}).

    \item Let $\mathcal{C}$ be any Grothendieck site and let $\mathcal{D}$ be any presentable $\infty$-category. Then one can define the category of ``sheaves on $\mathcal{C}$ with values in $\mathcal{D}$" denoted by $\mathrm{Shv}_{\mathcal D}(\mathcal C)$ as in \cite[Def.~1.3.1.4]{spectra}. This agrees with the usual notion of sheaves when $\mathcal{D}$ is a $1$-category. Let $\mathrm{PShv}_{\mathcal{D}}(\mathcal C):= \mathrm{Fun}(\mathcal{C}^{\mathrm{op}}, \mathcal{D}).$ For an ordinary ring $R$, the classical (triangulated) derived category obtained from complexes of sheaves of $R$-modules on $\mathcal{C}$ is the homotopy category of the full subcategory of hypercomplete objects of $\mathrm{Shv}_{D(R)}(\mathcal{C})$

\item Let $G$ be a $p$-divisible group over a 
quasisyntomic ring $S.$ In this set up, for $n \ge 0,$ we can regard the group scheme of $p^n$-torsion of $G$, denoted by $G[p^n]$, as an abelian sheaf on $(S)_\mathrm{qsyn}.$ In this situation, the collection of $G[p^n]$'s naturally defines an ind-object, as well as a pro-object in $(S)_\mathrm{qsyn}.$ We will again use $G$ to denote $\mathrm{colim}\,G[p^n]$ as an object of $(S)_\mathrm{qsyn}.$ We will use $T_p(G)$ to denote $\lim G[p^n] \in (S)_\mathrm{qsyn},$ and call it the Tate module of $G.$ 

\item For a field $K$, we let $G_K$ denote the absolute Galois group of $K$. For any perfect field $k$ of characteristic $p$, we let $(\mathrm{Spec}\, k)^{\mathrm{syn}}$ denote the ``syntomification" of $\mathrm{Spec}\, k$, which is a stack \cite[Def.~4.1.1]{fg}. The derived $\infty$-category of quasicoherent sheaves on $(\mathrm{Spec}\, k)^{\mathrm{syn}}$, denoted as $D_{\mathrm{qc}}((\mathrm{Spec}\, k)^{\mathrm{syn}})$ will be called the category of $F$-gauges over $k$. We set $F\text{-}\mathrm{Gauge}(k):=D_{\mathrm{qc}}((\mathrm{Spec}\, k)^{\mathrm{syn}})$. For a non-stacky definition of the category $F\text{-}\mathrm{Gauge}(k)$, see \cite{FJ13} or \cite[\S~3.2]{Mon}. An object of $F\text{-}\mathrm{Gauge}(k)$ will simply be called an $F$-gauge. An $F$-gauge is called dualizable if it is a perfect complex when viewed as an object of $D_{\mathrm{qc}}((\mathrm{Spec}\, k)^{\mathrm{syn}}$. The full subcategory spanned by such objects will be denoted by $\mathrm{Perf}((\mathrm{Spec}\, k)^{\mathrm{syn}})$.

\item The stack $(\mathrm{Spec}\, k)^{\mathrm{syn}}$ has a canonical line bundle $\mathcal{O}\left \{1\right \}$; we call this the Breuil--Kisin twist. For an $F$-gauge $M$ and $n \in \mathbb{Z}$, we let $M \left \{ n \right \}:= M \otimes \mathcal{O}\left \{1 \right \}^{\otimes n}.$
\end{enumerate}{}

\subsection*{Acknowledgement} 
I would like to thank Niranjan Ramachandran for helpful conversations related to this paper at the annual meeting of Simons collaboration on Perfection (2024), and for an invitation to visit Maryland. I would also like to thank Kai Behrend, Bhargav Bhatt, Luc Illusie, Sujatha Ramdorai, and Xiaohan Wu for helpful conversations. During the preparation of this article, I was supported by the University of British Columbia, Vancouver.

\section{Isocrystals and syntomic cohomology}\label{someres}
Our main goal in this section is to prove certain finiteness result (\cref{corol}) involving syntomic cohomology of $F$-gauges (\cref{deffga}) that will be useful later. The notion of isocrystals play an important role in this section, and we discuss their relation with $F$-gauges. We start with some definitions.

\begin{notation}\label{nota}
 Below, we let $k$ denote any perfect field of char. $p>0$ and $K:= W(k) [\frac{1}{p}].$ Let $\varphi_K$ denote the automorphism of $K$ induced by the Witt-vector Frobenius. Let $K_{\sigma}[F]$ denote the Frobenius twisted polynomial ring over $k$, i.e., we have the relation $Fc = \sigma (c) F.$   
\end{notation}{}

\begin{definition}[$F$-gauges]\label{deffga}
    For any perfect field $k$ of characteristic $p$, we let $(\mathrm{Spec}\, k)^{\mathrm{syn}}$ denote the ``syntomification" of $\mathrm{Spec}\, k$, which is a stack as defined in \cite[Def.~4.1.1]{fg}. We set $F\text{-}\mathrm{Gauge}(k):=D_{\mathrm{qc}}((\mathrm{Spec}\, k)^{\mathrm{syn}})$, which will be called the category of $F$-gauges over $k$.
\end{definition}

\begin{remark}[Realizations]\label{notaa}
  For a smooth proper variety $X$ over $k$ one can associate a dualizable $F$-gauge $M(X)$ over $k$. By \cite{fg}, one has a map $f: X^{\mathrm{syn}} \to (\mathrm{Spec}\,k)^\mathrm{syn}$. We define $M(X):=Rf_* \mathcal{O}$. We discuss certain ``realizations" of $M(X)$ in terms of other cohomology theories. There is an open point $i: \mathrm{Spf}\,W(k) \to (\mathrm{Spec}\,k)^\mathrm{syn}$, and $i^* M(X)$ identifies with prismatic cohomology $R\Gamma_{\Prism}(X)$, which is further isomorphic to $R\Gamma_{\mathrm{crys}}(X)$ up to a Frobenius twist. There is also a closed point $j: B\mathbb{G}_m \to (\mathrm{Spec}\,k)^\mathrm{syn}$, and $j^* M(X)$ identifies with $\bigoplus_i R\Gamma (X, \Omega_X^i)[-i]$, considered as an object of $D_{\mathrm{qc}}(B\mathbb{G}_m).$ Further, there are two divisors $j_{dR,+}: \mathbb{A}^1/\mathbb{G}_m \to (\mathrm{Spec}\,k)^\mathrm{syn}$, $j_{HT,+}: \mathbb{A}^1/\mathbb{G}_m \to (\mathrm{Spec}\,k)^\mathrm{syn}$ such that, as a filtered object, $j_{dR,+}^* M(X), j_{HT,+}^* M(X)$ identifies with the Hodge filtration on de Rham cohomology $R\Gamma_{\mathrm{dR}}(X)$, and the conjugate filtration on de Rham cohomology (up to a Frobenius twist). The stack $(\mathrm{Spec}\, k)^{\mathrm{syn}}$ has a canonical line bundle $\mathcal{O}\left \{1 \right \}$ called the Breuil--Kisin twist. For any $n \in \mathbb{Z}$, by \cite[Prop.~4.4.2]{fg} (cf.~\cite[Prop.~8.4]{BMS2}), $R\Gamma((\mathrm{Spec}\, k)^{\mathrm{syn}}, M(X)\left \{n \right\})$ is isomorphic to weight $n$-syntomic cohomology $R\Gamma_{\mathrm{syn}}(X, \mathbb{Z}_p(n))$ (cf.~\cref{sad}).
\end{remark}{}

\begin{definition}[Hodge--Tate weights]
 Let $M \in F\text{-}\mathrm{Gauge}(k)$ be dualizable. Then $j^* M$ (\cref{notaa}) is a perfect complex on $B\mathbb{G}_m$, and can be identified with a finite direct sum $\bigoplus_{i \in S} M_i$, such that $M_i \in D(k)$ is nonzero for $i \in S$. The finite subset of integers $S$ will be called the Hodge--Tate weights of $M$.
\end{definition}{}

\begin{definition}[Isocrystals]\label{isocrystals}
 Let $k$ be a perfect field. An isocrystal over $k$ is a finite dimensional $K$-vector space $V$ equipped with a bijective Frobenius semilinear map $F: V \to V$. With morphisms defined in the obvious way, isocrystals form a category that we will denote by $\mathrm{Isocrys}(k).$ 
\end{definition}

\begin{example}\label{ex}
Let $r,s$ be two coprime integers with $r>0$. The $K$-vector space $$K_\sigma[F]/ K_{\sigma}[F](F^r - p^s)$$ is naturally equipped with a bijective Frobenius semilinear operator which gives it the structure of an isocrystal that will be denoted by $E_{s/r}.$ Note that the dimension of the vector space underlying $E_{s/r}$ is $r.$ Identifying the latter vector space with $K^r,$ one sees that the semilinear operator $F$ acts as follows:
$$F(x_1, \ldots, x_r) := (\varphi_K(x_2), \ldots, \varphi_K (x_{r}), p^s\varphi_K(x_1)).$$ 
\end{example}

\begin{remark}
When $s \ge 0$, the isocrystal $E_{r/s}$ admits a lattice given by $W(k)^{\oplus r}$, that is also preserved by $F$.     
\end{remark}

\begin{remark}\label{isocsyn} Let $\mathrm{Vect}((\mathrm{Spec}\, k)^\mathrm{syn})$ denote the category of vector bundles on the stack $(\mathrm{Spec}\, k)^{\mathrm{syn}}.$ In general, there is a forgetful functor $$\mathrm{Vect}((\mathrm{Spec}\, k)^\mathrm{syn}) \to \mathrm{Isocrys}(k).$$ Further,
any isocrystal can be used to define a vector bundle on $(\mathrm{Spec}\, k)^\mathrm{syn}$; this amounts to choosing a $W(k)$-lattice for the vector space underlying the isocrystal (which does not need to be preserved by $F$) (see \cite[Prop.~4.3.1]{fg}).   
\end{remark}

\cref{isocsyn} allows us to relate isocrystals and $F$-gauges. 
\begin{proposition}\label{isom}
Let $k$ be a perfect field. Then we have an equivalence of categories
$$\mathrm{Vect}((\mathrm{Spec}\, k)^\mathrm{syn}) \otimes \mathbb{Q}_p \simeq \mathrm{Isocrys}(k).$$
\end{proposition}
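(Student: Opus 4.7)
The plan is to verify that the natural forgetful functor from \cref{isocsyn} induces the desired equivalence. It sends a vector bundle $M$ on $(\mathrm{Spec}\,k)^{\mathrm{syn}}$ to the $K$-vector space $(i^*M)[\tfrac{1}{p}]$ equipped with its induced Frobenius, where $i:\mathrm{Spf}\,W(k)\hookrightarrow (\mathrm{Spec}\,k)^{\mathrm{syn}}$ is the open immersion of \cref{notaa}. Being $\mathbb{Z}_p$-linear and symmetric monoidal, this functor canonically descends to a symmetric monoidal functor
$$\Phi:\mathrm{Vect}((\mathrm{Spec}\,k)^{\mathrm{syn}})\otimes \mathbb{Q}_p\longrightarrow \mathrm{Isocrys}(k),$$
and my goal is to show that $\Phi$ is fully faithful and essentially surjective.

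Essential surjectivity of $\Phi$ is already supplied by the second assertion of \cref{isocsyn}: given $(V,F)\in\mathrm{Isocrys}(k)$, any choice of $W(k)$-lattice $L\subset V$ defines a vector bundle $\widetilde L$ on $(\mathrm{Spec}\,k)^{\mathrm{syn}}$ with $\Phi(\widetilde L)\cong (V,F)$.

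For full faithfulness, both categories are rigid symmetric monoidal and $\Phi$ preserves duals, so via the adjunction $\Hom(M,N)=\Hom(\mathcal{O},M^{\vee}\otimes N)$ it suffices to check that for every vector bundle $L$ on $(\mathrm{Spec}\,k)^{\mathrm{syn}}$ the natural map
$$H^0((\mathrm{Spec}\,k)^{\mathrm{syn}},L)\otimes_{\mathbb{Z}_p}\mathbb{Q}_p\longrightarrow \Phi(L)^{F=1}$$
is an isomorphism. The key input, to be extracted from the definition of the syntomification in \cite{fg}, is the Artin--Schreier-type fiber sequence at weight zero (after inverting $p$)
$$R\Gamma((\mathrm{Spec}\,k)^{\mathrm{syn}},L)\otimes \mathbb{Q}_p\;\simeq\;\mathrm{fib}\!\bigl(\varphi-1:L_{\Prism}[\tfrac{1}{p}]\to L_{\Prism}[\tfrac{1}{p}]\bigr),$$
where $L_{\Prism}:=i^*L$ is the underlying finite projective $W(k)$-module. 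Taking $H^0$ then identifies the left side with $\ker(\varphi-1\text{ on }L_{\Prism}[\tfrac{1}{p}])=\Phi(L)^{F=1}$, as needed; passage of $\otimes\mathbb{Q}_p$ through the kernel uses flatness of $\mathbb{Q}_p$ over $\mathbb{Z}_p$.

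The main obstacle I expect is the clean justification of the above fiber sequence from the stacky formalism of \cite{fg}: one must unwind the presentation of $(\mathrm{Spec}\,k)^{\mathrm{syn}}$ as a coequalizer of the two Frobenius maps on the Nygaard-filtered prismatization and argue that the Nygaard-filtration contribution disappears after inverting $p$, which uses that $k$ has characteristic $p$. A more hands-on fallback route would be to base change to $\overline{k}$, use the Dieudonn\'e--Manin decomposition to reduce to the simple isocrystals $E_{s/r}$ of \cref{ex}, realize each $E_{s/r}$ as an explicit vector bundle on $(\mathrm{Spec}\,\overline{k})^{\mathrm{syn}}$ and compute $\Hom$'s directly, and then descend to $k$ by Galois descent.
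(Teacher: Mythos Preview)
Your proposal is correct, but the paper's argument is considerably shorter and more direct. The paper leans entirely on the content of \cref{isocsyn}: by \cite[Prop.~4.3.1]{fg}, a vector bundle on $(\mathrm{Spec}\,k)^{\mathrm{syn}}$ is precisely a $W(k)$-lattice inside an isocrystal (with no compatibility with $F$ required of the lattice), and a morphism is a map of isocrystals carrying one lattice into the other. Faithfulness of $\Phi$ is then immediate, and fullness follows by clearing denominators: any map $f:V\to W$ of isocrystals satisfies $p^n f(L_M)\subseteq L_N$ for some $n$, so $f$ lies in $\Hom(M,N)\otimes\mathbb{Q}_p$. No cohomological computation is needed.

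Your route---reducing via rigidity to a computation of $H^0((\mathrm{Spec}\,k)^{\mathrm{syn}},L)\otimes\mathbb{Q}_p$ and invoking the fiber sequence $R\Gamma_{\mathrm{syn}}(L)[1/p]\simeq\mathrm{fib}(\varphi-1:L_K\to L_K)$---is sound; indeed, that fiber sequence is exactly what the paper establishes later (see the proof of \cref{co1}), arising from $\mathrm{Fil}^0 M[1/p]\simeq M^u[1/p]$ with $\mathrm{can}$ becoming the identity. So your ``main obstacle'' is real but surmountable. What your approach buys is a self-contained argument that does not unpack the lattice description of vector bundles, at the cost of front-loading the syntomic fiber sequence; the paper instead treats that description as the essential content and defers the cohomological identity to where it is actually needed.
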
{}

\begin{proof}
One obtains a faithful functor $\mathrm{Vect}((\mathrm{Spec}\, k)^\mathrm{syn}) \otimes \mathbb{Q}_p \to \mathrm{Isocrys}(k)$ from \cref{isocsyn} since the target category is $\mathbb{Q}_p$-linear. To see that it is full and essentially surjective, one can choose lattices.  
\end{proof}

\begin{remark}\label{isostack}Let $K$ be as in \cref{nota}.
 Let $\mathcal{I}so_K$ denote the stack defined as 
 \begin{equation}
   \mathrm{coeq} (\xymatrix{
  \mathrm{Spec}\,K\ar@<1ex>[r]^{\mathrm{id}}\ar@<-1ex>[r]_{\phi_K} & \mathrm{Spec}\,K
 })   
 \end{equation}{}
By the formal GAGA principle discussed in \cite[Lem.~3.4.11]{fg}, one can deduce a natural isomorphism 
$$\mathrm{Perf}((\mathrm{Spec}\, k)^{\mathrm{syn}})\otimes \mathbb{Q}_p \simeq \mathrm{Perf}(\mathcal{I}so_K).$$
\end{remark}

Note that $\mathrm{Vect}((\mathrm{Spec}\, k)^\mathrm{syn})$ is naturally equipped with a symmetric monoidal structure, which induces a symmetric monoidal structure on $\mathrm{Vect}((\mathrm{Spec}\, k)^\mathrm{syn}) \otimes \mathbb{Q}_p$.  As a consequence of \cref{isom}, we obtain a symmetric monoidal structure on $\mathrm{Isocrys}(k)$, which agrees with the classical one (e.g., see \cite{why}). The line bundle $\mathcal{O}\left \{1 \right \} \in \mathrm{Vect}((\mathrm{Spec}\, k)^\mathrm{syn})$ defines an object in $\mathrm{Isocrys}(k)$ that we again denote by $\mathcal{O}\left \{1 \right \},$ when no confusion is likely. 

\begin{definition}
 For $M \in \mathrm{Isocrys}(k)$ and $i \in \mathbb Z$, the $i$-th Breuil--Kisin twist of $M$ is defined by $M \otimes \mathcal{O} \left \{1 \right \}^{\otimes{i}} \in \mathrm{Isocrys}(k).$   
\end{definition}

Classically, the above twist is called the Tate twist. In this paper, we use the terminology of Breuil--Kisin twists to be compatible with the twists that appear for $F$-gauges. Concretely, this twist is described as follows. Let $M = (W, F),$ where $W$ is a $K$-vector space equipped with a bijective semilinear map $F$. Then $M \left \{ i \right \}$ is given by the pair $(W, \frac F {p^i}).$

\begin{definition}
We call an $M \in \mathrm{Isocrys}(k)$ effective if $F$ prserves some lattice. Note that for any $M \in \mathrm{Isocrys}(k),$ the isocrystal $M \left \{-i \right \}$ is effective for $i \gg 0.$   
\end{definition}

Now suppose that $k$ is algebraically closed. Then we have the following result, known as the Dieudonn\'e--Manin classification \cite{Manin}.

\begin{theorem}[Dieudonn\'e--Manin]
Let $k$ be algebraically closed. Then every isocrystal over $k$ is a direct sum of simple isocrystals. The simple isocrystals are all isomorphic to $E_{s/r}$ for coprime integers with $r>0.$     
\end{theorem}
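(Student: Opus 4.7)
The plan is to prove this in three stages: (i) verify that each $E_{s/r}$ is simple, (ii) establish a canonical slope decomposition of any isocrystal, and (iii) show that each pure-slope isocrystal is a direct sum of $E_{s/r}$'s for the corresponding slope.

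\textbf{Step (i).} If $W \subset E_{s/r}$ is a nonzero $F$-stable subspace of $K$-dimension $d$, then $F^r$ acts on $W$ as $p^s \cdot \mathrm{id}$, so the Newton slope of $W$ equals $s/r$ as well. Since the denominator of a Newton slope of a $d$-dimensional isocrystal must divide $d$, and $\gcd(r,s) = 1$ forces that denominator to equal $r$, we conclude $r \mid d$, hence $d \ge r$, so $W = E_{s/r}$.

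\textbf{Step (ii).} For a general isocrystal $V$ with $W(k)$-lattice $L$, the asymptotic behavior of $F^N L$ as $N \to \infty$ determines a Newton polygon with rational slopes $\lambda_1 < \dots < \lambda_k$. These induce a canonical $F$-stable decomposition
\begin{equation*}
V = \bigoplus_{j} V_{\lambda_j}, \qquad V_{\lambda_j} \text{ pure of slope } \lambda_j,
\end{equation*}
with projectors constructed as polynomial combinations of $F^N$, for $N$ a common multiple of the slope denominators, that separate the slope components. This step uses only that $k$ is perfect.

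\textbf{Step (iii).} Suppose $V$ has pure slope $\lambda = s/r$ with $\gcd(r,s) = 1$, $r > 0$, and set $n = \dim_K V$. The key claim is that the eigenspace
\begin{equation*}
V^{F^r = p^s} := \{ v \in V \mid F^r v = p^s v \}
\end{equation*}
is a vector space over $\mathbb{Q}_{p^r} := W(\mathbb{F}_{p^r})[1/p]$ of dimension $n/r$. To prove this, fix an $F^r$-stable lattice $L \subset V$ on which $F^r$ acts as $p^s$ times a $\sigma^r$-semilinear automorphism $T$; reducing $T$ mod $p$ gives a $\sigma^r$-semilinear automorphism of the finite-dimensional $k$-vector space $L/pL$. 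Because $k$ is algebraically closed, Lang's theorem applied to $\mathrm{GL}_n(k)$ with $\sigma^r$-twisted Frobenius produces a basis of $L/pL$ fixed by $T$; a successive approximation (Hensel-style) argument lifts this basis to a $T$-invariant basis of $L$, yielding $n$ elements of $V^{F^r = p^s}$. Selecting a $\mathbb{Q}_{p^r}$-independent subset $\{v_1, \dots, v_{n/r}\}$, each $v_i$ generates a $K_\sigma[F]$-submodule $K\langle v_i, F v_i, \dots, F^{r-1} v_i \rangle \cong E_{s/r}$ (the surjection from $E_{s/r}$ is an isomorphism by Step (i)). A dimension count, together with $K$-linear independence of these submodules (which follows from the common $F^r$-eigenvalue and $\mathbb{Q}_{p^r}$-independence of the $v_i$), gives
\begin{equation*}
V \;\cong\; E_{s/r}^{\oplus n/r}.
\end{equation*}
Combining with Step (ii), every isocrystal decomposes as $\bigoplus_j E_{s_j/r_j}^{\oplus m_j}$.

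The hard part is Step (iii), specifically the production of the eigenbasis of $V^{F^r = p^s}$ on a pure-slope isocrystal: the algebraic closure of $k$ is essential here through Lang's theorem, and is the analytic crux of the classification. Without this hypothesis the simple objects are richer (Galois-twisted forms of $E_{s/r}$), as visible from descent between $k$ and $\overline{k}$.
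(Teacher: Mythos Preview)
The paper does not prove this theorem; it is stated with a citation to Manin's original paper and used as a black box throughout. So there is no ``paper's own proof'' to compare against, and I will evaluate your sketch on its merits.

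Your overall architecture (simplicity of $E_{s/r}$, slope decomposition, then splitting each pure piece) is the standard one, and Steps (i)--(ii) are acceptable modulo some imprecision (e.g.\ in Step~(i), $F^r$ is $\sigma^r$-semilinear, not literally $p^s\cdot\mathrm{id}$; what you really use is that $F^r L_W = p^s L_W$ for a suitable lattice, which forces the length identity $r\cdot\mathrm{length}(L_W/FL_W)=s\cdot d$ and hence $r\mid d$).

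There is, however, a genuine gap in Step~(iii). First, the dimension claim is wrong: once you produce a $T$-fixed $W(k)$-basis of $L$, you have exhibited a $K$-basis of $V$ lying in $V^{F^r=p^s}$, so by Galois descent $V^{F^r=p^s}\otimes_{\mathbb{Q}_{p^r}}K\simeq V$ and hence $\dim_{\mathbb{Q}_{p^r}}V^{F^r=p^s}=n$, not $n/r$. Second, and more seriously, $\mathbb{Q}_{p^r}$-independence of $v_1,\dots,v_{n/r}$ does \emph{not} imply that the cyclic submodules $K_\sigma[F]\cdot v_i$ are independent. Already for $V=E_{s/r}$ itself, the standard basis vectors $e_1,\dots,e_r$ are $\mathbb{Q}_{p^r}$-independent elements of $V^{F^r=p^s}$, yet each one generates all of $E_{s/r}$. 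For $V=E_{s/r}^{\oplus 2}$ one can likewise choose two $\mathbb{Q}_{p^r}$-independent vectors both lying in the first summand.

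The fix is to remember that $F$ still acts on $V^{F^r=p^s}$, making it a module over the $\mathbb{Q}_p$-algebra $D:=\mathbb{Q}_{p^r,\sigma}[F]/(F^r-p^s)$. Since $\gcd(r,s)=1$, $D$ is a central division algebra over $\mathbb{Q}_p$ of dimension $r^2$; comparing $\mathbb{Q}_p$-dimensions gives $V^{F^r=p^s}\cong D^{\oplus n/r}$ as a $D$-module. A $D$-basis $v_1,\dots,v_{n/r}$ (not merely a $\mathbb{Q}_{p^r}$-independent set) then generates independent copies of $E_{s/r}$ inside $V$, and the dimension count finishes the argument. Without invoking $D$, you can alternatively induct: pick any nonzero $v\in V^{F^r=p^s}$, split off the resulting $E_{s/r}\subset V$ using semisimplicity of the category (or an explicit complement), and repeat.
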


\begin{remark}[Slopes and multiplicities]
For an isocrystal $M$ over an algebraically closed field $k$ the set of slopes of $M$ are defined to be rational numbers $s/r$ such that $E_{s/r}$ appears in the direct sum decomposition of $M$. The $K$-dimension of the direct sum of all summands of $M$ that are isomorphic to $E_{s/r}$ will be called the multiplicity of the slope $s/r$. 
\end{remark}{}

\begin{example}
 The isocrystal $E_{s/r}$ only has $s/r$ as a slope, which is of multiplicity $r$.    
\end{example}{}

\begin{remark}
If $k$ is algebraically closed and $M \in \mathrm{Isocrys}(k),$ then $M$ is effective if and only if in the direct sum decomposition of $M$ into simple objects, the $E_{s/r}$'s that appear all have $s \ge 0.$ Equivalently, all slopes of $M$ must be nonnegative. 
\end{remark}{}
\begin{proposition}\label{imp}
Let $k$ be algebraically closed and $M \in \mathrm{Isocrys}(k).$ Then cokernel of the map $$F - p^i: M \to M $$ is trivial and the kernel is a finite dimensional $\mathbb{Q}_p$-vector space for any $i \in \mathbb{Z}.$  
\end{proposition}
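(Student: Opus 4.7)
The plan is to reduce, via Breuil--Kisin twisting and the Dieudonné--Manin classification, to a single scalar equation in $K$ that I can then solve by case analysis on the sign of the slope.

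First I would twist. On the underlying $K$-vector space $W$ of $M$, multiplication by the invertible scalar $p^{-i}$ identifies $F - p^i$ with $F/p^i - 1$, and $F/p^i - 1$ is precisely the operator ``Frobenius minus identity'' on the Breuil--Kisin twist $M\{i\}$. So I may assume $i = 0$ and prove the statement for $F - 1: M \to M$. By the Dieudonné--Manin theorem recorded above, $M$ is a finite direct sum of simple isocrystals $E_{s/r}$, and $F - 1$ respects this decomposition, so it suffices to treat $M = E_{s/r}$ for each pair of coprime integers $r > 0$, $s \in \mathbb{Z}$.

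Using the explicit description in \cref{ex}, the equation $(F-1)(x_1,\ldots, x_r) = (y_1, \ldots, y_r)$ in $K^r$ is the system $\varphi_K(x_{j+1}) - x_j = y_j$ for $1 \le j \le r-1$ together with $p^s\varphi_K(x_1) - x_r = y_r$. Solving the first $r-1$ equations recursively expresses $x_2, \ldots, x_r$ as $K$-linear combinations of $\varphi_K$-powers of $x_1$ and the $y_j$; substituting into the last equation collapses the entire system into a single scalar equation of the form $(1 - p^s\varphi_K^r)(x_1) = Y$ in $K$ for an explicit $Y \in K$. So the problem reduces to proving that $T := 1 - p^s\varphi_K^r : K \to K$ is surjective with finite-dimensional $\mathbb{Q}_p$-kernel.

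I would then split into three cases. For $s > 0$, the operator $p^s\varphi_K^r$ sends $W(k)$ into $pW(k)$ and is therefore a $p$-adic contraction, so the geometric series $\sum_{n \ge 0}(p^s\varphi_K^r)^n$ converges $p$-adically to a two-sided inverse of $T$ on $W(k)$; clearing denominators gives surjectivity on $K$, and comparing $p$-adic valuations of $x$ and $p^s\varphi_K^r(x)$ shows that the kernel is trivial. For $s < 0$, write $s = -t$ with $t > 0$ and factor $T = -p^{-t}(\varphi_K^r - p^t)$; using that $\varphi_K$ is bijective on the perfect field $k$, the equation $\varphi_K^r(x) - p^t x = z$ is solvable modulo $p$ and lifts by Hensel-type iteration to $W(k)$, while the valuation argument again forces the kernel to be trivial. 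The decisive case is $s = 0$: here $T = 1 - \varphi_K^r$, which modulo $p$ is the Artin--Schreier map $x \mapsto x - x^{p^r}$ on $k$, surjective because $k$ is algebraically closed; lifting by induction on the $p$-adic filtration and inverting $p$ proves surjectivity on $K$, and the kernel on $K$ is the fixed subfield $W(k)^{\varphi_K^r = 1}[1/p] = \mathbb{Q}_{p^r}$, which is $r$-dimensional over $\mathbb{Q}_p$. The main obstacle is this $s = 0$ case, since it is the only one with nontrivial kernel; here algebraic closedness of $k$ enters essentially, both for Artin--Schreier surjectivity at the residue level and for identifying the kernel with an unramified extension of $\mathbb{Q}_p$.
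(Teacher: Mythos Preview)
Your proof is correct and follows essentially the same route as the paper: twist, apply Dieudonn\'e--Manin to reduce to a single simple $E_{s/r}$, collapse the system to the scalar operator $1 - p^s\varphi_K^r$ on $K$, and finish by a mod-$p$ surjectivity check plus a valuation argument for the kernel. The only organizational difference is that you twist all the way to $i=0$ and then split on the sign of $s$, whereas the paper twists only until $M$ is effective and $i\ge 0$; note in passing that in your $s=0$ case the coprimality hypothesis forces $r=1$, so the kernel you identify as $\mathbb{Q}_{p^r}$ is literally $\mathbb{Q}_p$.
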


\begin{proof}
 By applying a suitable Breuil--Kisin twist, one can assume that $M$ is effective and $i \ge 0.$ By the Dieudonn\'e--Manin classification, one can can further assume that $M = E_{s/r}$ for $s \ge 0, r>0$.

First, we give an argument for the kernel. If $0 \neq x \in E_{s/r}$, then $F(x) = p^i x$ implies that $F^r (x) = p^{ir}(x).$ Using the concrete description from \cref{ex} and writing $x= (x_1, \ldots, x_r),$ we see that one must have $p^s \varphi_K^r(x_i) = p^{ir}x_i$ for all $1 \le i \le r.$ Since $x \ne 0,$ there exists a $j$ such that $x_j \ne 0.$ Since $\varphi_K$ preserves $p$-adic valuations, we must have $s = ir.$ Since $s,r$ are coprime, this forces $r=1$ and $s=i.$ In that case, $F(x) = p^ix$ amounts to $x$ being a fixed point of $\varphi_K,$ which happens if and only if $x \in \mathbb{Q}_p.$ This proves the finite dimensionality of the kernel as a $\mathbb{Q}_p$-vector space.

To prove that the cokernel is zero, we again use the concrete description from \cref{ex}. Using that, we are required to prove that for any $y_i \in K$, there exists $x_i \in K$ such that 
$$(\varphi_K(x_2), \ldots, \varphi_K (x_{r}), p^s\varphi_K(x_1)) - (x_1, \ldots, x_r) = (y_1, \ldots, y_r).$$
Since $\varphi_K$ is an isomorphism, it follows that this is equivalent to solving the equations 

$$x_1 = p^s \varphi_K^r(x_1) - \varphi_K^{r-1}(y_r) - \ldots - \varphi_K(y_2) - y_1$$ and $$\varphi_K(x_{t+1}) - x_t = y_t$$ for $1 \le t \le r-1.$
Thus, it is enough to prove that the map $1 - p^s \varphi^r: W(k) \to W(k)$ is surjective. Since $W(k)$ is $p$-adically complete, it is enough to prove surjectivity modulo $p.$ Thus, for $s >0$, the surjectivity is immediate. For $s =0,$ the surjectivity follows since $k$ is algebraically closed. This finishes the proof.   
\end{proof}{}

\begin{definition}\label{verysad}
     Suppose that $k$ is any perfect field as in \cref{nota}. Let $M$ be a dualizable $F$-gauge over $k$. There is a natural map $\mathrm{Spf}\, W(k) \to (\mathrm{Spec}\, k)^{\mathrm{syn}}$ (see \cite[Rmk.~4.1.2]{fg}). The pullback of $M$ along this map is a perfect complex of $W(k)$-modules that will be denoted as $M^u$; this can be regarded as the underlying module of the $F$-gauge $M$. In this scenario, $M^u$ is naturally equipped with a Frobenius semilinear endomorphism. Further, $M_K:=M^u \otimes_{W(k)} K$ is naturally a dualizable object of $D(K).$ See \cite[\S~3.2]{Mon} for more details.
\end{definition}

\begin{definition}[Syntomic cohomology of $F$-gauges]\label{sad}
 For an $F$-gauge $M$ over $k$, we define $$R\Gamma_{\mathrm{syn}}(M, \mathbb{Z}_p(n)):= R\Gamma ((\mathrm{Spec}\, k)^{\mathrm{syn}}, M \left \{ n \right \}).$$ We refer to $R\Gamma_{\mathrm{syn}}(M, \mathbb{Z}_p(n)) $ as the weight $n$ syntomic cohomology of $M$. We define $R\Gamma_{\mathrm{syn}}(M, \mathbb{Q}_p(n)) := R\Gamma_{\mathrm{syn}}(M, \mathbb{Z}_p(n))\otimes_{\mathbb{Z}_p} \mathbb{Q}_p$.   
\end{definition}{}

\begin{proposition}\label{co1}
Let $M$ be a perfect complex in $(\mathrm{Spec}\, k)^\mathrm{syn},$ where $k$ is algebraically closed. Then $H^i_{\mathrm{syn}}(M, \mathbb{Q}_p(n)) \simeq (H^i (M_K))^{F = p^n}$ for $n \in \mathbb{Z}.$ 
\end{proposition}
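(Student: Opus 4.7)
The plan is to compute the syntomic cohomology after inverting $p$ using the isocrystal description of $\mathrm{Perf}((\mathrm{Spec}\, k)^{\mathrm{syn}})\otimes\mathbb{Q}_p$ from \cref{isostack}, and then extract the cohomology groups from the resulting fiber sequence using \cref{imp}.

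First I would note that by definition $R\Gamma_{\mathrm{syn}}(M,\mathbb{Q}_p(n)) = R\Gamma((\mathrm{Spec}\,k)^{\mathrm{syn}}, M\{n\}) \otimes_{\mathbb{Z}_p}\mathbb{Q}_p$, so by the equivalence $\mathrm{Perf}((\mathrm{Spec}\,k)^{\mathrm{syn}})\otimes\mathbb{Q}_p \simeq \mathrm{Perf}(\mathcal{I}so_K)$ of \cref{isostack} it suffices to compute $R\Gamma(\mathcal{I}so_K, M\{n\}_{\mathbb{Q}_p})$. Since $\mathcal{I}so_K$ is the coequalizer of $\mathrm{id}$ and $\phi_K$ on $\mathrm{Spec}\,K$, or equivalently the quotient stack by the $\mathbb{Z}$-action generated by $\phi_K$, global sections on $\mathcal{I}so_K$ of an object corresponding to a pair $(W,F)$ (a finite complex of $K$-vector spaces with a $\phi_K$-semilinear quasi-automorphism) is computed by the fiber sequence
\begin{equation*}
R\Gamma(\mathcal{I}so_K,(W,F)) \longrightarrow W \xrightarrow{\;F-\mathrm{id}\;} W
\end{equation*}
in $D(\mathbb{Q}_p)$.

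Next I would unwind the Breuil--Kisin twist on the isocrystal side: as noted after \cref{isostack}, tensoring with $\mathcal{O}\{n\}$ replaces $F$ by $F/p^n$ while keeping the underlying $K$-complex equal to $M_K$. Substituting into the fiber sequence and rescaling by the $K$-linear automorphism $p^n$, I obtain
\begin{equation*}
R\Gamma_{\mathrm{syn}}(M,\mathbb{Q}_p(n)) \longrightarrow M_K \xrightarrow{\;F-p^n\;} M_K.
\end{equation*}
The associated long exact sequence reads
\begin{equation*}
\cdots \to H^{i-1}(M_K) \xrightarrow{F-p^n} H^{i-1}(M_K) \to H^i_{\mathrm{syn}}(M,\mathbb{Q}_p(n)) \to H^i(M_K) \xrightarrow{F-p^n} H^i(M_K) \to \cdots
\end{equation*}

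Finally, since $M$ is a perfect complex, each cohomology group $H^i(M_K)$ is a finite-dimensional $K$-vector space with bijective Frobenius-semilinear operator $F$, hence an isocrystal over $k$. Because $k$ is algebraically closed, \cref{imp} tells us that $F-p^n$ is surjective on every such $H^i(M_K)$. The long exact sequence therefore collapses into short exact sequences, and the connecting map identifies $H^i_{\mathrm{syn}}(M,\mathbb{Q}_p(n))$ with $\ker(F-p^n\colon H^i(M_K)\to H^i(M_K)) = (H^i(M_K))^{F=p^n}$, as desired.

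The only delicate step is the identification of $R\Gamma$ on $\mathcal{I}so_K$ with the fiber of $F-\mathrm{id}$; once this is in hand, the result is a formal consequence of \cref{imp}. The substantive input is \cref{imp}, which is why the hypothesis that $k$ is algebraically closed is essential.
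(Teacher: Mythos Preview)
Your proposal is correct and follows essentially the same route as the paper: identify $R\Gamma_{\mathrm{syn}}(M,\mathbb{Q}_p(n))$ with global sections on $\mathcal{I}so_K$ via \cref{isostack}, express this as the fiber of $F - p^n$ on $M_K$, and then use the surjectivity from \cref{imp} to collapse the long exact sequence. The only cosmetic difference is that you first write the fiber of $F-\mathrm{id}$ and then unwind the Breuil--Kisin twist, whereas the paper goes directly to $F_{M_K}-p^n$.
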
{}

\begin{proof}
 Let $M_K$ denote the object of $D(K)$ associated to $M$ by pullback, which is a perfect complex. $M_K$ is naturally equipped with a Frobenius $F_{M_K}$, that induces a Frobenius $F$ on $H^i (M_K)$, which is a bijection. Since $H^i (M_K)$ is finite dimensional as a $K$-vector space, it is naturally an isocrytstal over $k$.

Note that we have a natural functor $$\mathrm{Perf}((\mathrm{Spec}\, k)^\mathrm{syn}) \to \mathrm{Perf}((\mathrm{Spec}\, k)^\mathrm{syn}) \otimes{\mathbb{Q}_p},$$ where the target is naturally a symmetric monoidal $\infty$-category. Concretely, $\mathrm{Perf}((\mathrm{Spec}\, k)^\mathrm{syn}) \otimes{\mathbb{Q}_p}$ is the category of perfect complexes on the stack $\mathcal{I}so_K$ defined in \cref{isostack}. By construction, we have 
$R\Gamma ((\mathrm{Spec}\,k)^\mathrm{syn}, M \left \{n\right \}) \otimes \mathbb{Q}_p \simeq R\Gamma(\mathcal{I}so_K, M \left \{n\right \}).$
Further, $M_K$ identifies with the pullback of $M$ along the map $\mathrm{Spec}\, K \to \mathcal{I}so_K.$ It follows that $$R\Gamma (\mathcal{I}so_K, M \left \{n\right \}) \simeq \mathrm{Fib} (M_K \xrightarrow{F_{M_K} - p^n}  M_K).$$

This gives a fiber sequence
$$R\Gamma_{\mathrm{syn}}(M, \mathbb{Q}_p(n)) \xrightarrow{\textcolor{white}{somethi}} M_K \xrightarrow{F_{M_K} - p^n}  M_K. $$ The claim now follows from the associated long exact sequence and applying \cref{imp}. 
\end{proof}
\begin{corollary}\label{corol}
Let $M$ be a perfect complex in $(\mathrm{Spec}\, k)^\mathrm{syn},$ where $k$ is algebraically closed. Then $H^i_{\mathrm{syn}}(M, \mathbb{Q}_p(n))$ is a finite dimensional $\mathbb{Q}_p$-vector space for all $i \in \mathbb{Z}.$     
\end{corollary}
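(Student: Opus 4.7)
The plan is to deduce this corollary as an essentially immediate consequence of \cref{co1} together with \cref{imp}. The corollary asks for finite $\mathbb{Q}_p$-dimensionality of $H^i_{\mathrm{syn}}(M, \mathbb{Q}_p(n))$, and \cref{co1} identifies this cohomology group with the kernel $(H^i(M_K))^{F=p^n}$, so the problem reduces to finite-dimensionality of the fixed part of an operator on an isocrystal.

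First I would note that since $M$ is a perfect complex on $(\mathrm{Spec}\, k)^{\mathrm{syn}}$, the pullback $M_K$ is a perfect complex in $D(K)$, hence each cohomology group $H^i(M_K)$ is a finite-dimensional $K$-vector space. As observed in the proof of \cref{co1}, the Frobenius $F_{M_K}$ on $M_K$ induces a bijective Frobenius semilinear endomorphism $F$ on $H^i(M_K)$, endowing the latter with the structure of an isocrystal over $k$ in the sense of \cref{isocrystals}. Since $k$ is algebraically closed, we are precisely in the setting where \cref{imp} applies.

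Second, applying \cref{imp} to the isocrystal $H^i(M_K)$ with the specified integer $n$, the kernel of $F - p^n \colon H^i(M_K) \to H^i(M_K)$ is a finite-dimensional $\mathbb{Q}_p$-vector space. Combining this with the identification $H^i_{\mathrm{syn}}(M, \mathbb{Q}_p(n)) \simeq (H^i(M_K))^{F=p^n}$ provided by \cref{co1} gives the desired finite-dimensionality.

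There is no genuine obstacle here: all the work is packaged in the two preceding results, and the corollary is simply the combination. The only thing worth being careful about is that the proof really does apply to every $i \in \mathbb{Z}$ simultaneously, which is automatic because $M_K$ being a perfect complex means $H^i(M_K)$ vanishes outside a bounded range and is finite-dimensional in the range where it does not vanish, so in particular each $H^i(M_K)$ is an honest finite-dimensional isocrystal to which \cref{imp} applies.
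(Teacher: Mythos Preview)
Your proposal is correct and follows exactly the same approach as the paper, which simply says the result follows from \cref{imp} and \cref{co1}. Your write-up merely unpacks this one-line argument in more detail.
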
{}
\begin{proof}
Follows from \cref{imp} and \cref{co1}.    
\end{proof}

\begin{remark}\label{examplebad}
For an $F$-gauge $M$ over an algebraically closed field $k$, the $\mathbb{Z}_p$-module $H^i_{\mathrm{syn}}(M, \mathbb{Z}_p(n))$ is not in general finitely generated. For a supersingular elliptic curve $E$ over an algebraically closed field $k$, the $\mathbb{Z}_p$-module $H^3 _{\mathrm{syn}}(E \times E, \mathbb{Z}_p(1))$ is not finitely generated. As \cref{corol} shows, this issue disappears after inverting $p$.
\end{remark}

\section{Zeta function of $F$-gauges}
In this section, we work with $F$-gauges over finite fields. We start by defining the zeta function associated to such $F$-gauges. We also discuss a descent spectral sequence that relates Galois cohomology and syntomic cohomology (\cref{sad}), and use it to prove a result about syntomic cohomology (\cref{dim0}) and order of vanishing of zeta functions (\cref{orderofvan}). 

\begin{notation}\label{two}
 Let $p$ be a fixed prime. Let $k$ be the finite field $\mathbb{F}_q$, where $q = p^r$. Let $K= W(k)[\frac  1 p]$. For a dualizable $F$-gauge over $k$, the underlying $W(k)$-module $M^u$ is naturally a perfect complex of $W(k)$-modules equipped with a Frobenius. The finite dimension $K$-vector space $H^i (M^u)[\frac 1 p]$ identifies with $H^i (M_K),$ on which the induced Frobenius action $F$ is bijective (see \cref{verysad}). This equips $H^i (M_K)$ with the structure of an isocrystal over $k$ (\cref{isocrystals}).   
\end{notation}

\begin{definition}[Zeta function]\label{zetafu}
 In the set up of \cref{two}, the zeta function of $M$, denoted as $Z(M, t)$, is be defined as follows

$$Z(M,t):= \prod_{i \ge 0} \det (1- t F^r| H^i(M_K))^{(-1)^{i+1}}.$$We set $\zeta (M,s) := Z(M, q^{-s}).$
\end{definition}

Before studying the zeta function further, we first discuss the construction of the descent spectral sequence. Let $M \in \FGauge (k)$ be dualizable. Let $\overline{k}$ be an algebraic closure of $k.$ Let $\mathrm{Pro\acute{e}t}(k)$ denote the pro-\'etale site \cite{proetale} of $k.$ For any $\Spec\,A \in \mathrm{Pro\acute{e}t}(k),$ there is a map $u: (\mathrm{Spec}\, A)^\mathrm{syn} \to (\mathrm{Spec} \, k)^\mathrm{syn}$. The association $A \mapsto R\Gamma((\mathrm{Spec}\, A)^\mathrm{syn}, u^* M \left \{n\right \})$ defines a $D(\mathbb{Z}_p)$-valued sheaf on $\mathrm{Pro\acute{e}t}(k).$ This sheaf, viewed as a functor, $\mathrm{Pro\acute{e}t}(k)^\mathrm{op} \to D(\mathbb{Z}_p)$ will be denoted by $\mathcal{M}\left \{n\right \}.$

Let $\mathrm{Shv}_{D(\mathbb{Z}_p)}(\mathrm{Pro\acute{e}t}(k))$ denote the category of $D(\mathbb{Z}_p)$-valued sheaves on $\mathrm{Pro\acute{e}t}(k).$ By definition, any sheaf $\mathrm{Pro\acute{e}t}(k)^\mathrm{op} \to D(\mathbb{Z}_p)$ preserves limits. For any $\mathcal{F} \in \mathrm{Shv}_{D(\mathbb{Z}_p)}(\mathrm{Pro\acute{e}t}(k)),$ there is a map 
$$ \gamma : \mathcal{F}(\overline{k}) \to \mathcal{F}(\overline{k})$$ induced by the $\sigma^r$, where $\sigma$ is the Frobenius on $\overline{k}.$ Since $k \simeq \mathrm{Fib}(k \xrightarrow{\sigma^r - 1} k)$, by sheafiness, it follows that 
\begin{equation}\label{c}
   \mathcal{F}(k) \simeq \mathrm{Fib}(\mathcal{F}(\overline{k}) \xrightarrow{\gamma - 1} \mathcal{F}(\overline{k}))). 
\end{equation}{}

In terms of continuous cohomology of $G_k$, it can be restated as 
$$R\Gamma_{\mathrm{cont}} (G_k, \mathcal{F}(\overline{k})) \simeq \mathcal{F}(k). $$
Applying this to $\mathcal{M}\left \{n \right \}$, and denoting $u^* M$ by $\overline{M}$, we obtain
$$R\Gamma_{\mathrm{cont}}(G_k, R\Gamma_{\mathrm{syn}}(\overline{M}, \mathbb{Z}_p(n))) \simeq R\Gamma_{\mathrm{syn}}({M}, \mathbb{Z}_p(n)).$$
This gives the desired descent spectral sequence 
\begin{equation}\label{deu}
  E_2^{ij}:= H^i _{\mathrm{cont}}(G_k, H^j_{\mathrm{syn}}(\overline{M}, \mathbb{Z}_p(n))) \implies H^{i+j}_{\mathrm{syn}}(M, \mathbb{Z}_p(n)).   
\end{equation}{}

By \cref{c}, the continuous cohomological dimension of $G_k$ is $1.$ Therefore, the above spectral sequence degenerates and we obtain short exact sequences
\begin{equation}\label{shortexact}
  0 \to H^1 (G_k, H^{j-1}_{\mathrm{syn}}(\overline{M}, \mathbb{Z}_p(n))) \to H^{j}_{\mathrm{syn}}(M, \mathbb{Z}_p(n)) \to H^0 (G_k, H^j_{\mathrm{syn}}(\overline{M}, \mathbb{Z}_p(n))) \to 0 .  
\end{equation}{}

\begin{remark}
Since $M \in \FGauge(k)$ is dualizable and $k$ is a finite field, $H^j_{\mathrm{syn}} (M, \mathbb{Z}_p(n))$ is a finitely generated $\mathbb{Z}_p$-module for all $j$ (cf.~\cite[Prop.~4.5.1]{fg}). As a consequence of \cref{shortexact}, $H^i (G_k, H^q_{\mathrm{syn}}(\overline{M}, \mathbb{Z}_p(n)))$ is also finitely generated as $\mathbb{Z}_p$-modules for all $i.$ But $H^q_{\mathrm{syn}}(\overline{M}, \mathbb{Z}_p(n))$ itself is \emph{not} in general finitely generated (see \cref{examplebad}). Nevertheless, by \cref{corol}, $H^q_{\mathrm{syn}}(\overline{M}, \mathbb{Z}_p(n))[1/p]$ is a finitely generated $\mathbb{Q}_p$-vector space.   
\end{remark}{}

Thus, we have the following lemma.

\begin{lemma}\label{h}
$\mathrm{rank}\, H^0 (G_k, H^q_{\mathrm{syn}}(\overline{M},\mathbb{Z}_p(n))) = \mathrm{rank}\, H^1 (G_k, H^q_{\mathrm{syn}}(\overline{M},\mathbb{Z}_p(n))).$    
\end{lemma}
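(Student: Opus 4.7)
The plan is to exploit the fact that $G_k \cong \widehat{\mathbb{Z}}$ has cohomological dimension $1$ and then reduce the rank equality to a simple rank-nullity argument over $\mathbb{Q}_p$, using the finiteness result \cref{corol} that was already established.

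First I would write $N := H^q_{\mathrm{syn}}(\overline{M},\mathbb{Z}_p(n))$ and recall from \cref{c} (applied after extracting the $j$-th cohomology sheaf, or more directly since $G_k$ is topologically cyclic with generator $\gamma$) that $R\Gamma_{\mathrm{cont}}(G_k, N)$ is computed by the two-term complex
$$
N \xrightarrow{\gamma - 1} N,
$$
so that $H^0(G_k,N) = \ker(\gamma - 1)$ and $H^1(G_k,N) = \operatorname{coker}(\gamma - 1)$, while all higher cohomology vanishes. By the remark preceding the lemma, both of these groups are finitely generated $\mathbb{Z}_p$-modules, so their $\mathbb{Z}_p$-ranks are defined.

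Next, since $\mathbb{Q}_p$ is flat over $\mathbb{Z}_p$, tensoring the above two-term complex with $\mathbb{Q}_p$ computes $H^i(G_k, N) \otimes_{\mathbb{Z}_p} \mathbb{Q}_p$ as the kernel (resp.\ cokernel) of $\gamma - 1$ acting on $V := N[1/p]$. By \cref{corol}, $V$ is a finite-dimensional $\mathbb{Q}_p$-vector space (this is the essential input, and without it the argument would fail because $N$ itself need not be finitely generated, as noted in \cref{examplebad}).

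Finally, for any endomorphism of a finite-dimensional vector space, rank-nullity gives $\dim_{\mathbb{Q}_p}\ker(\gamma - 1) = \dim_{\mathbb{Q}_p}\operatorname{coker}(\gamma - 1)$. Translating back, this gives
$$
\mathrm{rank}\,H^0(G_k, N) = \mathrm{rank}\,H^1(G_k, N),
$$
as desired. There is no real obstacle here once one has \cref{corol} in hand; the only point one must be careful about is that $N$ itself is not finitely generated, so the rank equality must be extracted after inverting $p$ rather than by any structural statement about $N$ as a $\mathbb{Z}_p$-module.
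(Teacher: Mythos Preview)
Your proof is correct and follows essentially the same approach as the paper: invert $p$, invoke \cref{corol} to get a finite-dimensional $\mathbb{Q}_p$-vector space, and apply rank--nullity to the endomorphism $\gamma-1$. The paper's version is more terse but the logic is identical.
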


\begin{proof}
 We have an induced map $H^q_{\mathrm{syn}}(\overline{M}, \mathbb{Z}_p(n))[1/p] \xrightarrow{\gamma -1} H^q_{\mathrm{syn}}(\overline{M}, \mathbb{Z}_p(n))[1/p]$ of finite dimensional $\mathbb{Q}_p$-vector spaces (\cref{corol}). Its kernel and cokernel are respectively given by $H^0 (G_k, H^q_{\mathrm{syn}}(\overline{M},\mathbb{Z}_p(n)))[1/p]$ and $H^1 (G_k, H^q_{\mathrm{syn}}(\overline{M},\mathbb{Z}_p(n)))[1/p]$, and therefore must have the same dimension as $\mathbb{Q}_p$-vector spaces. 
\end{proof}

\begin{proposition}\label{dim0}
Let $M$ be a dualizable $F$-gauge over a finite field $k$. Then $$\sum_{i \ge 0} (-1)^i \mathrm{rank}\, H^i_{\mathrm{syn}} (M, \mathbb{Z}_p(n))=0.$$   
\end{proposition}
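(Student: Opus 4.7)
The plan is to combine the descent short exact sequence \cref{shortexact} with the rank equality provided by \cref{h} and observe that the resulting Euler characteristic telescopes.

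First, I would record the basic finiteness input needed to make the alternating sum well defined. Since $M$ is a dualizable $F$-gauge over the finite field $k$, the complex $R\Gamma_{\mathrm{syn}}(M, \mathbb{Z}_p(n))$ is a perfect complex of $\mathbb{Z}_p$-modules (cf.~\cite[Prop.~4.5.1]{fg}), so $H^j_{\mathrm{syn}}(M, \mathbb{Z}_p(n))$ is finitely generated and vanishes for $|j| \gg 0$. Likewise, $H^j_{\mathrm{syn}}(\overline{M}, \mathbb{Z}_p(n))[1/p]$ is a finite dimensional $\mathbb{Q}_p$-vector space by \cref{corol}, and it also vanishes outside a bounded range in $j$.

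Next, I introduce the abbreviation
$$a_j := \mathrm{rank}\, H^0 \bigl(G_k, H^j_{\mathrm{syn}}(\overline{M}, \mathbb{Z}_p(n))\bigr).$$
By \cref{h}, this also equals $\mathrm{rank}\, H^1 \bigl(G_k, H^j_{\mathrm{syn}}(\overline{M}, \mathbb{Z}_p(n))\bigr)$, and the vanishing noted above forces $a_j = 0$ for all but finitely many $j$. Taking ranks in the short exact sequence \cref{shortexact} (which is exact and whose outer terms are finitely generated $\mathbb{Z}_p$-modules) then yields
$$\mathrm{rank}\, H^j_{\mathrm{syn}}(M, \mathbb{Z}_p(n)) = a_{j-1} + a_j.$$

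Finally, I form the alternating sum and telescope:
$$\sum_{j \ge 0} (-1)^j \mathrm{rank}\, H^j_{\mathrm{syn}}(M, \mathbb{Z}_p(n)) = \sum_{j \ge 0} (-1)^j (a_{j-1} + a_j) = \sum_{j \ge 0} (-1)^j a_j - \sum_{j \ge 0} (-1)^j a_j = 0,$$
where the reindexing $j \mapsto j-1$ in the first term flips the sign, and all sums are finite by the vanishing of $a_j$ for large $|j|$ (with the convention $a_{-1}=0$).

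There is no real obstacle; the only thing to be a bit careful about is ensuring that $a_j$ vanishes at both ends so the telescoping is legitimate, which is precisely what the perfectness of $M$ and \cref{corol} guarantee. Thus the result is a short formal consequence of the descent short exact sequence, the rank equality of \cref{h}, and the finiteness properties already established in the previous section.
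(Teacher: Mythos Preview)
Your proof is correct and follows essentially the same approach as the paper's: the paper's proof is the single line ``Follows from the above lemma and the short exact sequence \cref{shortexact},'' and you have simply spelled out the telescoping computation that this sentence encodes, together with the finiteness inputs that make the alternating sum well defined.
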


\begin{proof}
 Follows from the above lemma and the short exact sequence \cref{shortexact}. \end{proof}

\begin{remark}
Combining a result of Ekedahl \cite[Thm.~5.3]{eke} with \cref{dim0}, one obtains a different proof of \cite[Thm.~0.1(a)]{MRp}. In fact, we obtain a generalization of \cite[Thm.~0.1(a)]{MRp} that does not require the semisimplicity assumption.    
\end{remark}

Now we proceed towards proving \cref{orderofvan}, which gives an expression of order of vanishing of the zeta function associated to $F$-gauges (\cref{zetafu}) in terms of syntomic cohomology. We will begin by fixing some notations and record a lemma.

As before, let $q = p^r$, and consider the finite field $\mathbb{F}_q.$ Let $M \in \mathrm{Isocrys}(\mathbb{F}_q)$. We set $K:= W(\mathbb{F}_q)[1/p]$, and $\overline{K}:= W(\overline{\mathbb F}_q)[1/p].$ We write $M= (V, F),$ where $F$ denotes the semilinear endomorphism of $V$. In this case, $(\overline{V}, \overline{F}):=(V \otimes_K \overline{K}, F \otimes \varphi_{\overline{K}})$ determines an isocrystal $\overline{M}$ over $\overline{\mathbb F}_q$. Note that since $\varphi_K^r = \mathrm{id},$ $F^r : V \to V$ is $K$-linear. Further, $\mathrm{id} \otimes \varphi_{\overline{K}}^r: \overline{V} \to \overline{V}$ is a $K$-linear map, which we denote by $\gamma.$ By \cref{imp}, the $\mathbb{Q}_p$-vector space $\overline{V}^{\overline{F}= p^n}$ is finite dimensional. Since $\overline{F}$ and $\gamma$ commutes, we have an induced $\mathbb{Q}_p$-linear map $\gamma': \overline{V}^{\overline{F}= p^n} \to \overline{V}^{\overline{F}= p^n}.$ In this set up, we have the following lemma.

\begin{lemma}\label{co2}
For any $n \in \mathbb{Z}$, we have an isomorphism $$ V^{F^r=q^n} \simeq ((\overline{V}^{\overline{F}= p^n})^{\gamma' = 1}) \otimes_{\mathbb{Q}_p} K$$ of $K$-vector spaces.   
\end{lemma}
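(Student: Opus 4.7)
The plan is to reduce to the pure slope-$n$ case via Dieudonn\'e--Manin and then do an explicit computation.

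First, I would argue that both sides of the claimed isomorphism are supported on the slope-$n$ part of $\overline{V}$ (resp.~$V$). For the right-hand side this is exactly the valuation argument from the proof of \cref{imp}: for $v\in\overline{V}^{\overline{F}=p^n}$ lying in a summand $E_{s/r'}$ of the Dieudonn\'e--Manin decomposition, iterating the eigenvalue equation and using that $\varphi_{\overline{K}}^{r'}$ preserves $p$-adic valuations forces $s=nr'$, and then $\gcd(s,r')=1$ forces $r'=1$ and $s=n$. For the left-hand side, $F^r$ is $K$-linear on $V$ (because $\varphi_K^r$ is trivial on $K$), and on each slope-$\lambda$ summand $V_\lambda$ every eigenvalue of $F^r$ has $p$-adic valuation $r\lambda$, so the eigenvalue $q^n$ can only appear when $\lambda=n$. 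Hence one may replace $V$ by its pure slope-$n$ part and assume $\overline{V}$ (and $V$) is pure of slope $n$.

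In this pure slope-$n$ setting, Dieudonn\'e--Manin over $\overline{K}$ provides a canonical isomorphism
\[
\overline{K}\otimes_{\mathbb{Q}_p}X\xrightarrow{\sim}\overline{V},\qquad c\otimes x\mapsto c\cdot x,
\]
where $X:=\overline{V}^{\overline{F}=p^n}$; under this isomorphism $\overline{F}$ corresponds to $p^n\varphi_{\overline{K}}\otimes 1$. Because the slope decomposition is canonical, it is preserved by $\gamma$, so $\gamma$ restricts to a $\mathbb{Q}_p$-linear automorphism $\gamma_X:=\gamma|_X$ of $X$. Transporting the formula $\gamma=1\otimes\varphi_{\overline{K}}^r$ from $V\otimes_K\overline{K}$ to $\overline{K}\otimes_{\mathbb{Q}_p}X$ using that $\gamma$ is $\varphi_{\overline{K}}^r$-semilinear yields $\gamma=\varphi_{\overline{K}}^r\otimes\gamma_X$.

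With these identifications the rest is mechanical. Since $\overline{F}^r=q^n\varphi_{\overline{K}}^r\otimes 1$, the equation $\overline{F}^r(c\otimes x)=q^n(c\otimes x)$ forces $\varphi_{\overline{K}}^r(c)=c$, so $\overline{V}^{\overline{F}^r=q^n}=K\otimes_{\mathbb{Q}_p}X$. Since $V=\overline{V}^{\gamma=1}$ and $F^r$ is the restriction of $\overline{F}^r$, one has $V^{F^r=q^n}=(K\otimes_{\mathbb{Q}_p}X)^{\gamma=1}$. Now $\varphi_{\overline{K}}^r$ acts trivially on $K$, so the restricted Galois action on $K\otimes_{\mathbb{Q}_p}X$ is simply $1\otimes\gamma_X$, whose fixed points are $K\otimes_{\mathbb{Q}_p}X^{\gamma_X=1}$. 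Since $(\overline{V}^{\overline{F}=p^n})^{\gamma'=1}=X^{\gamma_X=1}$ by construction of $\gamma_X$, this gives the desired isomorphism $V^{F^r=q^n}\simeq((\overline{V}^{\overline{F}=p^n})^{\gamma'=1})\otimes_{\mathbb{Q}_p}K$.

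The main technical point is in the second paragraph: one must verify that the canonical Dieudonn\'e--Manin trivialization $\overline{V}\simeq\overline{K}\otimes_{\mathbb{Q}_p}X$ is Galois-equivariant in the precise sense that the original action $1\otimes\varphi_{\overline{K}}^r$ on $V\otimes_K\overline{K}$ corresponds to $\varphi_{\overline{K}}^r\otimes\gamma_X$ on the new presentation. Once this compatibility is established, the eigenvalue and fixed-point computations in the third paragraph are short and formal.
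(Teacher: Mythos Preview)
Your proposal is correct and follows essentially the same route as the paper. Both arguments reduce to the pure slope-$n$ summand via Dieudonn\'e--Manin and then identify the Galois action on the trivialized isocrystal; the only cosmetic difference is that you use the canonical basis-free identification $\overline{V}\simeq\overline{K}\otimes_{\mathbb{Q}_p}X$ (with $X=\overline{V}^{\overline{F}=p^n}$) and transport $\gamma$ to $\varphi_{\overline{K}}^r\otimes\gamma_X$, whereas the paper chooses an explicit splitting $W_n\simeq(E_{n/1})^{\oplus t}$ and works in coordinates $K^{\oplus t}$, $\mathbb{Q}_p^{\oplus t}$.
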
{}

\begin{proof}
Since $\overline{F}$ and $\gamma$ commutes, we have $V^{F^r = q^n} \simeq (\overline{V}^{\overline{F}^r= q^n})^{\gamma = 1}$ as $K$-vector spaces. By the Dieudonn\'e--Manin classification, we can write the isocrystal as a finite direct sum$$ \overline{V} \simeq \bigoplus W_j$$ where $W_j$ is a direct sum of copies $E_{u/v}$ such that $u/v =j.$ Using the concrete description of $E_{u/v}$ from \cref{ex} and $p$-adic valuation considerations, one sees that $ W_j^{\overline{F}^r= q^n}=0$ if $j \ne n.$ Similarly, $W_j^{\overline{F}= p^n}=0$ for $j \ne n$. Suppose that $W_n \simeq (E_{n/1})^{\oplus t}$.
Then by a direct calculation we have $W_n^{\overline{F}^r= q^n} \simeq K^{\oplus t}$. The Galois action restricts to a $K$-linear map $\gamma: K^{\oplus t} \to K^{\oplus t}.$ Also, one has $W_n^{\overline{F}= p^n} \simeq \mathbb{Q}_p^{\oplus t}$ and the Galois action restricts to $\mathbb{Q}_p$-linear map $\gamma': \mathbb{Q}_p^{\oplus t} \to \mathbb{Q}_p^{\oplus t}. $
Therefore, it follows that $\gamma$ identifies with $\gamma' \otimes_{\mathbb{Q}_p} K$. Since $(\cdot) \otimes_{\mathbb{Q}_p} K$ is exact, we obtain 
$$(\overline{V}^{\overline{F}^r= q^n})^{\gamma = 1} \simeq ({W_n}^{\overline{F}^r= q^n})^{\gamma = 1} \simeq (K^{\oplus t})^{\gamma =1} \simeq (\mathbb{Q}_p^{\oplus t})^{\gamma'= 1}\otimes_{\mathbb{Q}_p}K \simeq (W_n^{\overline{F}= p^n})^{\gamma' =1} \otimes_{\mathbb{Q}_p} K ,$$ the latter is further isomorphic to $(\overline{V}^{\overline{F}= p^n})^{\gamma'=1}\otimes_{\mathbb{Q}_p}K.$ This finishes the proof.
\end{proof}

\begin{proposition}\label{hungry}
Let $M$ be a dualizable $F$-gauge over $\mathbb{F}_q$, where $q=p^r$. Then $$(H^i (M_K)^{F^r = q^n} \simeq H^i_{\mathrm{syn}}(\overline{M}, \mathbb{Q}_p(n))^{G_{\mathbb{F}_q}} \otimes_{\mathbb{Q}_p}K. $$ Here $K:= W(\mathbb{F}_q)[1/p]$ and $\overline{{M}} \in \FGauge (\overline{\mathbb{F}}_q)$ is the base change of $M$.
\end{proposition}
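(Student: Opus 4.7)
The plan is to obtain this by combining the two previous results \cref{co1} and \cref{co2}, once the notations are lined up. The key observation is that $V:=H^i(M_K)$ is a finite dimensional $K$-vector space equipped with a bijective Frobenius semilinear endomorphism $F$, and the base change $\overline{V}:=V\otimes_K \overline{K}$ equipped with $\overline{F}:=F\otimes \varphi_{\overline{K}}$ is precisely the isocrystal associated to $\overline{M}$; this uses flat base change along $K\to \overline{K}$ together with the fact that $i^*$ commutes with base change along $\mathrm{Spec}\,\overline{\mathbb{F}}_q\to \mathrm{Spec}\,\mathbb{F}_q$. Thus $V$ is exactly the isocrystal to which \cref{co2} applies.

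First I would invoke \cref{co1} applied to the base change $\overline{M}$ to rewrite
\[
H^i_{\mathrm{syn}}(\overline{M},\mathbb{Q}_p(n)) \simeq H^i(\overline{M}_{\overline{K}})^{\overline{F}=p^n} = \overline{V}^{\overline{F}=p^n}.
\]
Next I would identify the induced $G_{\mathbb{F}_q}$-action on the left hand side, obtained from the sheaf-theoretic formulation that was used to produce the descent spectral sequence \cref{deu}, with the $\mathbb{Q}_p$-linear operator $\gamma'$ on $\overline{V}^{\overline{F}=p^n}$ from the paragraph preceding \cref{co2}. Both actions are produced by the same source: the arithmetic Frobenius $\sigma^r$ on $\overline{\mathbb{F}}_q$ (equivalently, its lift $\varphi_{\overline{K}}^r$ acting on $\overline{K}$) acts on $\overline{V}=V\otimes_K \overline{K}$ through the second factor, and commutes with $\overline{F}$ because $F$ is already defined over $K$; restricting this action to the $\overline{F}=p^n$-eigenspace gives exactly $\gamma'$, and this $\gamma'$ coincides with the action of the topological generator of $G_{\mathbb{F}_q}$ arising from \cref{c} applied to the pro-\'etale sheaf $\mathcal{M}\{n\}$. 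Taking $G_{\mathbb{F}_q}$-invariants on $H^i_{\mathrm{syn}}(\overline{M},\mathbb{Q}_p(n))$ therefore coincides with taking $\gamma'=1$ fixed points on $\overline{V}^{\overline{F}=p^n}$.

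Finally, applying \cref{co2} to $V=H^i(M_K)$ and assembling the identifications yields
\[
H^i(M_K)^{F^r=q^n} \simeq \bigl(\overline{V}^{\overline{F}=p^n}\bigr)^{\gamma'=1}\otimes_{\mathbb{Q}_p}K \simeq H^i_{\mathrm{syn}}(\overline{M},\mathbb{Q}_p(n))^{G_{\mathbb{F}_q}}\otimes_{\mathbb{Q}_p}K,
\]
which is the desired conclusion.

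The only nontrivial step is the second one: verifying that the Galois action coming from the sheafy/descent formulation of syntomic cohomology agrees with the semilinear Galois action on the underlying isocrystal after passing to the $\overline{F}=p^n$ eigenspace. This is the main place a reader could get worried, since one action is defined via pro-\'etale descent on the syntomification stack while the other is defined via the Frobenius on $\overline{K}$; however both unwind to the same automorphism of the pair $(\overline{V},\overline{F})$, namely $\mathrm{id}\otimes \varphi_{\overline{K}}^r$, so the compatibility is essentially functorial and follows from the construction of the identification in \cref{co1}.
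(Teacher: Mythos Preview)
Your proposal is correct and follows exactly the paper's approach: the paper's proof is the single line ``Follows from \cref{co1} and \cref{co2},'' and you have simply unpacked this by identifying $H^i(\overline{M}_{\overline{K}})$ with $\overline{V}=H^i(M_K)\otimes_K\overline{K}$ and matching the Galois action with $\gamma'$. The extra care you take in checking the compatibility of the two Galois actions is a useful elaboration, but the overall argument is the same.
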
{}
\begin{proof}
Follows from \cref{co1} and \cref{co2}.   
\end{proof}{}

\begin{proposition}\label{orderofvan}
Let $M$ be a dualizable $F$-gauge over $\mathbb{F}_q$. Let us assume that $q^n$ is a semisimple eigenvalue of $F^r$. Then for any $n \in \mathbb{Z}$, the order of zero of $\zeta(M, s)$ at $s=n$ is given by 
$$\mathrm{ord}_{s=n} \zeta({M}, s)=\sum_{i \ge 0} (-1)^i i \cdot \mathrm{rank}\,H^i _{\mathrm{syn}} (M, \mathbb{Z}_p(n)).$$    
\end{proposition}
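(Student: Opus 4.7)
The plan is to reduce the statement, via the two structural inputs \cref{hungry} and \cref{shortexact}--\cref{h}, to a short algebraic identity on alternating sums. First, I would compute $\mathrm{ord}_{s=n}\zeta(M,s)$ directly from \cref{zetafu}. Because the substitution $t = q^{-s}$ has nonvanishing derivative, the order of vanishing in $s$ at $s = n$ equals the order of vanishing of $Z(M,t)$ in $t$ at $t = q^{-n}$. Each Euler factor $\det(1 - tF^r \mid H^i(M_K))^{(-1)^{i+1}}$ vanishes at $t = q^{-n}$ to order $(-1)^{i+1}$ times the algebraic multiplicity of $q^n$ as an eigenvalue of $F^r$ on $H^i(M_K)$. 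The hypothesis that $q^n$ is a semisimple eigenvalue promotes algebraic to geometric multiplicity, so setting $d_i := \dim_K (H^i(M_K))^{F^r = q^n}$, we obtain
$$\mathrm{ord}_{s=n}\zeta(M,s) = \sum_{i \ge 0}(-1)^{i+1} d_i.$$

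Second, I would invoke \cref{hungry} to rewrite $d_i = \dim_{\mathbb{Q}_p} H^i_{\mathrm{syn}}(\overline{M}, \mathbb{Q}_p(n))^{G_{\mathbb{F}_q}}$; call this common quantity $a_i$. The descent short exact sequence \cref{shortexact}, combined with \cref{h}, then yields
$$\mathrm{rank}\, H^j_{\mathrm{syn}}(M, \mathbb{Z}_p(n)) = a_j + a_{j-1},$$
with the convention $a_{-1} = 0$. Indeed, the $\mathbb{Q}_p$-rank of $H^0(G_k, H^j_{\mathrm{syn}}(\overline{M}, \mathbb{Z}_p(n)))$ equals $a_j$ by definition, and by \cref{h} the rank of $H^1(G_k, H^{j-1}_{\mathrm{syn}}(\overline{M}, \mathbb{Z}_p(n)))$ coincides with that of $H^0(G_k, H^{j-1}_{\mathrm{syn}}(\overline{M}, \mathbb{Z}_p(n)))$, giving the $a_{j-1}$ contribution.

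Finally, substitution reduces the claim to the purely algebraic identity
$$\sum_{i \ge 0}(-1)^i \, i \, (a_i + a_{i-1}) \;=\; \sum_{i \ge 0}(-1)^{i+1} a_i,$$
which one checks by shifting the index $j = i-1$ in the second summand to obtain $\sum_{j \ge 0}(-1)^{j+1}(j+1)a_j$ and then combining with $\sum_i(-1)^i i a_i$; the coefficient of $a_i$ becomes $(-1)^i(i - (i+1)) = (-1)^{i+1}$, as required. I do not anticipate a substantive obstacle: the semisimplicity hypothesis is designed precisely to identify the two notions of multiplicity at the eigenvalue $q^n$, and \cref{hungry} together with the descent short exact sequence translates the zeta-side computation directly into ranks of $H^j_{\mathrm{syn}}(M, \mathbb{Z}_p(n))$. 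The only minor point of care is finiteness of the sums, which follows from dualizability of $M$ (so that $M_K$ is a bounded perfect complex and $H^j_{\mathrm{syn}}(M,\mathbb{Z}_p(n))$ vanishes outside a finite range).
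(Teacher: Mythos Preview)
Your proposal is correct and follows essentially the same route as the paper: compute the order of vanishing as $\sum_i(-1)^{i+1}\dim_K(H^i(M_K))^{F^r=q^n}$ using semisimplicity, convert via \cref{hungry} to ranks of Galois invariants, and then use \cref{shortexact} together with \cref{h} to rewrite in terms of $\mathrm{rank}\,H^j_{\mathrm{syn}}(M,\mathbb{Z}_p(n))$. Your write-up is slightly more explicit about the final index-shift identity and about why the sums are finite, but there is no substantive difference from the paper's argument.
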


\begin{proof}
 Let $P_i (t):= \mathrm{det}(1- t F^r| H^i(M_K)).$ Then the order of zero of $P_i$ at $q^{-n}$ is the same as the order of zero of $q^n$ of the characteristic polynomial of $F^r$ on $H^i( M_K)$. By the semisimplicity assumption, this is the same as $\dim_K(H^i (M_K))^{F^r = q^n}$. The latter is equal to $\mathrm{rank} H^i_{\mathrm{syn}} (\overline{M}, \mathbb{Z}_p(n))^{G_{\mathbb{F}_q}}$ by \cref{hungry}. Let us denote $u_i:= \mathrm{rank}_{\mathrm{syn}}H^i (\overline{M}, \mathbb{Z}_p(n))^{G_{\mathbb{F}_q}}.$ By \cref{shortexact} and \cref{h}, it follows that 
$$ \mathrm{rank} H^i_{\mathrm{syn}}(M, \mathbb{Z}_p(n))= u_{i-1}+ u_i.$$
Thus $\sum_{i \ge 0} (-1)^i i \cdot \mathrm{rank} H^i _{\mathrm{syn}} (M, \mathbb{Z}_p(n)) = \sum_{i \ge 0} (-1)^{i+1} u_i$; the latter is clearly equal to the order of the zero of $\zeta(M, s)$ at $s=n$.
\end{proof}

\begin{remark}
 Combining \cite[Thm.~5.3]{eke} with \cref{dim0}, one recovers \cite[Thm.~0.1(b)]{MRp}. Although we impose certain semisimplicity assumption in \cref{orderofvan}, our main result (\cref{mainthm}) regarding special values will have no such assumptions. In order to achieve that, we need some new notions that we will develop in the subsequent sections. 
\end{remark}{}

\section{Bockstein complexes}\label{bockst}
In this section, we construct a generalization of the Bockstein map, as well as the Bockstein spectral sequence by introducing certain Bockstein complexes. We study general properties of Euler characteristic (when it exists) of certain Bockstein complexes. The constructions and the results from this section are used in \cref{stablebock} to define the stable Bockstein characteristic, which will be used later in the context of special values of zeta functions.

\begin{construction}\label{bock}
 Let $M \in D^b (\mathbb{Z}_p).$ Let $\theta: M \to M$ be a map. Let $R:= \mathrm{fib}(\theta).$ Note that there is a natural composite map $$\mathrm{fib}(\theta) \to M \xrightarrow{1} M \to \mathrm{cofib}(\theta).$$ Since $\mathrm{fib}(\theta)[1] \simeq \mathrm{cofib}(\theta)$, we obtain a map $\beta: R \to R[1].$ Taking cohomology, we obtain a complex (where $H^0 (R)$ is in degree $0$) 
$$\mathrm{Bock}^\bullet(M, \theta):=\ldots \to H^{i-1}(R) \to H^i (R ) \to H^{i+1} (R) \to \ldots .$$
\end{construction}

\begin{definition}
 For a $P \in D^b (\mathbb{Z}_p)$ such that $H^i (P)$ is a finite length $\mathbb{Z}_p$-module for all, we define the length Euler characteristic of $P$ to be $\sum (-1)^i \mathrm{length}_{\mathbb{Z}_p} H^i (P)$. We denote it by $\chi^l (P).$   
\end{definition}{}

\begin{definition}[Bockstein characteristic]

The Bockstein characteristic of $(M, \theta)$ is defined to be $\chi^l (\mathrm{Bock}^\bullet(M, \theta)),$ when it exists. In this situation, we will also simply denote it by $\chi^{l, \mathrm{B}}(R),$ where $R:= \mathrm{fib}(\theta).$  
\end{definition}

\begin{remark}
 Note that in the set up of the above definition, if $\chi^l (R)$ is defined (where $R = \mathrm{fib}(\theta)$), then it is the same as the Bockstein characteristic of $(M, \theta).$ However, a crucial point here is that $\chi^{l,B} (R)$ can be defined even if $\chi^l (R)$ is not defined.   
\end{remark}

We now explain how to construct the Bockstein spectral sequence in this generality; the complex $\mathrm{Bock}^\bullet (M, \theta)$ can be seen in the $E_1$-page of this spectral sequence.

\begin{construction}
 Let $\widehat{M}:= R\varprojlim \mathrm{cofib}(M \xrightarrow{\theta^n}M)$. Then $(\ldots \widehat{M} \xrightarrow{\theta} \widehat{M} \xrightarrow{\theta} \widehat{M} \xrightarrow{\theta} \ldots)$ defines a (complete) decreasing $\mathbb{Z}$-indexed filtration on $\widehat{M}$ whose graded pieces are all isomorphic to $\mathrm{cofib}(\theta) \simeq R[1].$ Up to changing the indexing and using [Lurie,\S 11 ], one obtains a spectral sequence whose $E_1$-page identifies with $\mathrm{Bock}^\bullet (M, \theta).$
\end{construction}
\begin{construction}\label{breka}
By viewing $M^{\theta,*}:=(\ldots \widehat{M} \xrightarrow{\theta} \widehat{M} \xrightarrow{\theta} \widehat{M} \xrightarrow{\theta} \ldots)$ as a decreasing $\mathbb{Z}$-indexed filtration, one can also view it as an object of the \emph{filtered derived category}. Note that the filtered derived category is equipped with the Beilinson $t$-structure, and one can take cohomology objects $H^n_{\mathrm{B}} (\cdot)$ with respect to this $t$-structure, which are certain cochain complexes. By construction, it follows that 
$H^0_{\mathrm{B}}(M^{\theta,*}) \simeq \mathrm{Bock}^{\bullet}(M, \theta)[1]$ as complexes. More generally we have 
$$H^n_{\mathrm{B}}(M^{\theta,*}) \simeq \mathrm{Bock}^{\bullet}(M, \theta)[n+1].$$
\end{construction}

\begin{example}\label{e}
 Let $M$ be a discrete $\mathbb{Z}_p$-module and let $\theta: M \to M$ be an endomorphism. We will use $M^\theta$ to denote $\mathrm{Ker}(\theta)$ and $M_\theta$ to denote $\mathrm{Coker}(\theta)$. Then $\mathrm{Bock}^\bullet (M, \theta)$ concretely identifies with the two term complex $[M^\theta \to M_{\theta}]$, where $M^\theta$ is in degree $0.$ The map here is given as the composite $M^\theta \to M \to M_\theta$ of the natural maps. The Bockstein characteristic of $(M,\theta)$ exists if and only if the map $M^\theta \to M_{\theta}$ has kernel and cokernel of finite length. 
\end{example}{}

\begin{remark}\label{semisimple}
In \cref{e}, suppose that $M^\theta$ and $M_\theta$ are finitely generated. Then the Bockstein characteristic of $(M,\theta)$ exists if and only if $M^\theta [\frac{1}{p}] \to M_{\theta}[\frac{1}{p}]$ is an isomorphism. Further, if we additionally assume that $M[\frac 1 p]$ is a finite dimensional $\mathbb{Q}_p$-vector space, then it follows that the Bockstein characteristic of $(M,\theta)$ exists if and only if $0$ is a semisimple eigenvalue of $\theta [\frac 1 p].$  
\end{remark}{}

\begin{remark}\label{to}
 Suppose that $M \in D^b(\mathbb{Z}_p)$ and we are in the set up of \cref{bock}. By abuse of notation, we will denote the endomorphism induced on $H^i (M) \to H^i (M)$ by $\theta$ again. This allows us to form the Bockstein complex $\mathrm{Bock}^\bullet (H^i (M), \theta)$. This complex fits in a diagram of the form
\begin{center}
\begin{tikzcd}
0           & H^{i-1}(M)^\theta \arrow[d] \arrow[l] & H^{i-1}(R) \arrow[d] \arrow[l] &                                   &             \\
0 \arrow[r] & H^{i-1}(M)_\theta \arrow[r]           & H^{i}(R) \arrow[d] \arrow[r]   & H^i(M)^\theta \arrow[d] \arrow[r] & 0           \\
            &                                       & H^{i+1}(R)                     & H^i(M)_\theta \arrow[l]           & 0 \arrow[l]
\end{tikzcd}  
\end{center}{}
 By a diagram chase, we obtain a short exact sequence that calculates cohomology of $\mathrm{Bock}^\bullet(M, \theta)$ as
$$0 \to H^1 (\mathrm{Bock}^\bullet(H^{i-1}(M), \theta)) \to H^i (\mathrm{Bock}^\bullet(M, \theta)) \to H^0 (\mathrm{Bock}^\bullet(H^i(M), \theta)) \to 0.$$
\end{remark}{}

\begin{corollary}\label{alternating}
Suppose that $M \in D^b(\mathbb{Z}_p)$ and we are in the set up of \cref{bock}. By \cref{to}, it follows that the Bockstein characteristic of $(M, \theta)$ exists if and only if the Bockstein characteristic of $(H^i (M), \theta)$ exists for all $i.$ In such a situation, it follows that 
$$\chi^l (\mathrm{Bock}^\bullet(M, \theta)) = \sum_{i} (-1)^i \chi^l (\mathrm{Bock}^\bullet(H^i(M), \theta)).$$
\end{corollary}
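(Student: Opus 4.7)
The plan is to reduce the corollary to the short exact sequence from \cref{to}, namely
$$0 \to H^1(\mathrm{Bock}^\bullet(H^{i-1}(M),\theta)) \to H^i(\mathrm{Bock}^\bullet(M,\theta)) \to H^0(\mathrm{Bock}^\bullet(H^i(M),\theta)) \to 0,$$
together with additivity of length under short exact sequences of $\mathbb{Z}_p$-modules. Since $M \in D^b(\mathbb{Z}_p)$, only finitely many $H^i(M)$ are nonzero, so all sums that appear below are finite and no convergence issue arises.

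For the equivalence concerning existence of the Bockstein characteristic, recall that a $\mathbb{Z}_p$-module sitting in a short exact sequence has finite length if and only if both its sub and quotient do. Applied to the sequence above this shows that, for each $i$, the module $H^i(\mathrm{Bock}^\bullet(M,\theta))$ has finite length if and only if both $H^0(\mathrm{Bock}^\bullet(H^i(M),\theta))$ and $H^1(\mathrm{Bock}^\bullet(H^{i-1}(M),\theta))$ do. By \cref{e}, the Bockstein complex of a discrete module is concentrated in degrees $0$ and $1$, so these two groups are the only cohomology groups contributing to the respective Bockstein characteristics, and letting $i$ vary gives the claimed equivalence.

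For the numerical identity, take the alternating sum $\sum_i (-1)^i \ell(\cdot)$ over the short exact sequence:
$$\chi^l(\mathrm{Bock}^\bullet(M,\theta)) = \sum_i (-1)^i \ell\bigl(H^1(\mathrm{Bock}^\bullet(H^{i-1}(M),\theta))\bigr) + \sum_i (-1)^i \ell\bigl(H^0(\mathrm{Bock}^\bullet(H^i(M),\theta))\bigr).$$
Reindexing the first sum by $i \mapsto i+1$ introduces an overall sign, giving
$$\sum_i (-1)^i\bigl[\ell(H^0(\mathrm{Bock}^\bullet(H^i(M),\theta))) - \ell(H^1(\mathrm{Bock}^\bullet(H^i(M),\theta)))\bigr],$$
and by \cref{e} each bracket equals $\chi^l(\mathrm{Bock}^\bullet(H^i(M),\theta))$. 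There is no substantive obstacle here; the only care required is to track the index shift correctly and to invoke \cref{e} so that only degrees $0$ and $1$ contribute to each individual characteristic.
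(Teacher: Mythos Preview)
Your argument is correct and is exactly the approach the paper has in mind: the corollary is stated as an immediate consequence of the short exact sequence in \cref{to}, and you have simply written out the details (boundedness, additivity of length, the index shift) that the paper leaves implicit.
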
{}

We must point out that the existence of the Bockstein characteristic is a subtle matter (see \cref{semisimple}). In particular, it is \emph{not} stable under taking extensions. For example, let us look at the following diagram
\begin{center}
\begin{tikzcd}
0 \arrow[r] & \mathbb{Z}_p \arrow[d, "\theta_1"] \arrow[r] & \mathbb{Z}_p \oplus \mathbb{Z}_p \arrow[r] \arrow[d, "\theta_2"] & \mathbb{Z}_p \arrow[d, "\theta_3"] \arrow[r] & 0 \\
0 \arrow[r] & \mathbb{Z}_p \arrow[r]                       & \mathbb{Z}_p \oplus \mathbb{Z}_p \arrow[r]                       & \mathbb{Z}_p \arrow[r]                       & 0
\end{tikzcd} 
\end{center}{}
where $\theta_1, \theta_3$ are the zero map and $\theta_2$ is defined by $(c,d)\mapsto (d,0)$. Even though the Bockstein characteristics of $(\mathbb{Z}_p, \theta_1), (\mathbb{Z}_p, \theta_3)$ exist, the Bockstein characteristic of $(\mathbb{Z}_p \oplus \mathbb{Z}_p, \theta_2)$ does not exist.

The above example also shows that the maps $$\mathrm{Bock}^\bullet (\mathbb{Z}_p, \theta_1) \to \mathrm{Bock}^\bullet (\mathbb{Z}_p \oplus \mathbb{Z}_p, \theta_2) \to \mathrm{Bock}^\bullet (\mathbb{Z}_p, \theta_3)$$ do not form a fiber sequence.

\begin{lemma}\label{lemm}
Let $M',M, M''$ be discrete $\mathbb{Z}_p$-modules. Suppose that we have a diagram  
\begin{center}
\begin{tikzcd}
0 \arrow[r] & M' \arrow[r] \arrow[d, "\theta_1"] & M \arrow[r] \arrow[d, "\theta_2"] & M'' \arrow[r] \arrow[d, "\theta_3"] & 0 \\
0 \arrow[r] & M' \arrow[r]                       & M \arrow[r]                       & M'' \arrow[r]                       & 0
\end{tikzcd}   
\end{center}where the rows are exact. Assume that $M'^{\theta_1}, M''^{\theta_3}, M'_{\theta_1}, M''_{\theta_3}$ are finitely generated $\mathbb{Z}_p$-modules, and the Bockstein characteristics of $(M', \theta_1), (M, \theta_2), (M'', \theta_3)$ exist. Then 
$$\chi^l (\mathrm{Bock}^\bullet(M, \theta_2))= \chi^l (\mathrm{Bock}^\bullet(M', \theta_1)) + \chi^l (\mathrm{Bock}^\bullet(M'', \theta_3)).$$
\end{lemma}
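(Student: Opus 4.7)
The plan is to combine the snake lemma applied to the given diagram with additivity of length Euler characteristic over short exact sequences of two-term complexes.

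First, the snake lemma yields the six-term exact sequence
$$0 \to M'^{\theta_1} \to M^{\theta_2} \to M''^{\theta_3} \xrightarrow{\delta} M'_{\theta_1} \xrightarrow{c_1} M_{\theta_2} \to M''_{\theta_3} \to 0.$$
Set $K := \ker\delta$ and $L := \mathrm{im}\,\delta \subseteq M'_{\theta_1}$. The hypothesis that all three Bockstein characteristics exist forces each Bockstein map $\beta_i$ to become an isomorphism after inverting $p$ (its kernel and cokernel being of finite length, hence torsion). Applying this to the commutative square formed by $M'^{\theta_1} \hookrightarrow M^{\theta_2}$, $c_1$, and the vertical maps $\beta_1, \beta_2$, the vertical maps become isomorphisms after inverting $p$, which forces $c_1[1/p]$ to be injective. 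Hence $L[1/p] = 0$, and since $L$ is a submodule of the finitely generated $M'_{\theta_1}$, it has finite length.

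Writing $B' := \mathrm{Bock}^\bullet(M', \theta_1)$, $B := \mathrm{Bock}^\bullet(M, \theta_2)$, $B'' := \mathrm{Bock}^\bullet(M'', \theta_3)$ as two-term complexes per \cref{e}, I would introduce two auxiliary two-term complexes $\tilde B' := [M'^{\theta_1} \to M'_{\theta_1}/L]$ (with differential given by the composite of $\beta_1$ with the projection) and $\tilde B'' := [K \to M''_{\theta_3}]$ (with differential given by the restriction of $\beta_3$ to $K$). Using the snake lemma identifications $M^{\theta_2}/M'^{\theta_1} \cong K$ and $M_{\theta_2}/(M'_{\theta_1}/L) \cong M''_{\theta_3}$, one verifies three short exact sequences of two-term complexes:
$$0 \to [0 \to L] \to B' \to \tilde B' \to 0, \quad 0 \to \tilde B' \to B \to \tilde B'' \to 0, \quad 0 \to \tilde B'' \to B'' \to [L \to 0] \to 0.$$

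Since $L$ has finite length, the length Euler characteristics of $\tilde B'$ and $\tilde B''$ are well-defined: their kernels and cokernels fit in extensions whose outer terms are the finite-length $\ker/\mathrm{coker}$ of $\beta_1, \beta_3$ and finite-length subquotients of $L$. Applying additivity of $\chi^l$ over the long exact sequences in cohomology of the three short exact sequences above, and noting that $\chi^l([0 \to L]) + \chi^l([L \to 0]) = -\mathrm{length}(L) + \mathrm{length}(L) = 0$, one concludes $\chi^l(B) = \chi^l(B') + \chi^l(B'')$, as desired. The main technical obstacle is the verification of the three short exact sequences of complexes, which requires careful bookkeeping of the commutative squares coming from the snake lemma; a secondary but essential ingredient is the finite length of $L$, which is not a priori obvious but is an automatic consequence of all three Bockstein characteristics existing simultaneously.
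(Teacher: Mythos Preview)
Your proof is correct and follows essentially the same approach as the paper's. Your connecting map $\delta$ is the paper's $s$, your $L$ is the paper's $\mathrm{Im}(s)$, and your auxiliary complexes $\tilde B'$, $\tilde B''$ are exactly the paper's $S_2$, $S_1$; your three short exact sequences of two-term complexes package the same diagrams the paper writes out explicitly, and the cancellation of $\pm\mathrm{length}(L)$ is identical in both arguments.
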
{}

\begin{proof}
Note that by the snake lemma, there is a map $s: M''^{\theta_3} \to M'_{\theta_1} $. We also have a diagram (with exact rows)
\begin{center}
\begin{tikzcd}
0 \arrow[r] & M'^{\theta_1} \arrow[d] \arrow[r] & M^{\theta_2} \arrow[r] \arrow[d] & M''^{\theta_3} \arrow[d] &   \\
            & M'_{\theta_1} \arrow[r]           & M_{\theta_2} \arrow[r]           & M''_{\theta_3} \arrow[r] & 0
\end{tikzcd}  
\end{center}{}
such that after inverting $p$, by our hypothesis, the vertical maps induce isomorphisms. This implies that $s[\frac{1}{p}]$ is the zero map. Therefore, $\mathrm{Im}(s)$ is a finite length $\mathbb{Z}_p$-module.

 By the snake lemma, we obtain a diagram (with exact rows)
\begin{center}
\begin{tikzcd}
0 \arrow[r] & M'^{\theta_1} \arrow[d] \arrow[r] & M^{\theta_2} \arrow[r] \arrow[d] & \mathrm{Ker}(s) \arrow[d] \arrow[r] & 0 \\
0 \arrow[r] & \mathrm{Coker}(s) \arrow[r]       & M_{\theta_2} \arrow[r]           & M''_{\theta_3} \arrow[r]            & 0
\end{tikzcd}
\end{center}
It follows that the kernel and cokernel of the maps $\mathrm{Ker}(s) \to M''_{\theta_3}$ and $M'^{\theta_1} \to \mathrm{Coker}(s)$ are both finite length. We may view them as two term complexes with the source of the maps being in degree $0$ and denote them by $S_1, S_2$ respectively. Thus,
$$\chi^{l}(\mathrm{Bock}^\bullet(M, \theta_2)) = \chi^l(S_1) + \chi^l(S_2).$$
Finally, we have the following two diagrams (with exact rows)
\begin{center}
    \begin{tikzcd}
0 \arrow[r] & \mathrm{Ker}(s) \arrow[r] \arrow[d]     & M''^{\theta_3} \arrow[r] \arrow[d] & \mathrm{Im}(s) \arrow[r] \arrow[d] & 0 \\
0 \arrow[r] & M''_{\theta_3} \arrow[r, "\mathrm{id}"] & M''_{\theta_3} \arrow[r]           & 0                                  &  
\end{tikzcd}

\begin{tikzcd}
            & 0 \arrow[r] \arrow[d]    & M'^{\theta_1} \arrow[r] \arrow[d] & M'^{\theta_1} \arrow[r] \arrow[d] & 0 \\
0 \arrow[r] & \mathrm{Im}(s) \arrow[r] & M'_{\theta_1} \arrow[r]           & \mathrm{Coker}(s) \arrow[r]       & 0
\end{tikzcd}
\end{center}which implies that $\chi^l(\mathrm{Bock}^\bullet (M'', \theta_3)) = \chi^l(S_1) + \mathrm{length}(\mathrm{Im}(s))$ and $\chi^l(\mathrm{Bock}^\bullet (M', \theta_1)) = \chi^l(S_2) - \mathrm{length}(\mathrm{Im}(s))$. Combining the three equalities together gives the desired result.
\end{proof}{}

\begin{proposition}\label{be}
Let $(M', \theta_1), (M, \theta_2)$ and $(M'', \theta_3)$ be such that we have a diagram
\begin{center}
\begin{tikzcd}
M' \arrow[r] \arrow[d, "\theta_1"'] & M \arrow[r] \arrow[d, "\theta_2"'] & M'' \arrow[d, "\theta_3"'] \\
M' \arrow[r]                       & M \arrow[r]                      & M''                       
\end{tikzcd}
\end{center}{}
 where $M'\to M \to M''$ is a fiber sequence in $D^b(\mathbb{Z}_p).$ Further, assume that $\mathrm{fib}(\theta_1), \mathrm{fib}(\theta_3)$ are perfect complexes and $\chi^l(\mathrm{Bock}^{\bullet}(M', \theta_1)), \chi^l(\mathrm{Bock}^{\bullet}(M, \theta_2)), \chi^l(\mathrm{Bock}^{\bullet}(M'', \theta_3))$ exist. Then we have
$$\chi^l(\mathrm{Bock}^{\bullet}(M, \theta_2)) = \chi^l(\mathrm{Bock}^{\bullet}(M', \theta_1)) + \chi^l(\mathrm{Bock}^{\bullet}(M'', \theta_3)). $$

\end{proposition}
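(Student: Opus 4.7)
The plan is to devissage via cohomological degree using \cref{alternating}, and then to exploit the long exact sequence in cohomology associated with the fiber sequence $M' \to M \to M''$. Taking fibers of the three endomorphisms yields a fiber sequence $\mathrm{fib}(\theta_1) \to \mathrm{fib}(\theta_2) \to \mathrm{fib}(\theta_3)$ in $D^b(\mathbb{Z}_p)$, so the hypothesis that the outer terms are perfect forces $\mathrm{fib}(\theta_2)$ to be perfect as well. For each $i$, the long exact sequence associated with $\mathrm{fib}(\theta_j) \to (\cdot) \xrightarrow{\theta_j} (\cdot)$ then implies that the $\theta$-invariants $H^i(-)^{\theta}$ and $\theta$-coinvariants $H^i(-)_{\theta}$ of each of $M', M, M''$ are finitely generated $\mathbb{Z}_p$-modules.

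By \cref{alternating}, it suffices to verify the identity at the level of cohomology, namely that
\begin{align*}
\sum_{i} (-1)^{i} \bigl[ \chi^l(\mathrm{Bock}^\bullet(H^i(M'),\theta_1)) + \chi^l(\mathrm{Bock}^\bullet(H^i(M''),\theta_3)) - \chi^l(\mathrm{Bock}^\bullet(H^i(M),\theta_2)) \bigr] = 0.
\end{align*}
The long exact sequence of cohomology of the fiber sequence, compatible with the $\theta$'s, splits in each degree $i$ into three short exact sequences
\begin{align*}
0 \to A_i \to H^i(M') \to I_i \to 0, \quad 0 \to I_i \to H^i(M) \to B_i \to 0, \quad 0 \to B_i \to H^i(M'') \to A_{i+1} \to 0,
\end{align*}
where $A_i := \ker(H^i(M') \to H^i(M))$, $I_i := \mathrm{Im}(H^i(M') \to H^i(M))$, and $B_i := \mathrm{Im}(H^i(M) \to H^i(M''))$. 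Applying \cref{lemm} to each sequence and combining via (first) $+$ (third) $-$ (second) cancels the $I_i$ and $B_i$ contributions, producing
\begin{multline*}
\chi^l(\mathrm{Bock}^\bullet(H^i(M'),\theta_1)) + \chi^l(\mathrm{Bock}^\bullet(H^i(M''),\theta_3)) - \chi^l(\mathrm{Bock}^\bullet(H^i(M),\theta_2)) \\ = \chi^l(\mathrm{Bock}^\bullet(A_i,\theta)) + \chi^l(\mathrm{Bock}^\bullet(A_{i+1},\theta)).
\end{multline*}
Multiplying by $(-1)^i$ and summing over $i$ (only finitely many $A_i$'s are nonzero since $M', M, M'' \in D^b(\mathbb{Z}_p)$), the $A$-terms telescope to zero, yielding the required identity.

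The principal obstacle is verifying the hypotheses of \cref{lemm} for the intermediate subquotients $A_i, I_i, B_i$. The needed finite generation of $\theta$-invariants and coinvariants is immediate, since each intermediate object is a sub- or quotient-module of one of the already finitely generated ambient modules $H^i(M')^{\theta_1}, H^i(M)^{\theta_2}, H^i(M'')^{\theta_3}$ (or their coinvariant analogues), compatibly with the $\theta$-action. The more delicate point is existence of the intermediate Bockstein characteristics: by the criterion in \cref{semisimple}, for $(N, \theta)$ with $N^\theta, N_\theta$ finitely generated, existence is equivalent to the natural map $N^\theta[1/p] \to N_\theta[1/p]$ being an isomorphism; equivalently, to the joint conditions $\ker\theta = \ker\theta^2$ and $\mathrm{Im}\,\theta = \mathrm{Im}\,\theta^2$ on $N \otimes \mathbb{Q}_p$. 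These conditions pass to $\theta$-stable subquotients, so the Bockstein characteristics of $A_i, I_i, B_i$ all exist, and \cref{lemm} applies.
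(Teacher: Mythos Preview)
Your overall strategy is the same as the paper's: reduce to the level of cohomology via \cref{alternating}, chop the long exact sequence into short exact sequences, apply \cref{lemm} to each, and telescope. The paper uses slightly different bookkeeping (tracking kernels and cokernels of the maps $g_i, f_i, b_i$ in the long exact sequence rather than your $A_i, I_i, B_i$), but the structure and the telescoping identity are identical.

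There is, however, a genuine gap in your final paragraph. The assertion that the conditions $\ker\theta = \ker\theta^2$ and $\mathrm{Im}\,\theta = \mathrm{Im}\,\theta^2$ ``pass to $\theta$-stable subquotients'' is false in the generality needed. Note that \cref{be} does \emph{not} assume that $H^i(M')[1/p]$, $H^i(M)[1/p]$, $H^i(M'')[1/p]$ are finite-dimensional over $\mathbb{Q}_p$ (that hypothesis only enters later, in \cref{stabock}), and in the infinite-dimensional setting the claim fails. For instance, let $V = \bigoplus_{n \in \mathbb{Z}} \mathbb{Q}_p e_n$ with $\theta(e_n) = e_{n+1}$; then $\theta$ is bijective, so both conditions hold trivially on $V$, but for the $\theta$-stable submodule $W = \bigoplus_{n \ge 0} \mathbb{Q}_p e_n$ one has $\mathrm{Im}(\theta|_W) = \bigoplus_{n \ge 1} \mathbb{Q}_p e_n \neq \bigoplus_{n \ge 2} \mathbb{Q}_p e_n = \mathrm{Im}(\theta^2|_W)$. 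What \emph{is} true is that $\ker\theta = \ker\theta^2$ passes to $\theta$-stable submodules and $\mathrm{Im}\,\theta = \mathrm{Im}\,\theta^2$ passes to $\theta$-stable quotients; neither passes in the other direction.

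The repair is already implicit in your setup: by exactness of the long sequence, each of $A_i, I_i, B_i$ is \emph{simultaneously} a submodule of one of the $H^\bullet(\cdot)$'s and a quotient of a different one (e.g.\ $I_i$ is a quotient of $H^i(M')$ and a submodule of $H^i(M)$; $A_i$ is a quotient of $H^{i-1}(M'')$ and a submodule of $H^i(M')$). One therefore inherits the kernel condition from the submodule realization and the image condition from the quotient realization. This is exactly what the paper does, phrased in terms of the map $(\cdot)^\theta[1/p] \to (\cdot)_\theta[1/p]$: one commutative square (coming from the inclusion into the larger module) gives injectivity, and a second square (coming from the surjection from the other module) gives surjectivity.
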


\begin{proof}By hypothesis, it follows that $\mathrm{fib}(\theta_3)$ is also a perfect complex. By the long exact sequence from \cref{to}, we see that $H^i(M')^{\theta_1}, H^i(M')_{\theta_1}, H^i(M)^{\theta_2}, H^i(M)_{\theta_2},$ $H^i(M'')^{\theta_3}, H^i(M'')_{\theta_3}$ are finitely generated $\mathbb{Z}_p$-modules. We have a long exact sequence
$$H^i (M') \xrightarrow{g_i} H^i(M) \xrightarrow{f_i} H^i (M'') \xrightarrow{b_i}H^{i+1}(M') \to \ldots $$ that commutes with the maps induced by the $\theta_i$s. It follows that (co)kernel of the maps $f_i, g_i, b_i$ are all equipped with an operator that we will simply denote as $\theta.$ Note that $\mathrm{Ker}(b_i)^\theta$ is finitely generated, as it embeds in $H^i(M'')^{\theta_3}$; further, $\mathrm{Ker}(b_i)_\theta$ is also finitely generated, since it receives a surjection from $H^{i}(M)_{\theta_2}.$ Similarly, one obtains an analogous claim for kernel of the maps $f_i, g_i$, as well as the cokernels. Note that we have a diagram
\begin{center}
\begin{tikzcd}
\mathrm{Ker}(b_i)^\theta \arrow[d] \arrow[r] & H^i(M'')^{\theta_3} \arrow[d] \\
\mathrm{Ker}(b_i)_\theta \arrow[r]           & H^i(M'')_{\theta_3}          
\end{tikzcd}
\end{center}where the top horizontal arrow is injective. By inverting $p$ and using our hypothesis, we obtain that $\mathrm{Ker}(b_i)^\theta [\frac 1 p] \to \mathrm{Ker}(b_i)_\theta [\frac 1 p]$ is injective. Similarly, we have a diagram
\begin{center}
\begin{tikzcd}
H^i(M)^{\theta_2} \arrow[d] \arrow[r] & \mathrm{Ker}(b_i)^\theta \arrow[d] \\
H^i(M)_{\theta_2} \arrow[r]           & \mathrm{Ker}(b_i)_\theta          
\end{tikzcd}
\end{center}which gives the surjectivity of $\mathrm{Ker}(b_i)^\theta [\frac 1 p] \to \mathrm{Ker}(b_i)_\theta [\frac 1 p]$. Thus it follows that the Bockstein characteristic of $(\mathrm{Ker}(b_i), \theta)$ exists. Similarly, one obtains an analogous claim for kernel of the maps $f_i, g_i$, as well as the cokernels. Let $\chi^u(b_i):= \chi^l(\mathrm{Bock}^\bullet(\mathrm{Ker}(b_i), \theta))$ and $\chi^v(b_i):= \chi^l(\mathrm{Bock}^\bullet(\mathrm{Coker}(b_i), \theta))$. Similarly, we define $\chi^u(f_i), \chi^u(g_i), \chi^v(f_i)$, and $ \chi^v(g_i)$. By \cref{lemm} and breaking the long exact sequence into short exact sequences it follows that 
$$\chi^l (\mathrm{Bock}^\bullet (H^i(M), \theta_1))= \chi^u(b_i) + \chi^v(b_{i-1}).$$ Similarly, we have $\chi^l (\mathrm{Bock}^\bullet (H^i(M''), \theta_3))= \chi^u(b_i) + \chi^v(f_{i}),$ and $\chi^l (\mathrm{Bock}^\bullet (H^i(M'), \theta_1))= \chi^v(b_{i-1}) + \chi^v(f_{i-1})$. Combining these, we obtain 
$$\chi^l (\mathrm{Bock}^\bullet (H^i(M''), \theta_3)) + \chi^l (\mathrm{Bock}^\bullet (H^i(M'), \theta_1)) = \chi^l (\mathrm{Bock}^\bullet (H^i(M), \theta_1)) + \chi^v(f_{i})+\chi^v(f_{i-1}).$$ Using \cref{alternating} and taking alternating sums, the result follows.
\end{proof}{}

We record a generalization of \cref{lemm}.

\begin{lemma}\label{uselater}
Let $M',M, M''$ be discrete $\mathbb{Z}_p$-modules. Suppose that we have a diagram
\begin{center}
   \begin{tikzcd}
M' \arrow[r] \arrow[d, "\theta_1"] & M \arrow[r] \arrow[d, "\theta_2"] & M'' \arrow[d, "\theta_3"] \\
M' \arrow[r]                       & M \arrow[r]                       & M''                      
\end{tikzcd} 
\end{center}{}    
such that $\mathrm{fib}(\theta_1) \to \mathrm{fib}(\theta_2) \to \mathrm{fib}(\theta_3)$ is a fiber sequence. Assume that $\mathrm{fib}(\theta_1)$ and $\mathrm{fib}(\theta_3)$ are perfect complexes, and the Bockstein characteristics of $(M', \theta_1), (M, \theta_2), (M'', \theta_3)$ exist. Then 
$$\chi^l (\mathrm{Bock}^\bullet(M, \theta_2))= \chi^l (\mathrm{Bock}^\bullet(M', \theta_1)) + \chi^l (\mathrm{Bock}^\bullet(M'', \theta_3)).$$ 

\end{lemma}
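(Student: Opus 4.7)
The plan is to reduce \cref{uselater} to \cref{lemm} by extracting from the fiber-sequence hypothesis exactly the same six-term exact sequence that the snake lemma produces in the setting of \cref{lemm}, and then rerunning that argument essentially verbatim.

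First I would unwind the hypothesis. Since $M', M, M''$ are discrete $\mathbb{Z}_p$-modules, each $\mathrm{fib}(\theta_i)$ has cohomology concentrated in degrees $0$ and $1$, with $H^0(\mathrm{fib}(\theta_i)) \cong M_i^{\theta_i}$ and $H^1(\mathrm{fib}(\theta_i)) \cong (M_i)_{\theta_i}$ (writing $M_1 = M'$, $M_2 = M$, $M_3 = M''$). The long exact sequence on cohomology of the fiber sequence $\mathrm{fib}(\theta_1) \to \mathrm{fib}(\theta_2) \to \mathrm{fib}(\theta_3)$ therefore collapses to a six-term exact sequence
\[
0 \to M'^{\theta_1} \to M^{\theta_2} \to M''^{\theta_3} \xrightarrow{s} M'_{\theta_1} \to M_{\theta_2} \to M''_{\theta_3} \to 0,
\]
with boundary map $s$. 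The perfectness of $\mathrm{fib}(\theta_1)$ and $\mathrm{fib}(\theta_3)$ guarantees that the four outer terms are finitely generated over $\mathbb{Z}_p$, and the sequence then forces $M^{\theta_2}$ and $M_{\theta_2}$ to be finitely generated as well.

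Next I would invoke the existence of the three Bockstein characteristics. Combined with finite generation this implies, as in \cref{semisimple}, that $M_i^{\theta_i}[1/p] \to (M_i)_{\theta_i}[1/p]$ is an isomorphism for each $i$. A short diagram chase on the six-term sequence, identical to the one in the proof of \cref{lemm}, then yields $s[1/p] = 0$, so that $\mathrm{Im}(s)$ is of finite length over $\mathbb{Z}_p$.

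From here the remaining steps copy the proof of \cref{lemm} essentially verbatim: split the six-term sequence into the short exact sequences $0 \to M'^{\theta_1} \to M^{\theta_2} \to \mathrm{Ker}(s) \to 0$ and $0 \to \mathrm{Coker}(s) \to M_{\theta_2} \to M''_{\theta_3} \to 0$; assemble these into a short exact sequence of two-term complexes $0 \to S_2 \to \mathrm{Bock}^\bullet(M, \theta_2) \to S_1 \to 0$ with $S_1 = [\mathrm{Ker}(s) \to M''_{\theta_3}]$ and $S_2 = [M'^{\theta_1} \to \mathrm{Coker}(s)]$; and then use the short exact sequences $0 \to \mathrm{Ker}(s) \to M''^{\theta_3} \to \mathrm{Im}(s) \to 0$ and $0 \to \mathrm{Im}(s) \to M'_{\theta_1} \to \mathrm{Coker}(s) \to 0$ to identify $\chi^l(S_1) = \chi^l(\mathrm{Bock}^\bullet(M'', \theta_3)) - \mathrm{length}(\mathrm{Im}(s))$ and $\chi^l(S_2) = \chi^l(\mathrm{Bock}^\bullet(M', \theta_1)) + \mathrm{length}(\mathrm{Im}(s))$. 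Adding these makes the $\mathrm{length}(\mathrm{Im}(s))$ terms cancel and yields the desired additivity. I do not anticipate any substantive obstacle: the content of \cref{lemm} is really a statement about a six-term exact sequence of kernels and cokernels of the $\theta_i$, and the fiber-sequence hypothesis of \cref{uselater} is calibrated to produce exactly such a sequence in place of the snake lemma input.
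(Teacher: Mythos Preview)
Your proposal is correct and is exactly the argument the paper intends: the paper's proof simply asserts that all six modules $M'^{\theta_1}, M^{\theta_2}, M''^{\theta_3}, M'_{\theta_1}, M_{\theta_2}, M''_{\theta_3}$ are finitely generated and then says ``the proof follows in a way exactly similar to the proof of \cref{lemm}.'' You have spelled out precisely how the fiber-sequence hypothesis replaces the snake lemma to produce the same six-term exact sequence, after which the argument of \cref{lemm} runs verbatim.
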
{}
\begin{proof}
By our assumptions, $M'^{\theta_1}, M^{\theta_2}, M''^{\theta_3}, M'_{\theta_1}, M_{\theta_2}, M''_{\theta_3}$ are all finitely generated $\mathbb{Z}_p$-modules. The proof follows in a way exactly similar to the proof of \cref{lemm}. 
\end{proof}{}

\begin{corollary}\label{nir}
 Let $M$ be a discrete $\mathbb{Z}_p$-module. Suppose that $\theta: M \to M$ and $\theta': M \to M$ are two maps of $\mathbb{Z}_p$-modules. Suppose that $\mathrm{fib}(\theta), \mathrm{fib}(\theta')$ are perfect complexes and the Bockstein characteristics of $(M, \theta), (M, \theta')$, $(M, \theta \circ \theta')$ exist. Then 
$$\chi^l (\mathrm{Bock}^\bullet(M, \theta \circ \theta')) = \chi^l (\mathrm{Bock}^\bullet(M, \theta)) + \chi^l (\mathrm{Bock}^\bullet(M, \theta')). $$
 \end{corollary}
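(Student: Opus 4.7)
The plan is to reduce \cref{nir} to \cref{uselater} by constructing a commutative diagram whose three vertical maps are $\theta'$, $\theta\circ\theta'$, and $\theta$, and whose induced sequence on vertical fibers is the octahedral fiber sequence associated to the factorization $M \xrightarrow{\theta'} M \xrightarrow{\theta} M$. Explicitly, I would consider the diagram
\begin{center}
\begin{tikzcd}
M \arrow[r, "\mathrm{id}"] \arrow[d, "\theta'"'] & M \arrow[r, "\theta'"] \arrow[d, "\theta\circ\theta'"'] & M \arrow[d, "\theta"] \\
M \arrow[r, "\theta"]                             & M \arrow[r, "\mathrm{id}"]                               & M
\end{tikzcd}
\end{center}
Both squares commute by direct inspection: in the left square both composites equal $\theta\circ\theta'$, and similarly in the right square both composites equal $\theta\circ\theta'$.

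I would then verify that the induced sequence $\mathrm{fib}(\theta') \to \mathrm{fib}(\theta\circ\theta') \to \mathrm{fib}(\theta)$ is a fiber sequence in $D^b(\mathbb{Z}_p)$. This is precisely the fiber sequence produced by the octahedral axiom in the stable $\infty$-category $D^b(\mathbb{Z}_p)$: for any composable pair $X \xrightarrow{f} Y \xrightarrow{g} Z$ there is a canonical fiber sequence $\mathrm{fib}(f) \to \mathrm{fib}(g\circ f) \to \mathrm{fib}(g)$ whose first and second maps are induced by exactly the two commutative squares appearing as the left and right halves of the diagram above.

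Finally, since by hypothesis $\mathrm{fib}(\theta)$ and $\mathrm{fib}(\theta')$ are perfect complexes, and all three of $\chi^l(\mathrm{Bock}^\bullet(M,\theta))$, $\chi^l(\mathrm{Bock}^\bullet(M,\theta'))$, $\chi^l(\mathrm{Bock}^\bullet(M,\theta\circ\theta'))$ are assumed to exist, the hypotheses of \cref{uselater} are met, and applying it yields the desired identity. This argument is essentially a bookkeeping exercise; the only subtlety is choosing the horizontal maps so that both squares commute and the induced fiber sequence on vertical fibers coincides with the octahedral one, which I expect to be routine once the diagram is set up correctly.
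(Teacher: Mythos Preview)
Your proposal is correct and is essentially identical to the paper's own proof: the same commutative diagram is used, the same octahedral fiber sequence $\mathrm{fib}(\theta') \to \mathrm{fib}(\theta\circ\theta') \to \mathrm{fib}(\theta)$ is extracted, and the conclusion follows from \cref{uselater}. (The paper's text contains a typo, writing $\mathrm{fib}(\theta')$ for the third term where it should be $\mathrm{fib}(\theta)$; your version is the correct one.)
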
{}

\begin{proof} Note that we have a diagram 
  \begin{center}
\begin{tikzcd}
M \arrow[d, "\theta'"] \arrow[r, "\mathrm{id}"] & M \arrow[d, "\theta \circ \theta'"] \arrow[r, "\theta'"] & M \arrow[d, "\theta"] \\
M \arrow[r, "\theta"]                           & M \arrow[r, "\mathrm{id}"]                               & M                    
\end{tikzcd}   
  \end{center} that induces a fiber sequence $\mathrm{fib}(\theta') \to \mathrm{fib}(\theta \circ \theta') \to \mathrm{fib}(\theta')$. Therefore the claim follows from \cref{uselater}.
  \end{proof}{}

\section{Stable Bockstein characteristic}\label{stablebock}
As the discussion after \cref{alternating} shows, existence of the Bockstein characteristic is a subtle matter, and not stable under extensions. In this section, we introduce one of the key new notions of this paper, which we call the stable Bockstein characteristic. Unlike the Bockstein characteristic, the stable Bockstein characteristic is defined for objects in a stable $\infty$-category satisfying very mild finiteness conditions, and agrees with the Bockstein characteristic when the latter exists.
The main result is the following.

\begin{proposition}\label{stabock}
 Let $M \in D^b (\mathbb{Z}_p)$ such that $M[\frac 1 p]$ is a perfect complex of $\mathbb{Q}_p$-vector spaces. Let $\theta: M \to M$ be a map in $D^b (\mathbb{Z}_p)$ such that $\mathrm{fib}(\theta)$ is a perfect complex over $\mathbb{Z}_p$. Then $\chi^l (\mathrm{Bock}^\bullet(M, \theta^r))$ exists for all $r \gg 0$. Moreover, $$\lim_{r \to \infty} \frac{\chi^l (\mathrm{Bock}^\bullet(M, \theta^r))}{r}$$ exists and is an integer.   
\end{proposition}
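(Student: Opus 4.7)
The plan is to (i) reduce to each discrete cohomology module $H^i(M)$ via Corollary~\ref{alternating}; (ii) exploit the finite-dimensionality of $M[1/p]$ to show that $0$ becomes a semisimple eigenvalue of $\theta^r|_{H^i(M)[1/p]}$ for $r\gg 0$, yielding existence of $\chi^l(\mathrm{Bock}^\bullet(M,\theta^r))$; (iii) apply Corollary~\ref{nir} with $\theta\leftrightarrow\theta^r$ and $\theta'\leftrightarrow\theta^s$ to obtain the additivity $a_{r+s}=a_r+a_s$, where $a_r:=\chi^l(\mathrm{Bock}^\bullet(M,\theta^r))$; (iv) deduce from this Cauchy-type equation that $a_r/r$ stabilizes to an integer.

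For (i)--(ii), fix $i$ and set $N:=H^i(M)$. The fiber sequence $\mathrm{fib}(\theta^{r-1})\to\mathrm{fib}(\theta^r)\to\mathrm{fib}(\theta)$ coming from the composition $\theta^r=\theta\circ\theta^{r-1}$ (exactly the fiber sequence exploited in the proof of Corollary~\ref{nir}) inductively shows that $\mathrm{fib}(\theta^r)$ is a perfect complex. Its long exact sequence then makes $\ker(\theta^r|_N)$ and $\mathrm{coker}(\theta^r|_N)$ finitely generated $\mathbb{Z}_p$-modules, so Remark~\ref{semisimple} applies: the Bockstein characteristic of $(N,\theta^r)$ exists iff $0$ is a semisimple eigenvalue of $\theta^r$ on the finite-dimensional $\mathbb{Q}_p$-vector space $N[1/p]$. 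The ascending chain $\ker(\theta^r|_{N[1/p]})$ stabilizes at some index $r_i$; for $r\ge r_i$ it coincides with the generalized null-space, on which $\theta^r$ acts as $0$ (while acting invertibly on the complementary generalized eigenspaces), so $0$ is indeed semisimple. Since $M\in D^b$, setting $r_0:=\max_i r_i$, Corollary~\ref{alternating} gives existence of $\chi^l(\mathrm{Bock}^\bullet(M,\theta^r))$ for all $r\ge r_0$.

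For (iii), fix $r,s\ge r_0$. On each $H^i(M)$ apply Corollary~\ref{nir} with $\theta\leftrightarrow\theta^r$ and $\theta'\leftrightarrow\theta^s$: the fibers $\mathrm{fib}(\theta^r),\mathrm{fib}(\theta^s)$ are perfect as above, and the three required Bockstein characteristics $(H^i(M),\theta^r),(H^i(M),\theta^s),(H^i(M),\theta^{r+s})$ all exist by step (ii) since $r,s,r+s\ge r_0$. Summing these identities over $i$ with alternating signs via Corollary~\ref{alternating} yields $a_{r+s}=a_r+a_s$ for $r,s\ge r_0$. Iterating gives $a_{nr}=n\,a_r$ for all $n\ge 1$, hence for $r,s\ge r_0$ one has $a_{rs}=s\,a_r=r\,a_s$, so $c:=a_r/r$ is independent of $r\ge r_0$. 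Applying this with $(r,s)=(r_0,r_0+1)$ gives $(r_0+1)\,a_{r_0}=r_0\,a_{r_0+1}$; since $\gcd(r_0,r_0+1)=1$, we deduce $r_0\mid a_{r_0}$, so $c\in\mathbb{Z}$. Therefore $a_r=cr$ for all $r\ge r_0$, and the desired limit equals $c\in\mathbb{Z}$.

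The main obstacle is step (iii): Bockstein characteristics are not stable under short exact sequences, as exhibited by the explicit counterexample displayed just before Lemma~\ref{lemm}, so additivity in $r$ cannot be derived by naive devissage. The key observation unlocking the proof is that $\theta^{r+s}=\theta^r\circ\theta^s$ is a \emph{composition of endomorphisms of the same module}, which is precisely the setting to which Corollary~\ref{nir} is tailored; once additivity is in hand, the integrality of the limit follows from a completely elementary Cauchy-equation argument.
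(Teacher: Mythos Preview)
Your proof is correct and follows essentially the same route as the paper's: reduce to discrete cohomology modules via Corollary~\ref{alternating}, use finite-dimensionality of $H^i(M)[1/p]$ and stabilization of the kernel chain to invoke Remark~\ref{semisimple} for existence at large $r$, and then use Corollary~\ref{nir} together with coprimality of consecutive integers to obtain integrality of the limit. The only cosmetic difference is that you first establish the full additivity $a_{r+s}=a_r+a_s$ for $r,s\ge r_0$ and then specialize, whereas the paper computes $a_{r(r+1)}$ two ways (as $r$-fold iterate of $\theta^{r+1}$ and as $(r+1)$-fold iterate of $\theta^r$) to get $(r+1)a_r=r\,a_{r+1}$ directly; both routes lead to the same divisibility conclusion.
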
{}

Before giving a proof of the proposition, let us introduce our desired definition.

\begin{definition}[Stable Bockstein characteristic]
 Let $M \in D^b (\mathbb{Z}_p)$ such that $M[\frac 1 p]$ is a perfect complex of $\mathbb{Q}_p$-vector spaces. Let $\theta: M \to M$ be a map in $D^b (\mathbb{Z}_p)$ such that $\mathrm{fib}(\theta)$ is a perfect complex over $\mathbb{Z}_p$. We define the stable Bockstein characteristic of $(M, \theta)$ to be $$\chi^{l}_{s}(\mathrm{Bock}^\bullet (M, \theta)):= \lim_{r \to \infty} \frac{\chi^l (\mathrm{Bock}^\bullet(M, \theta^r))}{r}.$$  
\end{definition}{}

\begin{proof}[Proof of \cref{stabock}]
By our hypothesis, it follows that $H^i(M)[\frac{1}{p}]$ is a finite dimensional $\mathbb{Q}_p$-vector space for every $i$. Further, using the short exact sequence from \cref{to}, it follows that $H^i(M)^\theta$ and $H^i(M)_\theta$ are finitely generated $\mathbb{Z}_p$-modules, implying that the two term complex $[H^i(M) \xrightarrow{\theta} H^i(M)]$ is perfect. Therefore, to prove the proposition, using \cref{alternating}, we can reduce to the case when $M$ is a discrete $\mathbb{Z}_p$-module. Let $V:= M[\frac{1}{p}]$ which is a finite dimensional $\mathbb{Q}_p$-vector space. Let $\theta':= \theta[\frac{1}{p}]$. We have a filtration $$\mathrm{Ker}(\theta') \subseteq \mathrm{Ker}(\theta'^2) \subseteq \ldots \subseteq \mathrm{Ker}({\theta'}^k)\subseteq \ldots \subseteq V;$$ we assume that it stabilizes at $\mathrm{Ker}({\theta'}^k)$.

By our choice of $k$, it follows that $0$ is a semisimple eignevalue for $\theta'^r$ for $r \ge k.$ By \cref{semisimple}, it follows that $\chi^l (\mathrm{Bock}^\bullet(M, \theta^r))$ exists for $r \ge k,$ proving the first part of our claim.

For the second part, we fix an $r \ge k.$ Using \cref{nir}, we can compute $\chi^l (\mathrm{Bock}^\bullet(M, \theta^{r(r+1)}))$ inductively in two different ways. Viewing $\theta^{r(r+1)}$ as $r$-fold composition of $\theta^{r+1}$, we see that $\chi^l (\mathrm{Bock}^\bullet(M, \theta^{r(r+1)}))=r \chi^l (\mathrm{Bock}^\bullet(M, \theta^{r+1}))$. Similarly, viewing $\theta^{r(r+1)}$ as $(r+1)$-fold composition of $\theta^r$, we get $\chi^l (\mathrm{Bock}^\bullet(M, \theta^{r(r+1)}))=(r+1) \chi^l (\mathrm{Bock}^\bullet(M, \theta^{r}))$. This implies 
\begin{equation}\label{d}
(r+1) \chi^l (\mathrm{Bock}^\bullet(M, \theta^{r}))= r \chi^l (\mathrm{Bock}^\bullet(M, \theta^{r+1})).    
\end{equation}{}
Since $\mathrm{gcd}(r, r+1)=1$, it follows that $r$ divides $ \chi^l (\mathrm{Bock}^\bullet(M, \theta^{r}))$. Therefore, using \cref{d}, we conclude that $\lim_{r \to \infty} \frac{\chi^l (\mathrm{Bock}^\bullet(M, \theta^r))}{r}$ exists and is an integer.
\end{proof}

The stable Bockstein characteristic has much better formal properties, as we note in the proposition below.

\begin{proposition}\label{be1}
Let $(M', \theta_1), (M, \theta_2)$ and $(M'', \theta_3)$ be such that we have a diagram
\begin{center}
\begin{tikzcd}
M' \arrow[r] \arrow[d, "\theta_1"'] & M \arrow[r] \arrow[d, "\theta_2"'] & M'' \arrow[d, "\theta_3"'] \\
M' \arrow[r]                       & M \arrow[r]                      & M''                       
\end{tikzcd}
\end{center}{}
 where $M'\to M \to M''$ is a fiber sequence in $D^b(\mathbb{Z}_p).$ Further, assume that $M'[\frac{1}{p}], M''[\frac{1}{p}]$ are perfect complexes over $\mathbb{Q}_p$ and $\mathrm{fib}(\theta_1), \mathrm{fib}(\theta_3)$ are perfect complexes over $\mathbb{Z}_p$. Then we have
$$\chi^l_s(\mathrm{Bock}^{\bullet}(M, \theta_2)) = \chi^l_s(\mathrm{Bock}^{\bullet}(M', \theta_1)) + \chi^l_s(\mathrm{Bock}^{\bullet}(M'', \theta_3)). $$
\end{proposition}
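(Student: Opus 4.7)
The plan is to reduce the claim to Proposition~\ref{be} (the ordinary additivity statement for Bockstein characteristics) by raising each $\theta_i$ to a sufficiently high power and then dividing by that power and passing to the limit.

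First I would verify that the stable Bockstein characteristic $\chi^l_s(\mathrm{Bock}^\bullet(M,\theta_2))$ is even defined, i.e.\ that the hypotheses of \cref{stabock} apply to $(M,\theta_2)$. Since inverting $p$ is exact, $M'[\tfrac{1}{p}]\to M[\tfrac{1}{p}] \to M''[\tfrac{1}{p}]$ is a fiber sequence of complexes of $\mathbb{Q}_p$-vector spaces, and an extension of perfect complexes is perfect, so $M[\tfrac{1}{p}]$ is perfect over $\mathbb{Q}_p$. The given commuting square is a morphism of fiber sequences, and taking horizontal fibers yields a fiber sequence $\mathrm{fib}(\theta_1) \to \mathrm{fib}(\theta_2) \to \mathrm{fib}(\theta_3)$, so $\mathrm{fib}(\theta_2)$ is an extension of perfect complexes and hence perfect over $\mathbb{Z}_p$.

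Next, for any $r\ge 1$, the fiber sequence $\mathrm{fib}(\theta_i^{r-1}) \to \mathrm{fib}(\theta_i^r) \to \mathrm{fib}(\theta_i)$ arising from $\theta_i^r = \theta_i\circ\theta_i^{r-1}$ shows by induction that $\mathrm{fib}(\theta_i^r)$ is perfect whenever $\mathrm{fib}(\theta_i)$ is. Applying \cref{stabock} to each of $(M',\theta_1)$, $(M,\theta_2)$, $(M'',\theta_3)$ simultaneously, I can find $r_0$ such that for all $r\ge r_0$ the three ordinary Bockstein characteristics $\chi^l(\mathrm{Bock}^\bullet(M',\theta_1^r))$, $\chi^l(\mathrm{Bock}^\bullet(M,\theta_2^r))$, $\chi^l(\mathrm{Bock}^\bullet(M'',\theta_3^r))$ all exist. \cref{be} then applies to $(M',\theta_1^r), (M,\theta_2^r), (M'',\theta_3^r)$ (the horizontal fiber sequence $M'\to M\to M''$ is unchanged and the squares still commute because the original ones do) and yields
$$\chi^l(\mathrm{Bock}^\bullet(M,\theta_2^r)) = \chi^l(\mathrm{Bock}^\bullet(M',\theta_1^r)) + \chi^l(\mathrm{Bock}^\bullet(M'',\theta_3^r)).$$
Dividing both sides by $r$ and letting $r\to\infty$, where each of the three limits exists by \cref{stabock} and equals the corresponding stable Bockstein characteristic, yields the desired equality. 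No step presents a real obstacle: the substance of the result is already contained in \cref{be} and \cref{stabock}, and this proposition essentially observes that the stabilization procedure is compatible with additivity in fiber sequences.
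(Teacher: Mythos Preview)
Your proposal is correct and follows exactly the same approach as the paper, which simply says ``Follows from \cref{stabock} by applying \cref{be} after replacing $\theta_i$ by $\theta_i^r$ for $r \gg 0$.'' You have merely made explicit the routine verifications (perfectness of $M[\tfrac{1}{p}]$ and of $\mathrm{fib}(\theta_2)$, and perfectness of $\mathrm{fib}(\theta_i^r)$ by induction) that the paper leaves to the reader.
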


\begin{proof}
 Follows from \cref{stabock} by applying \cref{be} after replacing $\theta_i$ by $\theta_i^r$ for $r \gg 0.$
\end{proof}{}

\begin{example}
Let $p: \mathbb{Z}_p \to \mathbb{Z}_p$ be the multiplication by $p$ map. Then $\chi^l_s(\mathbb{Z}_p, p) = -1.$ The following result gives a broad generalization of this example.    
\end{example}{}

\begin{proposition}\label{uk}
 Let $M$ be a finitely generated $\mathbb{Z}_p$-module and $\theta: M \to M$ be an endomorphism. Let $\alpha_1, \ldots, \alpha_r \in \overline{\mathbb Q}_p$ be the nonzero eigenvalues of $\theta[\frac{1}{p}]$. Then $$\chi^l_s (\mathrm{Bock}^\bullet (M,\theta))= - v_p (\alpha_1\cdots  \alpha_r).$$ Note that if there are no nonzero eigenvalues, then the right hand side should be interpreted as $0$.   
\end{proposition}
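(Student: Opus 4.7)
The plan is to use the additivity of $\chi^l_s$ in short exact sequences (\cref{be1}) to peel $M$ apart into pieces on which the invariant is either trivial or easy to compute directly, guided by the Fitting decomposition of $\theta[\tfrac{1}{p}]$ acting on $M[\tfrac{1}{p}]$.

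First I would separate out the torsion. Letting $T := M_{\mathrm{tors}}$, the submodule $T$ is characteristic, so the short exact sequence $0 \to T \to M \to M/T \to 0$ is $\theta$-equivariant with $M/T$ free, and since all three modules are finitely generated over $\mathbb{Z}_p$ the perfectness hypotheses of \cref{be1} are automatic. For finite $T$ with any endomorphism, the exact sequence $0 \to T^\theta \to T \xrightarrow{\theta} T \to T_\theta \to 0$ forces $\mathrm{length}(T^\theta) = \mathrm{length}(T_\theta)$, and a short diagram chase then shows $\chi^l(\mathrm{Bock}^\bullet(T, \theta^r)) = 0$ for every $r$, hence $\chi^l_s(\mathrm{Bock}^\bullet(T, \theta|_T)) = 0$. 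By \cref{be1}, it remains to handle $M$ free.

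Next I would apply the Fitting decomposition to $\theta' := \theta[\tfrac{1}{p}]$: for $N \gg 0$, we have $V := M[\tfrac{1}{p}] = V_0 \oplus V_{\neq 0}$ where $V_0 = \ker({\theta'}^N)$ is the generalized $0$-eigenspace and $V_{\neq 0} = \mathrm{im}({\theta'}^N)$ carries the invertible part, whose eigenvalues are precisely $\alpha_1,\ldots,\alpha_r$. Setting $M_0 := M \cap V_0$ yields a saturated, $\theta$-stable sublattice, and in the induced exact sequence $0 \to M_0 \to M \to M/M_0 \to 0$ both ends are free with $M/M_0 \hookrightarrow V_{\neq 0}$. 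Another application of \cref{be1} reduces the computation to the two extremal pieces.

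Finally, the two pieces are computed directly. On $M_0$, freeness plus nilpotence of $\theta'|_{V_0}$ forces $\theta^k = 0$ on $M_0$ itself for some $k$; for $r \geq k$ the Bockstein complex $[M_0^{\theta^r} \to (M_0)_{\theta^r}]$ is $[M_0 \xrightarrow{\mathrm{id}} M_0]$, which is acyclic, so $\chi^l_s(\mathrm{Bock}^\bullet(M_0,\theta|_{M_0})) = 0$. On $M/M_0$, the operator $\theta$ is injective since $\theta'$ is invertible on $V_{\neq 0}$, so $(M/M_0)^{\theta^r} = 0$ and the Bockstein complex reduces to $[0 \to (M/M_0)/\theta^r(M/M_0)]$, giving $\chi^l = -v_p(\det(\theta^r|_{M/M_0})) = -r\, v_p(\alpha_1\cdots\alpha_r)$ and hence $\chi^l_s = -v_p(\alpha_1\cdots\alpha_r)$. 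Summing the three contributions proves the claim. No step is deep; the only point requiring care is verifying the perfectness hypotheses of \cref{be1} at each reduction, which is routine because every module in sight is a finitely generated $\mathbb{Z}_p$-module (hence a perfect complex over $\mathbb{Z}_p$) with finite-dimensional rationalization.
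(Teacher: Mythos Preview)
Your proof is correct and follows the same overall strategy as the paper: strip off torsion using \cref{be1}, then on the free part separate the ``$\theta$ nilpotent'' piece from the ``$\theta$ injective'' piece and compute each directly. The one place you differ is in how you reach that separation. The paper argues by induction on the rank of the free part, at each step peeling off $\mathrm{Ker}(\theta')$ (a rank-$\ge 1$ free submodule on which the induced map is zero) and applying \cref{be1} to the resulting short exact sequence; in Case~2 it then invokes Smith normal form to identify $\mathrm{length}(\mathrm{Coker}(\theta))$ with $v_p(\det \theta)$. You instead take the full Fitting decomposition $V = V_0 \oplus V_{\neq 0}$ in one step, intersect with the lattice to get a saturated $\theta$-stable $M_0$, and observe directly that $\theta$ is nilpotent on $M_0$ and injective on $M/M_0$. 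This collapses the paper's induction into a single application of \cref{be1} and makes the computation on each piece immediate; the price is only that you must check $M_0 = M \cap V_0$ is saturated (so $M/M_0$ is free), which you do. The two arguments are equivalent in content, but yours is a bit cleaner.
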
{}

\begin{proof}
 Since $M$ is finitely generated, the stable Bockstein characterstic exists. Note that $\prod_{i=1}^{r} \alpha_i \in \mathbb{Q}_p$, so the $p$-adic valuation is a well-defined integer. First we handle two cases.
\vspace{2mm}

\noindent
\textit{Case 1}. Suppose that $M$ is torsion. Since $M$ is finitely generated, it must be a finite length $\mathbb{Z}_p$-module and $\chi^l_s(M, \theta)$ is simply $\mathrm{length} (M)- \mathrm{length} (M) = 0$. In this case, there are no nonzero eignevalues and the claim in the proposition holds.
\vspace{2mm}

\noindent
\textit{Case 2}. Suppose that $M$ is a free $\mathbb{Z}_p$-module of finite rank and $\theta$ is injective. In this case, it follows that $\mathrm{Coker}(\theta)$ is finite length, and therefore, $\chi^l_s(M, \theta) = - \mathrm{length}(\mathrm{Coker}(\theta)).$ Let $A$ denote a matrix representing $\theta$. By the Smith normal form, we can write $A= SDR$, where $D$ is a diagonal matrix whose entries are given by $(\beta_1, \ldots. \beta_r)$, and $S,R$ are invertible matrices over $\mathbb{Z}_p$. It follows that $\mathrm{length}(\mathrm{Coker}(\theta)) = v_p(\beta_1 \cdots \beta_r).$ Note that have $A = (SR)R^{-1} D R.$ Therefore, $v_p(\det (A)) = v_p (\det(SR)) + v_p(\det (R^{-1}DR)) = v_p(\beta_1\cdots \beta_r).$ Combining the equalities gives the claim. 
\vspace{2mm}

Now we will handle the general case by induction on the rank of (the free part of) $M$. When rank is zero, the claim in the proposition holds by Case 1 above. Suppose that the claim is proven for any module whose rank is $\le n$. We will prove the claim when the rank of (the free part of) the given module $M$ is $n+1$.  We have an exact sequence $0\to T \to M \to F \to 0$, where $F$ is free and $T$ is torsion. Note that $\theta$ restricts to a map on $T$, that we denote by $\theta_T$, and there is an induced map $\theta': F \to F$ by passing to the quotient. By Case 1 above and \cref{be1}, we reduce to proving the claim for $(F, \theta')$. Now if $\mathrm{Ker}(\theta')$ is trivial, we are done by Case 2 above. Since $\mathrm{Ker}(\theta')$ is a submodule of $F$ it must be a free module; thus we may assume that $\mathrm{Ker}(\theta')$ has rank $\ge 1.$ Note that we have the following diagram
\begin{center}
    \begin{tikzcd}
0 \arrow[r] & \mathrm{Ker}(\theta') \arrow[d, "0"] \arrow[r] & F \arrow[r] \arrow[d, "\theta'"] & Q \arrow[d, "\theta''"] \arrow[r] & 0 \\
0 \arrow[r] & \mathrm{Ker}(\theta') \arrow[r]                & F \arrow[r]                      & Q \arrow[r]                       & 0
\end{tikzcd}
\end{center}
In the above, the quotient $F/\mathrm{Ker}(\theta')$ denoted by $Q$ is module whose rank (of the free part) is $\le n$. Since the claim in the proposition clearly holds for the zero map, we are done by \cref{be1} and induction.
\end{proof}{}

\section{Special values of zeta functions}
In this section, we prove the main theorem of our paper regarding special values of zeta functions (\cref{special}). In order to state our main result, we will need a few definitions. To this end, we fix some notations for this section. Let $k$ be the finite field $\mathbb{F}_q$, where $q= p^n$. Let $M$ be a dualizable object in $\FGauge(k).$
\begin{construction}[Hodge--realization]\label{hodgerea}
 In the above scenario, we can define the Hodge realization of $M$, which gives a graded object $M^{\mathrm{Hodge}} = \bigoplus_{i} M^{\mathrm{Hodge}, i}$ in the category of perfect complexes over $k$. In the language of the stack $(\mathrm{Spec}\,k)^\mathrm{syn}$, this can be defined by taking pullback of $M$ along the closed immersion $B\mathbb{G}_m \to (\mathrm{Spec}\,k)^\mathrm{syn}$.

 We define the $j$-th Hodge cohomology group of $M$ to be $H^{i+j} (M^{\mathrm{Hodge},i}),$ which is a finite dimensional $k$-vector space, and will be denoted by $H^{i,j}_{\mathrm{Hodge}}(M).$
\end{construction}

\begin{definition}
The Hodge number of $M$, denoted as $h^{i,j}(M)$, is defined to be $\dim_k H^{i,j}_{\mathrm{Hodge}}(M).$ Note that these numbers are zero for all but finitely many $i,j \in \mathbb{Z}$.    
\end{definition}

\begin{definition}[Weighted Hodge--Euler characteristic]\label{whe}
For a dualizable $F$-gauge $M$, the $r$-th weighted Hodge Euler characteristic denoted by $\chi(M,r)$ is defined to be $$\chi(M,r):= \sum_{\substack{i,j \in \mathbb{Z}, \\ i \le r}} (-1)^{i+j} (r-i)h^{i,j}(M).$$  
\end{definition}{}

The following is the key new definition, that uses the stable Bockstein characteristic introduced in this paper.
\begin{definition} Let $\gamma$ denote the operator coming from Galois action on $R\Gamma_{\mathrm{syn}}(\overline{M}, \mathbb{Z}_p(r))$. Then the stable Bockstein characteristic ${\chi^l_s(\mathrm{Bock}^\bullet (R\Gamma_{\mathrm{syn}}(\overline{M}, \mathbb{Z}_p(r)), \gamma-1))}$ is well-defined, since $R\Gamma_{\mathrm{syn}}(\overline{M}, \mathbb{Z}_p(r))[\frac{1}{p}]$ is a perfect complex over $\mathbb{Q}_p$ (see \cref{corol}) and $\mathrm{fib}(\gamma - 1) \simeq R\Gamma_{\mathrm{syn}}({M}, \mathbb{Z}_p(r))$ is a perfect complex over $\mathbb{Z}_p$. We define $$\mu_{\mathrm{syn}}(M, r):= p^{{\chi^l_s(\mathrm{Bock}^\bullet (R\Gamma_{\mathrm{syn}}(\overline{M}, \mathbb{Z}_p(r)), \gamma-1))}}.$$ Roughly speaking, one may think of $\mu_{\mathrm{syn}}(M,r)$ as the ``size" of $R\Gamma_{\mathrm{syn}}(M, \mathbb{Z}_p(r))$ in a special sense.  
\end{definition}
Now we can state our main result regarding special values of the Zeta function of $M$. Below, for an element $t \in K$, we set $|t|_p := (\frac{1}{p})^{v_p(t)},$ which is called the (normalized) $p$-adic norm.

\begin{theorem}\label{special}
Let $r \in \mathbb{Z}$. Suppose that $\rho$ is the order of the zero of $\zeta (M, s)$ at $s=r$. Then 
$$\left| \lim_{s \to r} \frac{\zeta (M,s)}{(1-q^{r-s})^\rho}\right| _p = \frac{1}{\mu_{\mathrm{syn}}(M,r)q^{\chi(M,r)}}.$$ 
\end{theorem}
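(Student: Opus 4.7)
The proof strategy is devissage: I will first show that each of the three quantities appearing in the identity --- the leading coefficient of $\zeta$, the factor $\mu_{\mathrm{syn}}(M,r)$, and $q^{\chi(M,r)}$ --- is multiplicative under fiber sequences in $\FGauge(\mathbb{F}_q)$, and then reduce the theorem to the case of an $F$-gauge $M(G)$ attached to a $p$-divisible group $G$ over $\mathbb{F}_q$, where the formula can be verified explicitly.

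For the multiplicativity step, consider a fiber sequence $M'\to M\to M''$ of dualizable $F$-gauges. The factorization $Z(M,t)=Z(M',t)Z(M'',t)$ follows from multiplicativity of characteristic polynomials of $F^n$ (where $q=p^n$) on the long exact sequence of $H^\bullet(M_K)$; hence $\rho$ is additive and the $p$-adic norm of the leading coefficient of $\zeta$ is multiplicative. Since the Hodge pullback $j^\ast$ is exact, $h^{i,j}$ is additive, and so is $\chi(\cdot,r)$. For $\mu_{\mathrm{syn}}(\cdot,r)$, one applies \cref{be1} to the induced fiber sequence $R\Gamma_{\mathrm{syn}}(\overline{M'},\mathbb{Z}_p(r))\to R\Gamma_{\mathrm{syn}}(\overline{M},\mathbb{Z}_p(r))\to R\Gamma_{\mathrm{syn}}(\overline{M''},\mathbb{Z}_p(r))$ equipped with endomorphism $\theta=\gamma-1$. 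The hypotheses of \cref{be1} hold because the fibers of $\gamma-1$ are the syntomic complexes over $\mathbb{F}_q$ (perfect by \cite[Prop.~4.5.1]{fg}) and their $[1/p]$-versions are perfect over $\mathbb{Q}_p$ by \cref{corol}.

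Next I would reduce to $p$-divisible groups. Using the $t$-structure on $\FGauge(\mathbb{F}_q)$ and the multiplicativity above, the theorem reduces to single-degree $F$-gauges; the torsion case is handled by a direct computation of the Bockstein characteristic via \cref{uk}. For vector bundles on $(\mathrm{Spec}\,\mathbb{F}_q)^\mathrm{syn}$, the Breuil--Kisin twist $M\mapsto M\{m\}$ induces a bijection between the theorem for $(M,r)$ and for $(M\{m\},r-m)$; coupled with iterative use of filtrations induced by $j_{dR,+}$ (realizing the Hodge filtration, cf.\ \cref{notaa}), one narrows the HT weight range to $\{0,1\}$. By the author's Dieudonn\'e theory of $F$-gauges \cite{Mon}, vector bundles on $(\mathrm{Spec}\,\mathbb{F}_q)^\mathrm{syn}$ with HT weights in $\{0,1\}$ are exactly of the form $M(G)$ for a $p$-divisible group $G$ over $\mathbb{F}_q$.

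Finally, for $M(G)$ with $G$ of height $h$ and dimension $d$, the Hodge numbers are $h^{0,0}(M(G))=h-d$ and $h^{1,-1}(M(G))=d$, yielding $\chi(M(G),r)=rh-d$ for $r\geq 1$ and $\chi(M(G),r)=0$ for $r\leq 0$. The eigenvalues of $F^n$ on $D(G)_K$ are algebraic with $p$-adic slopes $\mu_\alpha\in[0,1]$ summing to $d$. For $r\geq 2$ or $r\leq -1$, all slopes differ from $r$, so by \cref{co1}, $H^\bullet_{\mathrm{syn}}(\overline{M(G)},\mathbb{Q}_p(r))=0$, giving $\mu_{\mathrm{syn}}(M(G),r)=1$; the identity then reduces to $\sum_\alpha v_p(1-q^{-r}\alpha)=-n\chi(M(G),r)$, a direct consequence of $v_p(1-q^{-r}\alpha)=n(\mu_\alpha-r)$ and $\sum_\alpha\mu_\alpha=d$. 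For $r\in\{0,1\}$, \cref{uk} computes $\chi^l_s$ as the negated $p$-adic valuation of the product of nonzero eigenvalues of $\gamma-1$ on $H^\bullet_{\mathrm{syn}}(\overline{M(G)},\mathbb{Q}_p(r))$, which together with \cref{co1} and the slope-$r$ analysis in \cref{co2} matches exactly the $\alpha\neq q^r$ eigenvalue contributions to the leading term of $\zeta$. The principal obstacle is the HT-weight reduction: arranging the filtrations compatibly with the stacky structure of the syntomification and with the hypotheses of \cref{be1} requires the GAGA equivalence $\mathrm{Perf}((\mathrm{Spec}\,\mathbb{F}_q)^\mathrm{syn})\otimes\mathbb{Q}_p\simeq\mathrm{Perf}(\mathcal{I}so_K)$ from \cref{isostack} together with the Dieudonn\'e--Manin decomposition of isocrystals to split off simple summands.
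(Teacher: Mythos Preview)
Your overall strategy matches the paper's: verify that the leading term of $\zeta$, the invariant $\mu_{\mathrm{syn}}$, and $q^{\chi}$ are each multiplicative on fiber sequences (via \cref{be1} for the second), then devissage down to vector bundles with Hodge--Tate weights in $\{0,1\}$, and finish with an explicit eigenvalue computation for $p$-divisible groups. The multiplicativity step and the final $p$-divisible group calculation are essentially the paper's \cref{l3}.

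There is, however, a genuine gap in your Hodge--Tate weight reduction. You propose to narrow the weight range by ``iterative use of filtrations induced by $j_{dR,+}$ (realizing the Hodge filtration)''. But $j_{dR,+}^*M$ is only a filtered $k$-module; its steps are not sub-objects of $M$ in $\FGauge(k)$. The Frobenius structure on an $F$-gauge does not preserve the Hodge filtration (e.g.\ the divided Frobenius $\varphi:\mathrm{Fil}^iM\to M^u$ does not land back in $\mathrm{Fil}^iM$), so the Hodge pieces do not lift to sub-$F$-gauges and you cannot apply your multiplicativity step to them. Were this possible, every isocrystal would be built from integer-slope pieces, contradicting the existence of $E_{1/2}$. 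The paper's reduction (\cref{l2}) is genuinely different: it passes to the isogeny category $\mathrm{Perf}(\mathcal{I}so_K)$, uses the Dieudonn\'e--Manin \emph{slope} decomposition of $V[1/p]$, and then invokes \cref{se} to show that each pure-slope summand of slope in $[0,1]$ lifts to an honest Dieudonn\'e module (after a Breuil--Kisin twist). The resulting map $V\to\bigoplus U_i$ has $p$-power-torsion fiber, which feeds back into the torsion case. You allude to Dieudonn\'e--Manin in your final sentence, but the mechanism you actually propose does not work; the nontrivial content is \cref{se}.

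A secondary issue: your treatment of the torsion case via \cref{uk} is not justified, since \cref{uk} needs the module to be finitely generated over $\mathbb{Z}_p$, whereas $H^i_{\mathrm{syn}}(\overline M,\mathbb{Z}_p(r))$ need not be finitely generated even when $M$ is $p^m$-torsion (its cohomology involves $W(\overline k)/p^m$). The paper (\cref{l0}) instead observes that $\mathrm{fib}(\gamma-1)=R\Gamma_{\mathrm{syn}}(M,\mathbb{Z}_p(r))$ itself has finite-length cohomology, so the stable Bockstein characteristic equals $\chi^l(R\Gamma_{\mathrm{syn}}(M,\mathbb{Z}_p(r)))$ directly, and then computes this as $-\chi^l(M^u/\mathrm{Fil}^rM)$ via the fiber sequence $R\Gamma_{\mathrm{syn}}(M,\mathbb{Z}_p(r))\to\mathrm{Fil}^rM\to M^u$ together with \cref{niceob}.
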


An important preliminary observation in our proof is that the weighted Hodge Euler characteristic can be understood in a different way from the $F$-gauge $M$. To this end, let us review certain piece of structures that are present in an $F$-gauge. By pulling back $M$ along the map $(\mathrm{Spec}\, k)^{\mathcal{N}} \to (\mathrm{Spec}\, k)^{\mathrm{syn}}$, we can obtain a filtered module (over the filtered ring $W(k)$, equipped with the $p$-adic filtration) which we denote as $\mathrm{Fil}^\bullet M.$ The graded pieces of this filtration will be denoted as $\mathrm{gr}^\bullet M$. Let $M^u$ denote the $W(k)$-module underlying the $F$-gauge. There is a natural map $\mathrm{Fil}^r M \to M^u$ whose cofiber will be denoted by $M^u/ \mathrm{Fil}^r M.$ Since $M$ is a dualizable $F$-gauge, it follows that $M^u/ \mathrm{Fil}^r M$ is a perfect complex of $\mathbb{Z}_p$-modules. Further, we have that $(M^u/ \mathrm{Fil}^r M)[\frac{1}{p}] \simeq 0$. In other words, each cohomology groups of $M^u/ \mathrm{Fil}^r M$ must be a finite length $\mathbb{Z}_p$-module. In particular, $\chi^l(M^u/ \mathrm{Fil}^r M)$ is well-defined.
\begin{definition}
 In the above scenario, the integer $\chi^l(M^u/ \mathrm{Fil}^r M)$ will be called the $r$-th Nygaard characteristic of the dualizable $F$-gauge $M$.   
\end{definition}

Now we are ready to state and prove the following proposition, which gives an interpretation of the classical Hodge Euler characteristic in terms of the Nygaard characteristic that we just defined.

\begin{proposition}\label{niceob}
Let $M$ be a dualizable $F$-gauge over $k.$ Then
$$p^{\chi^l (M^u/\mathrm{Fil}^r M)}=q^{\chi(M,r)}.$$   
\end{proposition}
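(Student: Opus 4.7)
The plan is to use the Nygaard filtration $\mathrm{Fil}^\bullet M$ on $M$. Since $W(k)$ is equipped with the $p$-adic filtration and $\mathrm{gr}^\bullet W(k) = k$, each graded piece $\mathrm{gr}^i M$ is a perfect complex of $k$-vector spaces. The iterated cofiber sequences $\mathrm{Fil}^{i+1} M / \mathrm{Fil}^r M \to \mathrm{Fil}^i M / \mathrm{Fil}^r M \to \mathrm{gr}^i M$ together with additivity of $\chi^l$, combined with $\chi^l(\mathrm{gr}^i M) = n \cdot \chi_k(\mathrm{gr}^i M)$ (where $q = p^n$ and $\chi_k$ denotes the $k$-dimensional Euler characteristic), yield
\[
\chi^l(M^u / \mathrm{Fil}^r M) \;=\; n \sum_{i < r} \chi_k(\mathrm{gr}^i M).
\]

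On the other hand, unpacking $H^{i,j}_{\mathrm{Hodge}}(M) = H^{i+j}(M^{\mathrm{Hodge}, i})$ yields $\chi_k(M^{\mathrm{Hodge}, i}) = \sum_j (-1)^{i+j} h^{i,j}(M)$, so that $\chi(M, r) = \sum_{i \le r} (r - i) \, \chi_k(M^{\mathrm{Hodge}, i})$. The proposition thus reduces to the key identity
\[
\chi_k(\mathrm{gr}^i M) \;=\; \sum_{j \le i} \chi_k(M^{\mathrm{Hodge}, j}) \qquad \text{for every } i \in \mathbb{Z};
\]
indeed, summing over $i < r$ and switching the order of summation yields $\sum_{i < r} \chi_k(\mathrm{gr}^i M) = \sum_{j < r} (r - j) \chi_k(M^{\mathrm{Hodge}, j}) = \chi(M, r)$, since the $j = r$ contribution vanishes.

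I would prove the key identity by devissage. Both sides are additive in fiber sequences of perfect complexes on $(\mathrm{Spec}\, k)^{\mathrm{syn}}$ and transform compatibly under shifts, since the Nygaard graded functor, the Hodge realization $j^*$, and the two Euler characteristics all respect fiber sequences. It thus suffices to verify the identity on a generating class; the natural first attempt is the family of Breuil--Kisin twisted line bundles $\mathcal{O}\{c\}$ together with their shifts. A direct computation---the Nygaard filtration on $\mathcal{O}$ is the $p$-adic filtration on $W(k)$, and twisting by $\mathcal{O}\{c\}$ shifts the filtration by $c$---gives $\mathrm{gr}^i \mathcal{O}\{c\} \simeq k$ for $i \ge c$ and vanishes otherwise, while $\mathcal{O}\{c\}^{\mathrm{Hodge}, j}$ equals $k$ if $j = c$ and vanishes otherwise; both sides therefore equal the indicator of $\{c \le i\}$. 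Consequently, $\chi^l(M^u / \mathrm{Fil}^r M) = n \, \chi(M, r)$ and $p^{\chi^l(M^u / \mathrm{Fil}^r M)} = q^{\chi(M, r)}$.

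The main obstacle will be making the devissage rigorous: a reduction to line bundles $\mathcal{O}\{c\}$ alone is unlikely to suffice for $F$-gauges whose rational part has non-integer slope (cf.\ $E_{s/r}$ with $r > 1$ in \cref{ex}), so one should enlarge the generating class to pure-Hodge-weight vector bundles. For these, the Nygaard filtration can be written explicitly in terms of the Hodge filtration and the $p$-adic filtration (analogous to the rank-one formula above), and the key identity can be verified directly; the Hodge filtration on an arbitrary dualizable $F$-gauge then provides the devissage back to the general case. Alternatively, one may invoke the precise description of Nygaard graded pieces in terms of (partial) Hodge--Tate cohomology with its conjugate filtration from the Bhatt--Lurie \cite{fg} and Drinfeld \cite{prismatization} frameworks.
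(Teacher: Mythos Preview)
Your reduction to the ``key identity'' $\chi_k(\mathrm{gr}^i M) = \sum_{j \le i} \chi_k(M^{\mathrm{Hodge}, j})$ is exactly what the paper does, and the combinatorics after that point match as well. The difference is in how this identity is established.

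Your primary route---devissage to a generating class---has the gap you already anticipate: the line bundles $\mathcal{O}\{c\}$ (even together with shifts) do not generate $K_0$ of $\mathrm{Perf}((\mathrm{Spec}\, k)^{\mathrm{syn}})$, and enlarging to ``pure-Hodge-weight vector bundles'' still leaves you with the task of writing down the Nygaard filtration explicitly for those and then arguing that an arbitrary dualizable $F$-gauge is built from them in $K_0$. This is not impossible, but it is circuitous.

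The paper instead takes your \emph{alternative} route and makes it the main argument: it uses the structural fact that, for any dualizable $F$-gauge $M$, the Nygaard graded piece $\mathrm{gr}^i M$ is canonically identified with the $i$-th stage $\mathrm{Fil}^i M^{HT}$ of the conjugate (Hodge--Tate) filtration obtained by pulling $M$ back along $j_{HT,+}\colon \mathbb{A}^1/\mathbb{G}_m \to (\mathrm{Spec}\,k)^{\mathrm{syn}}$. Since the associated graded of $\mathrm{Fil}^\bullet M^{HT}$ is $\bigoplus_j M^{\mathrm{Hodge},j}$, one immediately gets $\chi^l(\mathrm{gr}^i M) = \sum_{j \le i} \chi^l(M^{\mathrm{Hodge},j})$ for every $M$, with no case analysis or devissage at all. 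So your instinct in the last sentence was the right one: invoke the identification $\mathrm{gr}^i_{\mathrm{Nyg}} \simeq \mathrm{Fil}^i_{HT}$ directly and the key identity falls out in one line.
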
{}
\begin{proof}
This amounts to the assertion that $\chi^l (M^u/\mathrm{Fil}^r M)=n \cdot \chi(M,r)$. Since $M$ is a dualizable $F$-gauge, $M^u/\mathrm{Fil}^r M$ admits a finite filtration whose graded pieces are given by $\mathrm{gr}^i (M)$ for $i \le r-1.$ Note that $\mathrm{gr}^i(M)$ is also a perfect complex of $\mathbb{Z}_p$-modules whose cohomology groups are finite length $\mathbb{Z}_p$-modules. Therefore,
\begin{equation}\label{eq1}
 \chi^l(M^u/\mathrm{Fil}^rM) = \sum_{i \le r-1} \chi^l (\mathrm{gr}^i(M)).   
\end{equation}

Note that there is a closed immersion $$h_{HT,+}: \mathbb{A}^1_k/ \mathbb{G}_m \to (\mathrm{Spec}\, k)^\mathrm{syn},$$ and pullback of $M$ along this map can be thought of as a filtered $k$-modules which we may write as $\mathrm{Fil}^\bullet M^{HT}.$ The associated graded of this filtered object will be denoted as $\mathrm{gr}^\bullet M^{HT}$. Forming the associated graded in this case amounts to pulling back along the map $B\mathbb{G}_m \to (\mathrm{Spec}\,k)^{\mathrm{syn}}$. This implies that we must have
$$\oplus_i \mathrm{gr}^i M^{HT} \simeq \oplus_i M^{\mathrm{Hodge}, i}.$$
 By construction, it also follows that $\mathrm{gr}^i(M) \simeq \mathrm{Fil}^i M^{HT}.$ One can again put a finite filtration on $\mathrm{Fil}^i M^{HT}$ such that the graded pieces are given by $\mathrm{gr}^j M^{HT}$ for $j \le i$. This implies that 
\begin{equation}\label{eq2}
\chi^l( \mathrm{Fil}^i M^{HT}) = \sum_{j \le i} \chi^l (M^{\mathrm{Hodge},j}).   
\end{equation}
Further, one has 
\begin{equation}\label{eq3}
 \chi^l (M^{\mathrm{Hodge},j}) = \sum_{k} (-1)^{j+k} n\cdot h^{j,k}(M). 
\end{equation}
Combining these equalities, we obtain that
$$\frac{\chi^l (M^u/\mathrm{Fil}^r M)}{n} = \sum_{\substack{i \le r-1\\ j \le i \\ k \in \mathbb{Z}}} (-1)^{j+k} h^{j,k}(M).$$ However, each summand appears exactly once for every value of $i \in \left \{j, j+1, \ldots, r-1 \right \}$. The latter set has size $r-j$. Therefore, the above sum is equal to 
$$\sum_{\substack{j,k \in \mathbb{Z}\\j \le r}} (-1)^{j+k} (r-j) h^{j,k}(M).$$This finishes the proof.
\end{proof}

\begin{remark}[Twists and zeta values]\label{twistsz}
 Suppose that $V$ is an isocrystal over $\mathbb{F}_{q}$, where $q = p^n$. Let $V \left \{i \right \}$ denote the $i$-th Breuil--Kisin twist of $V$. Let $P_V(t) = \det (1- tF^n|V)$ and $P_{V \left \{i \right \}}(t) = \det (1- tF^n|V \left \{i \right \}).$ Then it follows that $P_{V \left \{i \right \}}(t) = P_V(q^{-i}t)$. This implies that if $M$ is a dualizable $F$-gauge over $k$, and $M \left \{i\right \}$ is the $i$-th Breuil--Kisin twist of $M$ as an $F$-gauge, then we have
$$\zeta(M\left \{i \right \}, s)=\zeta(M, s+i). $$ Moreover, by construction, it follows that we have $${\chi^l_s(\mathrm{Bock}^\bullet (R\Gamma_{\mathrm{syn}}(\overline{M \left \{i \right \}}, \mathbb{Z}_p(r)), \gamma-1))}= {\chi^l_s(\mathrm{Bock}^\bullet (R\Gamma_{\mathrm{syn}}(\overline{M}, \mathbb{Z}_p(r+i)), \gamma-1))}.$$ Thus we have
$$\mu_{\mathrm{syn}} (M \left\{i \right \}, r) = \mu_{\mathrm{syn}} (M , r+i).$$ By \cref{niceob}, it also follows that $$\chi(M\left \{i \right\},r)= \chi(M,r+i). $$ Consequently, proving \cref{special} for a fixed $M$ and a fixed $r \in \mathbb{Z}$ is equivalent to proving \cref{special} for $M \left\{i \right \}$ and $r-i$. 
\end{remark}{}

Note that viewing an $F$-gauge $M$ as an object of the derived category of the stack $(\mathrm{Spec}\,k)^{\mathrm{syn}}$, one has 
$R\Gamma_{\mathrm{syn}}(M, \mathbb{Z}_p(r)) \simeq R\Gamma ((\mathrm{Spec}\,k)^{\mathrm{syn}}, M \left \{r\right \} ).$ When $M$ is the $F$-gauge associated to a smooth proper scheme $X$ over $k$, this gives a new way (involving the Nygaard filtration) of understanding weight $n$ syntomic cohomology of $X$ which is quite different from the classical one involving de Rham--Witt complexes. This idea, along with \cref{niceob}, plays a key role in the proof of the proposition below.
\begin{proposition}\label{l0}
Let $M$ be a dualizable $F$-gauge such that for some $m \in \mathbb{N}$, multiplication by $p^m$ on $M$ is homotopic to zero. Then we have
$$\mu_{\mathrm{syn}} (M,r)= \frac{1}{q^{\chi(M,r)}}.$$    
\end{proposition}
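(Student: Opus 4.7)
My plan is to reduce both sides to length Euler characteristics of perfect $\mathbb{Z}_p$-complexes and then apply Proposition \ref{niceob}. Since $p^m \cdot \mathrm{id}_M$ is nullhomotopic in $\FGauge(k)$, the same holds for $\overline{M}\{r\}$ on $(\mathrm{Spec}\,\overline{\mathbb{F}}_q)^{\mathrm{syn}}$, and hence $R\Gamma_{\mathrm{syn}}(\overline{M}, \mathbb{Z}_p(r))$ is $p^m$-torsion in $D^b(\mathbb{Z}_p)$. In particular, it vanishes after inverting $p$, so $\gamma - 1$ is trivially an isomorphism on $R\Gamma_{\mathrm{syn}}(\overline{M}, \mathbb{Q}_p(r))$; moreover, $\mathrm{fib}(\gamma - 1) \simeq R\Gamma_{\mathrm{syn}}(M, \mathbb{Z}_p(r))$ is a $p^m$-torsion perfect $\mathbb{Z}_p$-complex, whose cohomology groups are therefore of finite length.

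Under these hypotheses I claim that the stable Bockstein characteristic collapses to an ordinary length Euler characteristic:
$$\chi^l_s(\mathrm{Bock}^\bullet(R\Gamma_{\mathrm{syn}}(\overline{M}, \mathbb{Z}_p(r)), \gamma-1)) = \chi^l(R\Gamma_{\mathrm{syn}}(M, \mathbb{Z}_p(r))).$$
Indeed, whenever $P[\frac{1}{p}] = 0$ and $\mathrm{fib}(\theta)$ has finite length cohomology, the standard alternating-sum identity $\sum(-1)^i\mathrm{length}(C^i) = \sum(-1)^i\mathrm{length}(H^i(C))$ applied to the chain complex $\mathrm{Bock}^\bullet(P, \theta)$ immediately gives $\chi^l(\mathrm{Bock}^\bullet(P, \theta)) = \chi^l(\mathrm{fib}(\theta))$. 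Iterating \cref{nir} yields $\chi^l(\mathrm{Bock}^\bullet(P, \theta^s)) = s\cdot \chi^l(\mathrm{Bock}^\bullet(P, \theta))$, and dividing by $s$ and passing to the limit gives the claim.

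It remains to compute $\chi^l(R\Gamma_{\mathrm{syn}}(M, \mathbb{Z}_p(r)))$. The description of $(\mathrm{Spec}\,k)^{\mathrm{syn}}$ as a coequalizer of the Nygaard and prismatic pieces from \cite{fg, prismatization} gives a fiber sequence
$$R\Gamma_{\mathrm{syn}}(M, \mathbb{Z}_p(r)) \longrightarrow \mathrm{Fil}^r M \xrightarrow{\varphi_r - \iota} M^u,$$
whose three terms are all $p^m$-torsion and perfect, hence have finite length cohomology. Taking Euler characteristics and using the cofiber sequence $\mathrm{Fil}^r M \to M^u \to M^u/\mathrm{Fil}^r M$ yields
$$\chi^l(R\Gamma_{\mathrm{syn}}(M, \mathbb{Z}_p(r))) = \chi^l(\mathrm{Fil}^r M) - \chi^l(M^u) = -\chi^l(M^u/\mathrm{Fil}^r M).$$
By \cref{niceob} this last quantity equals $-n\chi(M,r)$, where $q = p^n$, so $\mu_{\mathrm{syn}}(M, r) = p^{-n\chi(M,r)} = q^{-\chi(M,r)}$, as desired. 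The subtle point is the collapse of the stable Bockstein characteristic, which rests on the $p^m$-torsion vanishing of the rationalized syntomic cohomology; the fiber sequence above is part of the Bhatt--Lurie machinery, and the remainder is bookkeeping.
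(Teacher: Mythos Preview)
Your proof is correct and follows essentially the same route as the paper: observe that $p^m$-torsion forces the syntomic cohomology groups to have finite length, collapse the stable Bockstein characteristic to $\chi^l(R\Gamma_{\mathrm{syn}}(M,\mathbb{Z}_p(r)))$, and compute the latter via the Nygaard fiber sequence together with \cref{niceob}. One small technical point: \cref{nir} is stated only for discrete $\mathbb{Z}_p$-modules, so to iterate it you should either first reduce to the cohomology groups $H^i(P)$ via \cref{alternating}, or simply note that $\mathrm{fib}((\gamma-1)^s)$ is perfect with finite-length cohomology (by iterated fiber sequences built from $\mathrm{fib}(\gamma-1)$) and compute $\chi^l(\mathrm{Bock}^\bullet(P,(\gamma-1)^s))=\chi^l(\mathrm{fib}((\gamma-1)^s))=s\cdot\chi^l(\mathrm{fib}(\gamma-1))$ directly.
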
{}

\begin{proof}
Note that under our assumptions, $R\Gamma_{\mathrm{syn}}(M,\mathbb{Z}_p(r))$ is a perfect complex of $\mathbb{Z}_p$-modules, whose cohomology groups are all killed by $p^m$. Therefore, the cohomology groups are all finite length $\mathbb{Z}_p$-modules. In this case, the stable Bockstein characteristic ${\chi^l_s(\mathrm{Bock}^\bullet (R\Gamma_{\mathrm{syn}}(\overline{M}, \mathbb{Z}_p(r)), \gamma-1))}$ is simply equal to $\chi^l (R\Gamma_{\mathrm{syn}}(M,\mathbb{Z}_p(r))).$ Using the fact that $R\Gamma_{\mathrm{syn}}(M, \mathbb{Z}_p(r)) \simeq R\Gamma ((\mathrm{Spec}\,k)^{\mathrm{syn}}, M \left \{r\right \} )$, we obtain a fiber sequence
$$R\Gamma_{\mathrm{syn}}(M, \mathbb{Z}_p(r) ) \to \mathrm{Fil}^r M \xrightarrow{\varphi - \mathrm{can}} M^u,$$ where $\varphi$ is a certain Frobenius linear map and $\mathrm{can}$ is the canonical map arising from the filtered object $\mathrm{Fil}^\bullet M.$ It follows from our assumptions that $\chi^l (\mathrm{Fil}^r M)$ and $\chi^l ( M^u)$ both exist and 
\begin{equation}\label{lw1}
 \chi^l (\mathrm{Fil}^r M) - \chi^l ( M^u) = \chi^l (R\Gamma_{\mathrm{syn}}(M,\mathbb{Z}_p(r))).  
\end{equation}
Also, we have a fiber sequence 
$$\mathrm{Fil}^r M \xrightarrow{\mathrm{can}} M^{u} \to M^u/\mathrm{Fil}^r M. $$ It follows that one has
\begin{equation}\label{lw2}
\chi^l (\mathrm{Fil}^r M) - \chi^l ( M^u) =- \chi^l(M^u/ \mathrm{Fil}^r M).  
\end{equation}
Combining the two equalities above, we obtain $$\chi^l (R\Gamma_{\mathrm{syn}}(M,\mathbb{Z}_p(r)))= - \chi^l(M^u/ \mathrm{Fil}^r M).$$ By \cref{niceob}, we obtain $$\mu_{\mathrm{syn}}(M,r)= \frac{1}{q^{\chi(M,r)}} ,$$ as desired.
\end{proof}

Now we will establish certain structural results regarding dualizable $F$-gauges over $k$.

\begin{lemma}\label{l1}
Let $M$ be a dualizable $F$-gauge over $k.$ Then there exists a fiber sequence
$$  T \to M \to V $$such that $V \simeq \oplus_{i=1}^{m} V_i[r_i]$, where $V_i$ is a vector bundle on $(\mathrm{Spec}\, k)^{\mathrm{syn}}$, $r_i \in \mathbb{Z},$ and $T$ is an $F$-gauge such that multiplication by $p^k$ is null-homotopic for some $k \in \mathbb{N}.$ 
\end{lemma}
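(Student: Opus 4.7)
The plan is to establish the decomposition rationally first and then lift it to an integral map. By \cref{isom} (equivalently \cref{isostack}), $M \otimes \mathbb{Q}_p$ corresponds to a perfect complex on $\mathcal{I}so_K$, i.e.\ a bounded complex in the abelian category $\mathrm{Isocrys}(k)$. The first thing to observe is that $\mathrm{Isocrys}(k)$ has cohomological dimension at most $1$: since $\mathcal{I}so_K$ is the quotient of $\mathrm{Spec}\, K$ by the monoid generated by Frobenius, the derived Hom between two isocrystals $(V_M, F_M)$ and $(V_N, F_N)$ is computed as a two-term fiber of the conjugation operator $\Phi - 1$ on $\mathrm{Hom}_K(V_M, V_N)$, so all $\mathrm{Ext}^i$ vanish for $i \ge 2$.

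With cohomological dimension $\le 1$ in hand, the formality of $M \otimes \mathbb{Q}_p$ follows by a Postnikov splitting argument. Inductively, assume $\tau_{<n}(M \otimes \mathbb{Q}_p) \simeq \bigoplus_{i<n} H^i(M_K)[-i]$; the boundary map of the fiber sequence $\tau_{<n} \to \tau_{\le n} \to H^n(M_K)[-n]$ lies in $\bigoplus_{i<n} \mathrm{Ext}^{n-i+1}(H^n(M_K), H^i(M_K))$, whose summands all vanish since $n-i+1 \ge 2$. Hence $M \otimes \mathbb{Q}_p \simeq \bigoplus_i H^i(M_K)[-i]$ in $D^b(\mathrm{Isocrys}(k))$.

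To integralize, for each $H^i(M_K)$ I choose a $W(k)$-lattice, which by \cref{isocsyn} produces a vector bundle $V_i$ on $(\mathrm{Spec}\, k)^{\mathrm{syn}}$ with $V_i \otimes \mathbb{Q}_p \simeq H^i(M_K)$. Setting $V := \bigoplus_i V_i[-i]$ yields a dualizable $F$-gauge with $V \otimes \mathbb{Q}_p \simeq M \otimes \mathbb{Q}_p$. Since $M$ and $V$ are dualizable, $\mathrm{Map}(M, V) \simeq V \otimes M^\vee$ is itself a perfect $F$-gauge and inverting $p$ commutes with $\mathrm{Map}$, so the rational equivalence lifts (after multiplying by a power of $p$) to an actual map $f \colon M \to V$ which remains a rational equivalence. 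Putting $T := \mathrm{fib}(f)$ gives a perfect $F$-gauge with $T \otimes \mathbb{Q}_p = 0$. Then $\mathrm{End}(T) = \pi_0 R\Gamma((\mathrm{Spec}\, k)^{\mathrm{syn}}, T \otimes T^\vee)$ is a finitely generated $\mathbb{Z}_p$-module (by \cite[Prop.~4.5.1]{fg} applied to the dualizable $F$-gauge $T \otimes T^\vee$) which vanishes after inverting $p$, hence is annihilated by some $p^k$; in particular $p^k \cdot \mathrm{id}_T = 0$, as required.

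The main obstacle is the formality step---the claim that $\mathrm{Isocrys}(k)$ has cohomological dimension $\le 1$, allowing the Postnikov tower to split on the nose. This is essential because $\mathrm{Isocrys}(\mathbb{F}_q)$ is typically \emph{not} semisimple as an abelian category (there can be nontrivial $\mathrm{Ext}^1$'s between isocrystals over a finite field), but these do not obstruct the Postnikov splitting, which is only sensitive to $\mathrm{Ext}^{\ge 2}$. The rest---the lattice choice, the denominator-clearing lift, and the finite-generation of $\mathrm{End}(T)$---is formal.
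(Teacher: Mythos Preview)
Your proof is correct and follows essentially the same route as the paper's: establish formality of $M[1/p]$ in $\mathrm{Perf}(\mathcal{I}so_K)$, lift each isocrystal summand to a vector bundle via a choice of lattice, clear denominators to produce an integral map $f\colon M\to V$, and take $T=\mathrm{fib}(f)$. The only cosmetic difference is that the paper phrases the formality step as ``$K_\sigma[F]$ is left-hereditary'' rather than ``$\mathrm{Isocrys}(k)$ has cohomological dimension $\le 1$,'' but these are the same statement; your Postnikov argument and your more explicit justification that $p^k\cdot\mathrm{id}_T=0$ via finite generation of $\mathrm{End}(T)$ are welcome elaborations of points the paper leaves implicit.
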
{}

\begin{proof}
 We will use the fact that $\mathrm{Perf}((\mathrm{Spec}\, k)^\mathrm{syn}) \otimes{\mathbb{Q}_p} \simeq \mathrm{Perf}(\mathrm{Spec}\,K)^\mathrm{syn}$ as defined in \cref{co1}. The image of $M$ in $\mathrm{Perf}((\mathrm{Spec}\, k)^\mathrm{syn}) \otimes{\mathbb{Q}_p}$ will be denoted by $M[1/p].$ Under the above isomorphism, $M[1/p]$ can be regarded as a perfect complex of $k$-vector spaces equipped with a Frobenius semi-linear isomorphism. Since the ring Frobenius twisted polynomial ring $k_{\sigma}[F]$ is left-hereditary, one has an isomorphism $$f': M[1/p] \simeq \bigoplus_{i=1}^{m} V'_i[r_i]$$ in $\mathrm{Perf}(\mathrm{Spec}\,K)^\mathrm{syn}$, where for each $i$, $V'_i$ is an isocrystal over $k.$ By choosing a lattice for $V'_i$, one can lift $V'_i$ to a vector bundle on $(\mathrm{Spec}\,k)^{\mathrm{syn}}$, which we will denote by $V_i$. Let us set $V := \oplus_{i=1}^{m} V_i[r_i]$. By replacing $f'$ by $p^kf'$ for $k \gg 0$, we can without loss of generality assume that there exists a map 
$f: M \to V $ in $\mathrm{Perf}((\mathrm{Spec}\, k)^{\mathrm{syn}})$ that induces $f'$. Let $T:= \mathrm{fib}(M \xrightarrow{f} N).$ By construction, it follows that $T[1/p] \simeq 0$ in $\mathrm{Perf}(\mathrm{Spec}\,K)^\mathrm{syn}.$ The latter implies that the zero map on $T$ is homotopic to multiplication by a power of $p.$
\end{proof}

\begin{remark}\label{identify}
Suppose that $k$ is a perfect field. Then a vector bundle on $(\mathrm{Spec}\,k)^\mathrm{syn}$ with Hodge--Tate weights in the integers $\left \{0, 1\right\}$ can be concretely identified with finite free $W(k)$-modules $M$ with a Frobenius semilinear operator $F: M \to M$ such that $p M \subseteq F(M)$ (see \cite[Prop.~3.45]{Mon}). We will call the category of such objects Dieudonn\'e modules. By Dieudonn\'e theory, the category of such vector bundles is equivalent to the category of $p$-divisible groups over $k.$
\end{remark}{}

\begin{definition}
 Let $k$ be a perfect field (not assumed to be algebraically closed) and $i \in \mathbb{Q}$. An isocrystal $V$ over $k$ will be called pure of slope $i$ if the base change of $V$ viewed as an isocrystal over $\overline{k}$ is a direct sum of finitely many copies of $E_i$.  
\end{definition}

\begin{lemma}\label{se}
Let $i \in \mathbb{Q} \cap [0,1].$ Let $V$ be an isocrystal over a perfect field $k$ that is pure of slope $i$. Then there exists a Dieudonn\'e module $(M,F)$ such that $V$ is isomorphic to the isocrystal corresponding to $(M[\frac{1}{p}],F[\frac{1}{p}]).$    
\end{lemma}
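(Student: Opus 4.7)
The plan is to construct an explicit Dieudonn\'e lattice in $E_{s/r}$ over $\overline k$, use it to produce one in $V\otimes_K\overline K$ (via the hypothesis that $V$ is pure of slope $s/r$), and then descend back to $V$ by a Galois orbit sum.

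Write $i=s/r$ in lowest terms with $r>0$ and $0\leq s\leq r$. In the explicit model of \cref{ex}, $E_{s/r}$ has underlying space $\overline K^r$ with basis $e_1,\ldots,e_r$ and $Fe_k = e_{k-1}$ for $k\geq 2$, $Fe_1 = p^s e_r$. Set $a_k := \lfloor (k-1)s/r \rfloor$ and
\[
\overline N := \bigoplus_{k=1}^{r} W(\overline k)\cdot p^{a_k} e_k \subset E_{s/r}.
\]
Since $s/r\leq 1$, the increments $a_{k+1}-a_k$ for $1\leq k\leq r-1$ each lie in $\{0,1\}$, and a small calculation (distinguishing the cases $s=0$, $0<s<r$, and $s=r=1$) shows that the wrap-around $(a_1+s)-a_r$ also lies in $\{0,1\}$. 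Translating these inequalities into $W(\overline k)$-coefficients directly yields $F(\overline N)\subseteq \overline N$ and $p\overline N\subseteq F(\overline N)$, so $\overline N$ is a Dieudonn\'e lattice in $E_{s/r}$. Since $V$ is pure of slope $s/r$, we have $V\otimes_K\overline K\simeq E_{s/r}^{\oplus n}$, and transporting $\overline N^{\oplus n}$ across such an isomorphism gives a Dieudonn\'e lattice $\overline M\subset V\otimes_K\overline K$.

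For the descent, note that $\overline M$ is finitely generated, so its generators all lie in $V\otimes_K K'$ for some finite Galois extension $K'=W(k')[\tfrac 1 p]$ of $K$; thus $\overline M$ arises by base change from a $W(k')$-lattice $M'\subset V\otimes_K K'$, which is itself a Dieudonn\'e lattice by faithful flatness of $W(k')\to W(\overline k)$. The Galois group $G=\mathrm{Gal}(k'/k)$ acts on $V\otimes_K K'$ through its action on $K'$, and this action commutes with the semilinear Frobenius $F$ because the Witt vector Frobenius commutes with every field automorphism. Hence each translate $gM'$ remains a Dieudonn\'e lattice, and the finite sum
\[
N := \sum_{g\in G} gM' \subset V\otimes_K K'
\]
is a $G$-stable Dieudonn\'e lattice, since the conditions $F(\cdot)\subseteq(\cdot)$ and $p(\cdot)\subseteq F(\cdot)$ are preserved under sums of lattices. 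By Galois (equivalently, faithfully flat) descent along the finite \'etale map $W(k)\to W(k')$, $N$ arises as $W(k')\otimes_{W(k)} M$ for the unique $W(k)$-lattice $M:=N^G\subset V$, and the two Dieudonn\'e conditions for $M$ follow from those for $N$ by faithful flatness. This $M$ is the desired Dieudonn\'e module, with $M[\tfrac 1 p]\cong V$ as isocrystals.

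The main obstacle is the explicit verification that $\overline N$ is Dieudonn\'e; this is a short combinatorial computation using elementary properties of the floor function for the chosen exponents $a_k$. Once this is in place, both the Galois orbit sum and the faithfully flat descent are entirely formal.
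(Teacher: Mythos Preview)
Your proof is correct and follows the same two-step outline as the paper: construct an explicit Dieudonn\'e lattice in $E_{s/r}$ over $\overline{k}$, then descend to $k$. The implementations of both steps differ from the paper's, and each variant has its merits.

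For the lattice, the paper takes (for $s\ge 2$) the $W(\overline k)$-span of $e_1/p^{s-1},e_2/p^{s-2},\ldots,e_{s-1}/p,e_s,\ldots,e_r$, a simpler staircase whose exponents decrease by $1$ until reaching $0$ and then stay constant; no floor-function bookkeeping is needed. Your lattice $\bigoplus_k W(\overline k)\cdot p^{a_k}e_k$ with $a_k=\lfloor(k-1)s/r\rfloor$ is a different staircase and works for the same underlying reason (the successive jumps lie in $\{0,1\}$), at the cost of a slightly more delicate verification.

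For the descent, the paper works directly over $\overline k$: it observes that the Galois action sends the chosen lattice $M$ into $p^{-k}M$ for some $k$, and takes $N:=\mathrm{Ker}(\gamma-\iota)$ inside $p^{-k}M$. This is phrased as if $\gamma$ were a single generator of $G_k$, which is literally true only for finite $k$ (the case actually used later in the paper), though the argument can be read with ``for all $\gamma$'' inserted. Your route---pass to a finite Galois extension $k'/k$ over which the lattice is defined, take the orbit sum $N=\sum_{g\in G} gM'$, and then $M=N^G$---is the standard one and works uniformly for arbitrary perfect $k$. The two key observations you isolate, that $F$ commutes with the $G$-action and that the Dieudonn\'e conditions $F(\cdot)\subseteq(\cdot)$ and $p(\cdot)\subseteq F(\cdot)$ are preserved under finite sums, are exactly what is needed. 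One small point worth making explicit: when you pass from $\overline M$ down to the $W(k')$-lattice $M'$, the cleanest way to see $F(M')\subseteq M'$ is to note that $\overline M\cap(V\otimes_K K')=M'$, which follows from faithful flatness of $W(k')\to W(\overline k)$; the condition $pM'\subseteq F(M')$ then descends by the same token, or directly by injectivity of $F$.
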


\begin{proof}
Let us first suppose that $k$ is algebraically closed. Let $i = s/r$ where $\gcd(s,r)=1$ and $r>0$. By the Dieudonn\'e--Manin classification, it follows that $V$ is a direct sum of finitely many copies of  $ E_{s/r}.$ Therefore, in this case, we may assume without loss of generality that $V= E_{s/r}$. Let $K = W(k) [\frac{1}{p}].$ By \cref{ex}, one can identify the isocrystal $V$ as a semilinear map $F: K^r \to K^r$ given by 
$$F(x_1, \ldots, x_r) := (\varphi_K(x_2), \ldots, \varphi_K (x_{r}), p^s\varphi_K(x_1)).$$  Note that $s \le r.$ If $0 \le s \le 1$, one can simply take $M$ to be the $F$-stable lattice $W(k)^{\oplus r}$; in this case the condition $p M \subseteq F(M)$ is clearly satisfied. Now we may assume that $s \ge 2$. Let $e_1, \ldots, e_r$ denote the standard $K$-basis vectors of $K^r.$ We define $M$ to be the $W(k)$-span of the elements in the set
$$\left \{\frac{e_1}{p^{s-1}}, \frac{e_2}{p^{s-2}}, \ldots, e_s, \ldots, e_r \right \} .$$ It follows by construction that $M$ is an $F$-stable lattice and $p M \subseteq F(M).$ 

Now we return to the case when $k$ is only assumed to be perfect and let $\overline{k}$ denotes an algebraic closure. We will argue by Galois descent. Let $\overline{V}$ denote the base change of $V$, viewed as an isocrystal over $\overline{k}.$ Let $\gamma: \overline{V} \to \overline{V}$ denote the Galois action. By the above paragraph, one obtains a Dieudonn\'e module $(M,F)$ corresponding to the isocrystal $\overline{V}.$ Since $M$ is a lattice in $\overline{V}$, the Galois action induces a map 
$\gamma: M \to \frac{M}{p^k}$ for some $k \gg 0.$ Let $\iota: M \to \frac{M}{p^k}$ denote the natural inclusion. We set $N:= \mathrm{Ker}(\gamma - \iota).$ It follows that $N [\frac{1}{p}] \simeq \overline{V}^{\gamma} \simeq V$. Thus $N$ is a lattice in $V$. Let $F$ and $\overline{F}$ denote the Frobenius semilinear endomorphisms of $V$ and $\overline{V}$ respectively. Since $\overline{F}$ and $\gamma$ commutes, and $M$ is $\overline{F}$-stable, it follows that $N$ is $F$-stable. It would be enough to show that $p N \subseteq F(N)$. To this end, let $x \in N.$ Then $p x = \overline{F} (y)$ for some $y \in \overline{M}.$ Therefore, $$\overline{F}(y)= px = p \gamma (x) = \gamma \overline{F}(y) = \overline{F}(\gamma (y)).$$ Since $\overline{F}$ acts bijectively on $\overline{V}$, the above equation implies that $\gamma(y) = y$; i.e., $px = F(y)$ for $y \in N$. This finishes the proof.
\end{proof}{}

\begin{lemma}\label{l2}
Let $V$ be a vector bundle on $(\mathrm{Spec}\,k)^\mathrm{syn}.$ Then there exists a fiber sequence 
$$T \to V \to \bigoplus_{i=-m_1}^{m_2} U_i$$ such that $U_i \left \{i \right\}$ is a vector bundle with Hodge--Tate weights in $\left \{0,1\right \},$ and $T$ is an $F$-gauge such that multiplication by $p^k$ is null-homotopic for some $k \in \mathbb{N}.$ 
\end{lemma}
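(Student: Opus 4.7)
The plan is to follow the strategy of \cref{l1}, but to refine the isogeny-level decomposition using Dieudonn\'e--Manin together with \cref{se}. By \cref{isom}, passing to $V[1/p] \in \mathrm{Vect}((\mathrm{Spec}\,k)^\mathrm{syn}) \otimes \mathbb{Q}_p$ yields an isocrystal over $k$. The slope filtration of an isocrystal over $k$ splits canonically into isotypic pieces (by Galois-equivariant descent from the Dieudonn\'e--Manin decomposition over $\overline{k}$), so one obtains a decomposition
$$V[1/p] \simeq \bigoplus_{s \in \mathbb{Q}} V_s$$
with $V_s$ pure of slope $s$ and only finitely many summands nonzero.

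For each nonzero $V_s$, I would choose an integer $i_s$ with $s - i_s \in [0,1]$ and form the Breuil--Kisin twist $V_s\{i_s\}$, a pure isocrystal of slope in $[0,1]$. By \cref{se}, $V_s\{i_s\}$ is the isocrystal of a Dieudonn\'e module, which by \cref{identify} corresponds to a vector bundle $W_s$ on $(\mathrm{Spec}\,k)^\mathrm{syn}$ with Hodge--Tate weights in $\{0,1\}$. Set $U_s := W_s\{-i_s\}$ and collect summands by the common choice of twist, $U_i := \bigoplus_{s : i_s = i} U_s$. This produces a vector bundle $\bigoplus_{i = -m_1}^{m_2} U_i$ on $(\mathrm{Spec}\,k)^\mathrm{syn}$ (the range is finite since only finitely many slopes occur) with $U_i\{i\}$ of Hodge--Tate weights in $\{0,1\}$, equipped with an isomorphism
$$V[1/p] \simeq \bigoplus_i U_i[1/p]$$
in the isocrystal category.

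Finally, exactly as in the last step of the proof of \cref{l1}, this isomorphism in $\mathrm{Perf}((\mathrm{Spec}\,k)^\mathrm{syn}) \otimes \mathbb{Q}_p$ lifts, after clearing denominators by multiplying by a suitable power $p^k$, to a map $f : V \to \bigoplus_i U_i$ in $\mathrm{Perf}((\mathrm{Spec}\,k)^\mathrm{syn})$ which becomes an isomorphism after inverting $p$. Taking $T := \mathrm{fib}(f)$ yields the desired fiber sequence $T \to V \to \bigoplus_i U_i$, and since $T[1/p] \simeq 0$, multiplication by some power of $p$ is null-homotopic on $T$. The main obstacle, in my view, is bookkeeping for the Breuil--Kisin twists (keeping the conventions for how $\{i\}$ shifts slopes of isocrystals and Hodge--Tate weights of vector bundles consistently aligned) so that the choice $s - i_s \in [0,1]$ genuinely forces $U_i\{i\}$ to have Hodge--Tate weights in $\{0,1\}$; once this is set up, the classical slope decomposition over $k$ together with \cref{se} and the proof technique of \cref{l1} do the rest.
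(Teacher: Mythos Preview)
Your proposal is correct and follows essentially the same approach as the paper's proof: pass to the isocrystal $V[1/p]$, use Galois descent of the Dieudonn\'e--Manin decomposition to split into slope-pure pieces over $k$, Breuil--Kisin twist each piece into slope range $[0,1]$, invoke \cref{se} and \cref{identify} to realize each twisted piece as a Dieudonn\'e module, then untwist, clear denominators to lift the isogeny to an honest map, and take its fiber. The only cosmetic differences are that the paper fixes the twist uniquely via the half-open interval $[a_i,a_i+1)$ whereas you allow either choice when the slope is an integer, and the paper reindexes by twist value at the very end while you group by $i_s$ from the outset; neither affects the argument.
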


\begin{proof}
 Our proof will use the notion of slopes. Let $V'$ denote the isocrystal over $k$ given by $V[1/p].$ Let $\overline{V'}$ the isocrystal over $\overline{k}$ obtained by base change. By the Dieudonn\'e--Manin classification one can write $\overline{V'} = \bigoplus W_i$ where $W_i$ is pure of slope $i$ and all but finitely many of the $W_i$-s are zero. Note that $\overline{V'}$ has a natural Galois action that preserves $W_i$ for all $i$. Therefore, by descent, we obtain a direct sum decomposition $V' = \bigoplus V_i$ such that each $V_i$ is pure of slope $i$ and all but finitely many of them are zero. Let $a_i$ be the unique integer such that $i \in [a_i, a_i+1)$. Then $V_i \left \{a_i \right \}$ is pure of slope $i - a_i$ which is a rational number in the interval $[0,1].$ Let $M_i$ denote the Dieudonn\'e module such that $M_i [\frac{1}{p}] \simeq V_i \left \{a_i \right \}$, as can be chosen by \cref{se}. Therefore, $M_i \left \{-a_i \right \}$ is a vector bundle on $(\mathrm{Spec}\, k)^\mathrm{syn}$ whose isogeny class is isomorphic to $V_i$. In other words, there is an isomorphism 
$$f':V[1/p] \simeq \oplus M_i \left \{-a_i \right \} [1/p].$$ By replacing $f'$ by $p^mf$ for $m \gg 0$ if necessary, we can assume without loss of generality that $f'$ is induced by a map $f:  V \to \oplus M_i \left \{-a_i \right \} $ in $\mathrm{Perf}((\mathrm{Spec}\, k)^\mathrm{syn}).$ Let $T= \mathrm{fib}(f).$ Since $T[1/p] \simeq 0$ in $\mathrm{Perf}((\mathrm{Spec}\, k)^\mathrm{syn}) \otimes \mathbb{Q}_p$ it follows that the zero map on $T$ is homotopic to multiplication by a power of $p$. By changing the indexing, we obtain the claim.
\end{proof}

As we have remarked earlier, in general for an $F$-gauge $M$ over an algebraically closed field $k$, the groups $H^i(M, \mathbb{Z}_p(r))$ need not be finitely generated $\mathbb{Z}_p$-modules. However, in the proposition below, we prove that this issue does not occur when $M$ is a vector bundle on $(\mathrm{Spec}\, k)^{\mathrm{syn}}$ with Hodge--Tate weights in $\left \{0,1 \right \}.$ As we will see, this will require an application of Dieudonn\'e theory in the language of $F$-gauges as developed and studied in \cite{Mon}.

\begin{proposition}\label{ts}Let $k$ be an algebraically closed field. Let $M$ be a vector bundle on $(\mathrm{Spec}\,k)^\mathrm{syn}$ of Hodge--Tate weights in $\left \{0,1\right \}$. Then $H^i (M, \mathbb{Z}_p(r))=0$ whenever $i \ne 0$ or $r \ne \left\{0,1\right\}.$ Moreover, $H^0 (M, \mathbb{Z}_p(r))$ is a finitely generated $\mathbb{Z}_p$-module for $0 \le r \le 1.$   
\end{proposition}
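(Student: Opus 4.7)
The plan is to apply Dieudonn\'e theory (\cref{identify}), which identifies $M$ with a Dieudonn\'e module $(M^u, F)$ over $W(k)$, and then to unpack the syntomic cohomology via the fiber sequence
\[
R\Gamma_{\mathrm{syn}}(M, \mathbb{Z}_p(r)) \to \mathrm{Fil}^r M \xrightarrow{\varphi - \mathrm{can}} M^u
\]
used in the proof of \cref{l0}. Here $M^u$ is a finite free $W(k)$-module, $F$ is $\sigma$-semilinear and injective with $pM^u \subseteq F(M^u)$, and the Verschiebung $V = pF^{-1}$ is a $\sigma^{-1}$-semilinear injection on $M^u$ satisfying $FV = VF = p$.

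Using the Nygaard description $\mathrm{Fil}^r M = F^{-1}(p^r M^u)$ together with $FV = p$ and the injectivity of $F$, one finds $\mathrm{Fil}^r M = M^u$ for $r \le 0$ and $\mathrm{Fil}^r M = p^{r-1} V(M^u)$ for $r \ge 1$, while the divided Frobenius satisfies $\varphi_r(p^{r-1}V(x)) = x$. Reparametrizing via the bijection $p^{r-1}V : M^u \xrightarrow{\sim} \mathrm{Fil}^r M$, the two-term complex becomes an endomorphism of $M^u$: namely $1 - p^{r-1} V$ for $r \ge 1$, and $p^{|r|} F - 1$ for $r \le 0$.

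For $r \le -1$ or $r \ge 2$, this endomorphism is congruent to $\pm 1$ modulo $p$, hence is bijective on $M^u/pM^u$; by $p$-adic completeness of $M^u$ it is bijective on $M^u$, and so $R\Gamma_{\mathrm{syn}}(M, \mathbb{Z}_p(r)) = 0$. For $r = 0$, the endomorphism is $F - 1$: its kernel embeds into $(M^u[1/p])^{F=1}$, which is finite-dimensional over $\mathbb{Q}_p$ by \cref{imp}, so the kernel is a finitely generated $\mathbb{Z}_p$-module. For surjectivity, I reduce modulo $p$ and decompose the finite-dimensional $\sigma$-semilinear $k$-vector space $M^u/pM^u$ into its semisimple and nilpotent summands under $\bar F$: on the nilpotent summand, $1 - \bar F$ is invertible by the geometric series; on the semisimple summand, Lang's theorem (applicable since $k$ is algebraically closed) produces an $\mathbb{F}_p$-basis of $\bar F$-fixed vectors, and the Artin--Schreier surjectivity of $\sigma - 1$ on $k$ then implies $\bar F - 1$ is surjective. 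Hence $F - 1$ is surjective on $M^u$. For $r = 1$, the identical argument applies to $1 - V$, using that $V$ is $\sigma^{-1}$-semilinear and that Artin--Schreier is likewise surjective for $\sigma^{-1}$ over the algebraically closed $k$.

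The main obstacle I expect is carefully justifying the identification of $\mathrm{Fil}^r M$ and the divided Frobenius for the Dieudonn\'e module in terms of $F$ and $V$ (in particular, the clean formula $\mathrm{Fil}^r M = p^{r-1} V(M^u)$ for $r \ge 1$); once this bookkeeping is in place, the vanishing in all cases $r \notin \{0,1\}$ is a direct mod-$p$ argument, and the remaining finiteness in the cases $r \in \{0,1\}$ reduces to a Lang--Artin--Schreier argument over the algebraically closed base field.
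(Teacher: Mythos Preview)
Your approach is correct and genuinely more streamlined than the paper's, with one small slip to fix. The implication ``the kernel embeds into the finite-dimensional $\mathbb{Q}_p$-space $(M^u[1/p])^{F=1}$, so it is finitely generated over $\mathbb{Z}_p$'' is not valid as stated: an arbitrary $\mathbb{Z}_p$-submodule of $\mathbb{Q}_p^d$ need not be finitely generated. The fix is immediate with the tools you already use for surjectivity: $(M^u)^{F=1}$ is a closed (hence $p$-adically complete), $p$-torsion-free $\mathbb{Z}_p$-submodule of $M^u$, and its image in $M^u/p$ lands in $(M^u/p)^{\bar F=1}$, which is finite over $\mathbb{F}_p$ by your Lang-type argument; then $p$-completeness and Nakayama give finite generation. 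The same correction applies verbatim to $(M^u)^{V=1}$ for $r=1$.

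Compared with the paper, your route is more elementary and more symmetric. The paper handles $r>1$ by choosing a splitting $M^u \simeq T \oplus W$ and invoking the explicit Nygaard description $\mathrm{Fil}^i \simeq p^iT \oplus p^{i-1}W$ from \cite{Mon}, then checks surjectivity of $\Phi - s_r$ modulo $p$; it treats $r=1$ separately via the quasisyntomic interpretation $H^*(M,\mathbb{Z}_p(1)) \simeq H^*_{(k)_{\mathrm{qsyn}}}(k, T_p(G^\vee))$ together with \cref{lemmav}. Your reparametrization $p^{r-1}V: M^u \xrightarrow{\sim} \mathrm{Fil}^r M$ (which is exactly the paper's $\mathrm{Fil}^r = p^{r-1}(pT\oplus W)$ rewritten as $p^{r-1}V(M^u)$) collapses the whole analysis for $r\ge 1$ to the single endomorphism $1 - p^{r-1}V$ of $M^u$, so $r\ge 2$ becomes a one-line mod-$p$ argument and $r=1$ is handled on the same footing as $r=0$ by the $F\leftrightarrow V$ symmetry. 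What you gain is uniformity and the avoidance of any appeal to the Tate-module description or \cref{lemmav}; what the paper's approach buys is a more direct link to the geometry of the associated $p$-divisible group, which is conceptually relevant for the later computation in \cref{l3}. Your anticipated ``main obstacle'' (justifying $\mathrm{Fil}^r = F^{-1}(p^rM^u)$) is indeed the only point requiring care, and it is consistent with the paper's explicit description.
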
{}

\begin{proof}
It follows from the definitions that $H^{<0} (M, \mathbb{Z}_p(r))=0$.    

Note that we may also regard $M$ as a Dieudonn\'e module (\cref{identify}). Let $M^u$ denote the free $W(k)$-module of finite rank, equipped with a Frobenius map $F: M^u \to M^u.$ For $r <0 $, $H^* (M, \mathbb{Z}_p(r))$ is computed as cohomology of the two term complex $M^u \xrightarrow{p^{-r}F - \mathrm{id}} M^u$, where the source is in degree $0.$ However, viewed as an object of $D(\mathbb{Z}_p)$, this complex is derived $p$-complete, and is isomorphic to $0$ modulo $p.$ This shows that the complex $M^u \xrightarrow{p^{-r}F - \mathrm{id}} M^u$ is acyclic. Therefore, $H^i (M, \mathbb{Z}_p(r))=0$ for all $i$ and $r <0$.

For $r \ge 0$, $H^* (M, \mathbb{Z}_p(r))$ is computed as cohomology of the two term complex 
\begin{equation}\label{co}
   \mathrm{Fil}^r M^u \xrightarrow{p^{-r}F - \iota} M^u 
\end{equation}where $\mathrm{Fil}^r M^u$ denotes $r$-th step of the Nygaard filtration. Therefore, it follows that $H^{>1} (M, \mathbb{Z}_p(r) )=0$.

Now suppose that $r>1$. We will first show that $H^0 (M, \mathbb{Z}_p(r))=0.$ Since $M^u$ is a free $W(k)$-module, it follows that $H^0 (M, \mathbb{Z}_p(r))$ is $p$-torsion free. Therefore, it is enough to show that $H^0 (M, \mathbb{Z}_p(r))[1/p]=0.$ However, note that the slopes of the isocrystal $M^u[1/p]$ are rational numbers in the interval $[0,1].$ By the Dieudonn\'e--Manin classification, there is no nonzero map $\mathcal{O}\left \{-r\right \} \to M^u[1/p]$ for $r>1.$ Therefore, by \cref{isom}, we obtain $H^0 (M, \mathbb{Z}_p(r))[1/p]=0$, as desired. In order to show that $H^1 (M, \mathbb{Z}_p(r))=0$, we will use \cite{Mon}. By \cite[Thm.~1.11]{Mon}, $M$ corresponds to $F$-gauge associated to a $p$-divisible group $G$, and $\mathrm{Fil}^* M^u \simeq \mathrm{Fil}^*_{\mathrm{Nyg}} H^2_{\Prism }(BG)$. Therefore, we have a decomposition $M^u\simeq T \oplus W$ of $W(k)$-modules such that for $i \ge 1$,
$$\mathrm{Fil}^i M^u \simeq p^i T \oplus p^{i-1} W.$$ Note that there is an isomorphism $T \oplus W \to p^i T \oplus p^{i-1}W$ that sends $(x,y) \mapsto (p^ix, p^{i-1}y). $ Let us denote the map $T \oplus W \to T \oplus W $ that sends $(x,y) \mapsto (p^ix, p^{i-1}y)$ by $s_r$. We also have a map $T \oplus W \to T \oplus W$ that is determined by $(x,y) \mapsto F(x) + \frac{F(y)}{p} \in M^u;$ we denote this map by $\Phi.$ In order prove that $H^i (M, \mathbb{Z}_p(r)) =0$ for $r>1$, we need to prove that the map $  \mathrm{Fil}^r M^u \xrightarrow{p^{-r}F - \iota} M^u $ is surjective. Under the above identifications, it suffices to prove that the map 
$$T \oplus W \xrightarrow{\Phi - s_r} T \oplus W$$is surjective. By $p$-completeness, surjectivity can be checked modulo $p.$ For $r>1$, modulo $p$, the map $s_r =0.$ Therefore, for $r>1$, to show surjectivity of $(\Phi - s_r)$, it is enough to show surjectivity of $\Phi.$ Note that $\Phi$ is the composition $$T \oplus W \xrightarrow{(x,y) \mapsto (px, y)} pT \oplus W \simeq \mathrm{Fil}^1 M^u \xrightarrow{p^{-1}F} M^u.$$
Now the desired surjectivity follows from the surjectivity of $p^{-1}F$. The latter is true because $M^u$, naturally equipped with the structure of a Dieudonn\'e module, satisfies $pM^u \subseteq F(M^u)$.

Now suppose that $r=0$. Then $H^* (M, \mathbb{Z}_p)$ is computed as the cohomology of the complex $M \xrightarrow{F - \mathrm{id}} M$, where the source of the map is in degree $0.$ Note that $F- \mathrm{id}$ is surjective, since it is surjective modulo $p$ and $M$ is $p$-complete. This implies that $H^1 (M, \mathbb{Z}_p)=0.$ Since $M$ is $p$-torsion free, it follows that $H^0 (M, \mathbb{Z}_p)/p$ is kernel of the map $M/p 
\to M/p$ induced by $F- \mathrm{id}$. Since $M/p$ is a finite dimensional $k$-vector space, it follows from \cite[Lem.~4.1.1]{MR23} that $H^0 (M, \mathbb{Z}_p)/p$ is a finite dimensional $\mathbb{F}_p$-vector space. Since $H^0 (M, \mathbb{Z}_p)$ is $p$-complete, it follows that $H^0 (M, \mathbb{Z}_p)$ is a finitely generated $\mathbb{Z}_p$-module.

Finally, we handle the case when $r=1$. Let $G$ be the $p$-divisible group associated to the $F$-gauge $M$. By \cite[Prop.~3.15]{Mon}, we have 
$$H^*_{(k)_\mathrm{qsyn}} (k, T_p(G^\vee)) \simeq H^* (M, \mathbb{Z}_p(1)),$$ where $G^\vee$ is the dual of $G$ and $T_p(G^\vee)$ is the Tate module. In particular, we have $H^0 (M, \mathbb{Z}_p(1)) \simeq T_p(G^\vee)(k).$ Regarding $G^\vee$ as a sheaf on the quasisyntomic site of $k$, we have an isomorphism $G^\vee[p] \simeq T_p(G^\vee)/p$ of (pre)sheaves. This implies that $(T_p(G^\vee)(k))/p \simeq G^\vee [p] (k).$ Since $G^\vee[p]$ is a finite commutative group scheme of rank $p$ over $k$,we see that $G^\vee[p](k)$ is a finite dimensional $\mathbb{F}_p$-vector space. Since $T_p(G^\vee)(k)$ is $p$-complete, we conclude that it must be a finitely generated $\mathbb{Z}_p$-module. This proves that $H^0 (M, \mathbb{Z}_p(1))$ is a finitely generated $\mathbb{Z}_p$-module. To prove $H^1(M, \mathbb{Z}_p(1))=0$ it suffices to prove that $H^1_{(k)_\mathrm{qsyn}}(k, T_p(G^\vee))=0.$ However, this follows from \cref{lemmav}
\end{proof}

\begin{lemma}\label{lemmav}
 Let $T$ be a finite, commutative group scheme of $p$-power rank over an algebraically closed field $k$. Then $H^{>0} _{(k)_\mathrm{qsyn}}(k, T)=0$.   
\end{lemma}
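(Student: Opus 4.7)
The plan is to prove the proposition by devissage to the simple finite commutative $p$-power rank group schemes over the algebraically closed field $k$, and then handle each simple case by realizing it as the kernel of an endomorphism of a smooth commutative group scheme whose qsyn cohomology is easy to compute.

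First I would reduce to the simple cases. Since $T$ has $p$-power rank and $k$ is algebraically closed of characteristic $p$, $T$ admits a composition series $0 = T_0 \subset T_1 \subset \cdots \subset T_n = T$ of closed subgroup schemes whose successive quotients $T_i/T_{i-1}$ are each isomorphic to one of the three simple finite commutative $p$-group schemes over $k$, namely $\mathbb{Z}/p$, $\mu_p$, or $\alpha_p$. Each short exact sequence $0 \to T_{i-1} \to T_i \to T_i/T_{i-1} \to 0$ of qsyn sheaves yields a long exact sequence in qsyn cohomology, so by induction on $i$ it suffices to treat the three simple types.

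Next I would use the standard presentations of $\mathbb{Z}/p$, $\mu_p$, $\alpha_p$ as kernels of morphisms of smooth commutative group schemes. Namely, the Artin--Schreier, Kummer, and Frobenius sequences
\begin{equation*}
0 \to \mathbb{Z}/p \to \mathbb{G}_a \xrightarrow{F-1} \mathbb{G}_a \to 0,\ \ 0 \to \mu_p \to \mathbb{G}_m \xrightarrow{p} \mathbb{G}_m \to 0,\ \ 0 \to \alpha_p \to \mathbb{G}_a \xrightarrow{F} \mathbb{G}_a \to 0
\end{equation*}
are exact as sheaves on $(k)_{\mathrm{qsyn}}$, since for a qsyn $k$-algebra $A$ and a section $a$ of the target, the relevant extraction $A \to A[t]/(t^p-t-a)$ or $A \to A[t]/(t^p-a)$ is finite flat and a local complete intersection, hence quasisyntomic. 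Applying $R\Gamma((k)_{\mathrm{qsyn}}, -)$ and using that (i) $R\Gamma((k)_{\mathrm{qsyn}}, \mathbb{G}_a) \simeq k$ in degree $0$ by fpqc descent for the structure sheaf and (ii) $R\Gamma((k)_{\mathrm{qsyn}}, \mathbb{G}_m) \simeq k^*$ in degree $0$, the resulting long exact sequences reduce the claim to the surjectivity of $F-1: k \to k$ (immediate, as $k$ is algebraically closed), of $F: k \to k$ (immediate, as $k$ is perfect), and of $p: k^* \to k^*$ (immediate, as $k^*$ is $p$-divisible for $k$ algebraically closed of characteristic $p$). In each case the long exact sequence collapses, giving $H^{>0} = 0$.

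The main obstacle is step (ii) above, i.e., the vanishing $H^{>0}((k)_{\mathrm{qsyn}}, \mathbb{G}_m) = 0$ for $k$ algebraically closed. The strategy to resolve this is to compare qsyn with the fppf (or fpqc) topology: for the representable smooth commutative group scheme $\mathbb{G}_m$, the qsyn cohomology agrees with the fppf cohomology, and by Grothendieck's theorem the latter coincides with the étale cohomology of a smooth commutative group scheme over a field, which vanishes in positive degrees over an algebraically closed base. Alternatively, one can argue directly by covering $\mathrm{Spec}\,k$ in the qsyn topology by quasiregular semiperfect $k$-algebras and verifying that $\mathbb{G}_m$ has no higher Čech cohomology on such covers, using that every unit on such an algebra admits $p$-power roots in a further qsyn cover.
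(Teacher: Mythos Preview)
Your devissage to the simple group schemes and your treatment of $\mathbb{Z}/p$ and $\alpha_p$ via the Artin--Schreier and Frobenius sequences on $\mathbb{G}_a$ are exactly what the paper does. The difference is in how you handle $\mu_p$.

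You use the Kummer sequence $0 \to \mu_p \to \mathbb{G}_m \xrightarrow{p} \mathbb{G}_m \to 0$ and reduce to showing $H^{>0}_{(k)_{\mathrm{qsyn}}}(k, \mathbb{G}_m) = 0$, which (as you correctly flag) requires either a comparison between the quasisyntomic and fppf/\'etale topologies for smooth group schemes, or a direct \v{C}ech computation on quasiregular semiperfect covers. This is doable but is not a one-liner, and precise references for the qsyn/fppf comparison are not always easy to pin down. The paper sidesteps $\mathbb{G}_m$ entirely: it uses the fiber sequence
\[
R\Gamma_{(k)_{\mathrm{qsyn}}}(k, \mu_p) \to R\Gamma(k, \mathbb{Z}_p(1)) \xrightarrow{p} R\Gamma(k, \mathbb{Z}_p(1))
\]
and computes $R\Gamma(k, \mathbb{Z}_p(1))$ directly as the two-term complex $W(k) \xrightarrow{F - p} W(k)$, which is acyclic (check modulo $p$). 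This is cleaner and entirely internal to the prismatic/syntomic framework already set up in the paper, whereas your approach is more classical but imports an external comparison statement. Both are valid; the paper's route is shorter and better adapted to its ambient setting.
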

\begin{proof}
Since $k$ is algebraically closed, by devissage we may reduce to the case when $H$ is either $\alpha_p$, $\mathbb{Z}/p$ or $\mu_p$. The case of $\alpha_p$ and $\mathbb{Z}/p$ follow from the exact sequences (of quasisyntomic sheaves)
$$0 \to \alpha_p \to \mathbb{G}_a \xrightarrow{\mathrm{Frob}} \mathbb{G}_a \to 0,$$ and $$0 \to \mathbb{Z}/p \to \mathbb{G}_a \xrightarrow{\mathrm{Frob} - \mathrm{id}} \mathbb{G}_a \to 0.$$ For $\mu_p$, we argue as follows. Note that there is a fiber sequence
$$R\Gamma _{(k)_\mathrm{qsyn}}(k, \mu_p) \to R\Gamma (k, \mathbb{Z}_p(1)) \xrightarrow{p} R\Gamma (k, \mathbb{Z}_p(1)).$$ However, concretely, $H^*(k, \mathbb{Z}_p(1))$ can be computed as cohomology of the complex $$pW(k) \xrightarrow{p^{-1} F - \iota} W(k),$$ or equivalently, as cohomology of $$W(k) \xrightarrow{F - p} W(k).$$ However, this complex is acyclic, as can be checked by reducing modulo $p$. Therefore, $R\Gamma_{(k)_\mathrm{qsyn}}(k, \mu_p) \simeq 0.$ This finishes the proof.
\end{proof}{}

\begin{proposition}\label{coconut}
Let $G$ be a $p$-divisible group over a perfect field $k$. Let $M(G)$ denote the Dieudonn\'e module of $G$. Then $$\dim G= \sum \lambda_i,$$ where $\lambda_i$ is the set of slopes of the isocrystal $M(G)[1/p]$, counted with multiplicities. 
\end{proposition}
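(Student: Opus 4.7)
The plan is to apply the Dieudonn\'e--Manin classification to reduce to the simple isocrystals $E_{s/r}$ and compute both sides directly. First I would pass to the algebraic closure: $\dim G$ is preserved under base change, and so is the multiset of slopes of $M(G)[1/p]$ (by definition, slopes are computed after base change to $\overline k$). So assume $k$ is algebraically closed. By Dieudonn\'e--Manin, $M(G)[1/p] \simeq \bigoplus_j E_{s_j/r_j}^{\oplus m_j}$ with $\gcd(s_j,r_j)=1$ and $r_j>0$. Since $M(G)$ is a Dieudonn\'e module (so $pM(G)\subseteq F(M(G))\subseteq M(G)$ by \cref{identify}), every slope lies in $[0,1]$, i.e., $0 \le s_j \le r_j$. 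Each $E_{s_j/r_j}$ has $K$-dimension $r_j$ and contributes slope $s_j/r_j$ with multiplicity $r_j$, so $\sum \lambda_i = \sum_j m_j s_j$.

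Next I would identify $\dim G$ with $v_p(\det F^{\#}) = \mathrm{length}_{W(k)}(M(G)/F(M(G)))$, where $F^{\#}\colon \sigma^{*} M(G) \to M(G)$ is the $W(k)$-linearization of $F$ (the two quantities agree by Smith normal form). This is the standard expression for $\dim G$ in the Dieudonn\'e-theoretic convention used here; e.g., $\mu_{p^\infty}$ corresponds to $(W(k),F=p\sigma)$, for which $\mathrm{length}(M/F(M))=1$, as expected. A snake-lemma argument comparing $F^{\#}$ on two $F$-stable lattices $M_1 \subseteq M_2$ inside a common isocrystal (using that a linear endomorphism of a finite length $W(k)$-module has kernel and cokernel of equal length) shows that this quantity is an isogeny invariant; it is manifestly additive under direct sums. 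Hence I may replace $G$ by an isogenous $p$-divisible group whose Dieudonn\'e module visibly splits as a direct sum, reducing to the case of a single simple summand $E_{s/r}$ with $0 \le s \le r$.

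For the simple case, I would use the explicit $F$-stable Dieudonn\'e lattice $M$ constructed in the proof of \cref{se}: in the basis $\{e_1/p^{s-1}, e_2/p^{s-2}, \ldots, e_{s-1}/p, e_s, e_{s+1}, \ldots, e_r\}$, the operator $F$ sends the first $s$ basis vectors to $p$ times a permutation of the basis and the last $r-s$ basis vectors to a permutation of the basis, so $v_p(\det F^{\#})=s$. This matches the slope contribution $s$ computed above; summing over $j$ then yields $\dim G = \sum_j m_j s_j = \sum \lambda_i$. The main obstacle is the isogeny-invariance step, namely verifying that $\mathrm{length}_{W(k)}(M/F(M))$ is independent of the choice of Dieudonn\'e lattice within a given isocrystal, which is what allows the Dieudonn\'e--Manin decomposition of the isocrystal to translate into a computation of $\dim G$.
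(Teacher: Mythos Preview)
Your proof is correct and follows essentially the same strategy as the paper: pass to $\overline{k}$, invoke Dieudonn\'e--Manin to reduce to a single simple isocrystal $E_{s/r}$, and compute both sides directly using the identification $\dim G = \dim_k M(G)/F M(G)$ together with an explicit lattice. The only cosmetic differences are that the paper uses the lattice $\mathscr{D}_k/\mathscr{D}_k(F^{r-s}-V^{s})$ rather than the one from \cref{se}, and simply cites isogeny invariance of $\dim G$ rather than deriving it for $\mathrm{length}(M/F(M))$ via the snake lemma as you do.
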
{}

\begin{proof}
 It suffices to prove this after base change. Therefore, we may assume that $k$ is algebraically closed. Note that the dimension of a $p$-divisible group is invariant under isogeny. Therefore, by the Dieudonn\'e--Manin classification, we can reduce to checking the proposition for a $p$-divisible group $G$ such that $M(G)[1/p]$ is isomorphic to $E_{s/r}$ as an isocrystal over $k$ for coprime integers $r,s$ such that $ 0 \le s \le r$ and $r>0$. For example, one may take a $G$ for which $M(G) \simeq \mathscr{D}_k/ \mathscr{D}_k (F^{r-s} - V^{s})$, where $\mathscr{D}_k$ denotes the Dieudonn\'e ring. In this case, it follows that $\dim G = \dim_k M(G)/FM(G) = s.$ However, $E_{s/r}$ only has $s/r$ as a slope, with multiplicity $r$. Thus sum of all slopes with multiplicity equals $s$ as well. This finishes the proof.  
\end{proof}

\begin{remark}\label{hn}
For every $p$-divisible group $G$ over a perfect field $k$, we have an exact sequence
$$0 \to t_{G^\vee} \to M(G)/p \to w_G \to 0,$$ which is called the Hodge--Tate sequence (see \cite[Prop.~3.51]{Mon}). In particular, by viewing $M(G)$ as a vector bundle on $(\mathrm{Spec}\, k)^{\mathrm{syn}}$, the pullback along $B\mathbb{G}_m \to (\mathrm{Spec}\, k)^{\mathrm{syn}}$ identifies with $t_{G^\vee} \oplus \omega_G$ as a graded vector space, where $t_{G^\vee}$ has weight $0$ and $\omega_G$ has weight $1$. The only nonzero Hodge--numbers can be described as follows. We have $h^{0,0}= \dim_k t_{G^\vee} = \dim G^\vee$ and
$h^{1,-1} = \dim \omega_G = \dim G.$    
\end{remark}

\begin{lemma}\label{usefulll}
 In the set up of \cref{hn}, the weighted Hodge--Euler characteristic $\chi(M(G),r)=0$ for $r \le 0$. For $r \ge 1$, we have  $$\chi(M(G), r)= r \dim G^\vee + (r-1) \dim G. $$  
\end{lemma}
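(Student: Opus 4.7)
The plan is a direct substitution into the formula of Definition \ref{whe}, using the explicit list of nonzero Hodge numbers of $M(G)$ recorded in Remark \ref{hn}. By that remark, the only nonzero Hodge numbers of $M(G)$ are $h^{0,0}(M(G)) = \dim G^\vee$ (coming from the weight $0$ piece $t_{G^\vee}$) and $h^{1,-1}(M(G)) = \dim G$ (coming from the weight $1$ piece $\omega_G$, shifted appropriately). So the sum defining $\chi(M(G), r)$ has at most two nonzero terms.

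Concretely, I would expand
$$\chi(M(G), r) = \sum_{\substack{i,j \in \mathbb{Z} \\ i \le r}} (-1)^{i+j}(r-i) h^{i,j}(M(G))$$
and observe that the only possible contributions come from $(i,j) = (0,0)$ and $(i,j) = (1,-1)$. For the first pair the sign is $(-1)^{0+0} = +1$; for the second pair the sign is $(-1)^{1+(-1)} = (-1)^0 = +1$. So both contributions carry a positive sign, producing $r \dim G^\vee$ and $(r-1)\dim G$ respectively, provided the index condition $i \le r$ is satisfied.

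The case analysis is then immediate. If $r < 0$, neither $i = 0$ nor $i = 1$ satisfies $i \le r$, so the sum is empty. If $r = 0$, the pair $(i,j) = (0,0)$ satisfies the index condition but carries the factor $(r-i) = 0$, while $(i,j) = (1,-1)$ is excluded; so again the sum vanishes. If $r \ge 1$, both pairs contribute, giving $\chi(M(G), r) = r \dim G^\vee + (r-1)\dim G$.

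There is no genuine obstacle here; the only thing to watch is the parity computation $(-1)^{1+(-1)} = +1$, which ensures the two contributions have the same sign. The result then reads off directly from Remark \ref{hn} and Definition \ref{whe}.
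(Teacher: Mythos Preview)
Your proof is correct and takes exactly the same approach as the paper, which simply says ``Follows from a direct calculation using \cref{hn}.'' You have spelled out that direct calculation in full, including the sign check $(-1)^{1+(-1)}=+1$ and the case analysis on $r$.
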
{}
 
\begin{proof}
  Follows from a direct calculation using \cref{hn}.  
\end{proof}
Now we are ready to deduce \cref{special} for the case corresponding to $p$-divisible groups over finite fields. 
\begin{proposition}\label{l3}
  Let $r \in \mathbb{Z}$. Let $k= \mathbb{F}_q$, where $q= p^n$. Suppose that $M$ is a vector bundle on $(\mathrm{Spec}\, k)^{\mathrm{syn}}$ with Hodge--Tate weights in $\left \{0,1\right \}$. Suppose that $\rho$ is the order of the zero of $\zeta (M, s).$ Then 
$$\left| \lim_{s \to r} \frac{\zeta (M,s)}{(1-q^{r-s})^\rho}\right| _p = \frac{1}{\mu_{\mathrm{syn}}(M,r)q^{\chi(M,r)}}.$$   
\end{proposition}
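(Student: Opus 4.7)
The plan is to compute both sides of the equality explicitly and verify they agree. Since $M$ is a vector bundle on $(\mathrm{Spec}\, k)^{\mathrm{syn}}$ with Hodge--Tate weights in $\{0, 1\}$, the isocrystal $V := H^0(M_K)$ is the only nonzero cohomology of $M_K$, and corresponds via \cref{identify} to a $p$-divisible group $G$ with $\dim_K V = d := \dim G + \dim G^\vee$, slopes $\lambda_1, \ldots, \lambda_d \in [0, 1]$ summing to $\dim G$ (\cref{coconut}). Let $\alpha_1, \ldots, \alpha_d \in \overline{\mathbb{Q}}_p$ denote the eigenvalues of $F^n$ on $V$, so $v_p(\alpha_i) = n \lambda_i$. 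Then $\zeta(M, s) = \prod_i (1 - \alpha_i q^{-s})^{-1}$; the factor $(1 - \alpha_i q^{-s})$ vanishes at $s = r$ iff $\alpha_i = q^r$, which forces $\lambda_i = r$ and hence $r \in \{0, 1\}$. Writing $-\rho$ for the count of such indices, the limit equals a finite product:
\[
\left|\lim_{s \to r} \frac{\zeta(M, s)}{(1 - q^{r - s})^\rho}\right|_p = \prod_{\alpha_i \ne q^r} |1 - \alpha_i q^{-r}|_p^{-1}.
\]

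For $r \le -1$, every $v_p(\alpha_i q^{-r}) = n(\lambda_i - r) > 0$, so each factor equals $1$; combined with $\mu_{\mathrm{syn}}(M, r) = 1$ (by \cref{ts}) and $\chi(M, r) = 0$ (by \cref{usefulll}), the equality becomes $1 = 1$. For $r \ge 2$, each $v_p(\alpha_i q^{-r}) < 0$, so $|1 - \alpha_i q^{-r}|_p = p^{n(r - \lambda_i)}$; the product equals $q^{rd - \dim G}$, whose inverse matches $q^{-\chi(M, r)}$ using $\chi(M, r) = rd - \dim G$ (from \cref{usefulll}), together with $\mu_{\mathrm{syn}}(M, r) = 1$ (from \cref{ts}).

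For $r \in \{0, 1\}$, \cref{ts} yields that $R\Gamma_{\mathrm{syn}}(\overline{M}, \mathbb{Z}_p(r))$ sits in degree $0$ as a finitely generated $\mathbb{Z}_p$-module, so by \cref{alternating} the stable Bockstein characteristic reduces to the Bockstein characteristic of $(H^0(\overline{M}, \mathbb{Z}_p(r)), \gamma - 1)$, and \cref{uk} yields $\mu_{\mathrm{syn}}(M, r) = \prod_{\beta_j \ne 1} |\beta_j - 1|_p$, where $\beta_1, \ldots, \beta_{n_r}$ are the eigenvalues of $\gamma$ on $(\overline{V})^{\overline{F} = p^r}$ (with $n_r$ the multiplicity of slope $r$). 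The crucial identification is $\beta_j = \alpha_i / q^r$, where $\alpha_i$ runs over the slope-$r$ Frobenius eigenvalues: this follows from the Breuil--Kisin twist $V_r\{r\}$ being a slope-$0$ isocrystal with Frobenius $F^n / q^r$, together with the standard equivalence between slope-$0$ isocrystals over $\mathbb{F}_q$ and unramified $\mathbb{Q}_p$-representations, under which $(\overline{V_r\{r\}})^{\overline{F} = 1} = (\overline{V})^{\overline{F} = p^r}$ with its $\gamma$-action.

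With this identification, for $r = 0$ only the $\lambda_i = 0$ eigenvalues contribute nontrivially to the LHS product, giving $\prod_{\beta_j \ne 1}|1 - \beta_j|_p^{-1} = 1/\mu_{\mathrm{syn}}(M, 0)$, which equals RHS since $\chi(M, 0) = 0$. For $r = 1$, the $\lambda_i < 1$ eigenvalues give $\prod_i p^{-n(1 - \lambda_i)} = q^{-\dim G^\vee} = q^{-\chi(M, 1)}$ via the identity $\sum_{\lambda_i < 1}(1 - \lambda_i) = (d - h_1) - (\dim G - h_1) = \dim G^\vee$ (with $h_1$ the multiplicity of slope $1$), and the $\lambda_i = 1$ eigenvalues give $1/\mu_{\mathrm{syn}}(M, 1)$. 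The main obstacle is the slope-$r$ identification $\beta_j = \alpha_i / q^r$, which requires carefully descending the Dieudonn\'e--Manin decomposition to $\mathbb{F}_q$ and tracking the $F$-action through the slope-$0$ twist and the unramified Galois descent.
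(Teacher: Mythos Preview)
Your proof is correct and follows essentially the same approach as the paper's: express the left-hand side as $\prod_{\alpha_i \ne q^r}|1-\alpha_i q^{-r}|_p^{-1}$, use \cref{ts} and \cref{uk} to compute $\mu_{\mathrm{syn}}(M,r)$ in terms of the $\gamma$-eigenvalues on $(\overline{V})^{\overline{F}=p^r}$, identify those eigenvalues via the slope decomposition, and match the remaining product with $q^{\chi(M,r)}$ using \cref{coconut} and \cref{usefulll}. The only organizational difference is that you split into the three ranges $r\le -1$, $r\ge 2$, $r\in\{0,1\}$ up front, whereas the paper treats all $r$ uniformly and only separates $r\le 0$ from $r>0$ at the very end. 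One small caution: tracking the Galois action through the slope-$0$ equivalence actually gives $\beta_j = q^r/\alpha_i$ rather than $\alpha_i/q^r$ (since on the $\overline{F}=1$ locus one has $\gamma=(F^n\otimes\mathrm{id})^{-1}$), but as $|\beta_j|_p=1$ this does not change $|\beta_j-1|_p$ and hence does not affect your computation.
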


\begin{proof}
 Let $K= W(k) [1/p]$ and $M_K$ denote the isocrystal associated to $M$. Suppose that $G$ is the $p$-divisible group that corresponds to $M$. Let $(u_i)_{i \in I}$ be the set of roots of $\det (1- tF^n|M_K)$ counted with multiplicity. By the Dieudonn\'e--Manin classification, it follows that $\lambda_i:= v_q(u_i) = v_p(u_i)/n$ is the set of slopes of $M_K$ counted with multiplicity. By definition of the zeta function, it follows that 
\begin{equation}\label{eq11}
  \left| \lim_{s \to r} \frac{\zeta (M,s)}{(1-q^{r-s})^\rho}\right|^{-1} _p = \prod_{u_i \ne q^r} |1-u_i q^{-r}|_p . 
\end{equation}

Now we will compute $\mu_{\mathrm{syn}}(M,r).$ Note that $H^{>0}(\overline{M}, \mathbb{Z}_p(r))=0$ by \cref{ts}. Let $\gamma$ denote the Galois action on $H^0 (\overline{M}, \mathbb{Z}_p(r)).$ Since $H^0 (\overline{M}, \mathbb{Z}_p(r))$ is a finitely generated $\mathbb{Z}_p$-module (\cref{ts}) it follows from \cref{uk} that $\mu_{\mathrm{syn}}(M, r) =  \prod_{\alpha_i \ne 1}|\alpha_i - 1|_p$ where $\alpha_i$ is the set of eigenvalues of $\gamma$ on $H^0 (\overline{M}, \mathbb{Q}_p(r)).$ Note that $H^0 (\overline{M}, \mathbb{Q}_p(r)) \simeq (\overline{M}[1/p])^{\overline{F}=p^r}.$ By the Dieudonn\'e--Manin classification, the set of eigenvalues $\alpha_i$ is the same as the set $q^r/u_i$ such that $v_p(q^r/u_i)=0.$ Therefore, $$\mu_{\mathrm{syn}} (M, r) = \prod_{\substack{u_i \ne q^r \\ v_p(q^r/u_i)=0}} |1-q^r/u_i|_p=\prod_{\substack{u_i \ne q^r \\ v_p(q^r/u_i)=0}} |1-u_i q^{-r}|_p. $$ Since $|1- u_iq^{-1}|_p=1$ when $v_p(u_iq^{-r})>0$, we have
\begin{equation}
 \prod_{u_i \ne q^r} |1-u_i q^{-r}|_p = \mu_{\mathrm{syn}} (M,r) \prod_{\substack{u_i \ne q^r\\ v_p (u_iq^{-r})<0}}  |1-u_iq^{-r}|_p. 
\end{equation}
Note that $  |1-u_iq^{-r}|_p =  |1-u_iq^{-r}|_q.$ Combining the above equalities, we see that in order to prove the proposition, it would now suffice to show that 
$$ \prod_{\substack{u_i \ne q^r\\ v_p (u_iq^{-r})<0}}  |1-u_iq^{-r}|_q = q^{\chi(M,r)}.$$ This is equivalent to showing that \begin{equation}\label{eq5}
 \sum_{\substack{i,\,v_q(u_i) < r}}r - v_q(u_i) = \chi(M,r).
\end{equation}To this end, note that the left hand side in \cref{eq5} is equal to $\sum_{i, \, \lambda_i \le r} (r- \lambda_i)$. Since $M_K$ must have slope in $[0,1]$, it follows that the sum is zero when $r \le 0$, which gives the claim in that case by \cref{usefulll}. For $r>0$, we have $\sum_{i, \, \lambda_i \le r} (r- \lambda_i) = r \cdot \mathrm{height} (G) - \sum \lambda_i $. Since $\mathrm{height}(G) = \dim G + \dim G^\vee$, by using \cref{coconut}, it follows that 
$$r \cdot \mathrm{height} (G) - \sum \lambda_i = r \cdot \dim G^\vee + (r-1)\cdot \dim G.$$ Applying \cref{usefulll} again finishes the proof.
\end{proof}
Now we can also deduce \cref{special} in the general case.
\begin{proof}[Proof of \cref
{special}] By \cref{l0} the result is true for dualizable $F$-gauges for which a power of $p$ is null-homotopic. Therefore, by \cref{l1}, we can reduce to the case when $M$ is a vector bundle. By \cref{l2}, we may further reduce to the case when $M$ is a vector bundle with Hodge--Tate weights in $\left \{i, i+1\right \}$ for some integer $i$. By applying a suitable Breuil--Kisin twist and using \cref{twistsz}, we can further reduce to the case when $i=0$. In this case, the result follows from \cref{l3}. This finishes the proof.
\end{proof}

\section{Surfaces over finite fields}\label{surface}In this section, we discuss some consequences of our work in the case of surfaces. As before, let $k= \mathbb{F}_q$, where $q=p^n$. Let $X$ be a smooth, proper, geometrically connected surface over $k$.
\begin{definition}
    We let $$\mu_{\mathrm{syn}}(X, 1):=\mu_{\mathrm{syn}}(M(X), 1),$$ where $M(X)$ is the dualizable object in $F\text{-}\mathrm{Gauge}(k)$ associated to $X$ (see \cref{katz}). 
\end{definition}{}

 By definition, $\mu_{\mathrm{syn}}(X, 1)$ is a rational number and is defined unconditionally. Let $\mathrm{NS}(X)$ be the N\'eron--Severi group of $X$, which is a finitely generated abelian group equipped with a non-degenerate pairing. Suppose that $(D_i)_{i}$ is a basis of $\mathrm{NS}(X)$ modulo torsion. Let $\mathrm{det} (D_i \cdot D_j)$ denote the determinant of the matrix obtained by using the pairing. Let $|\mathrm{NS}_{\mathrm{tors}}(X)|$ denote the torsion subgroup of $\mathrm{NS}(X)$. 

\begin{definition}
In the above set up, we define

$$\beta r(X)_p:= \frac{\mu_{\mathrm{syn}}(X,1)|[\mathrm{NS}_{\mathrm{tors}}(X)]^2|_p}{|\mathrm{det}(D_i \cdot D_j)|_p}.$$    
\end{definition}

The above quantity is defined unconditionally. We will show that if one assumes that the Artin--Tate conjecture (in particular, this implies that the Brauer group $\mathrm{Br}(X)$ is finite) holds, then $\beta r(X)_p =|\mathrm{Br}(X)|_p,$ where the latter denoted $p$-adic norm of the Brauer group. To this end, let us recall the Artin--Tate conjecture. Note that the zeta function for $X$ can be expressed in the following form.
$$\zeta (X, s) =  \frac{P_1(X,q^{-s})P_1(X, q^{1-s})}{(1-q^{-s})P_2(X, q^{-s})(1-q^{2-s})},$$ where $P_i(X,T)$ is defined in terms of characteristic polynomial of (suitable power) the Frobenius on $i$-th $\ell$-adic (or crystalline) cohomology. The Artin--Tate conjecture says that $\mathrm{Br}(X)$ is finite, and 
$$\lim_{s \to 1}\frac{P_2 (X, q^{-s})}{(1-q^{1-s})^{\rho(X)}} = \frac{[\mathrm{Br}(X)]\cdot |\det (D_i \cdot D_j)|}{q^{\alpha(X)}[\mathrm{NS}(X)_{\mathrm{tors}}]^2},$$ where $\rho(X)$ is the rank of $\mathrm{NS}(X)$ and $\alpha(X) = \chi(X, \mathcal{O}_X) - 1 + \dim (\mathrm{Pic Var}(X)).$

\begin{proposition}\label{compar}
Assume that the Artin--Tate conjecture holds for the surface $X$. Then $$\beta r (X)_p = |\mathrm{Br}(X)|_p.$$    
\end{proposition}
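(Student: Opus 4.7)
The proof proceeds by applying the main theorem \cref{special} to $M(X)$ at $r=1$ and combining the resulting identity with the Artin--Tate formula. Since $\zeta(X,s)$ has a pole of order exactly $\rho(X)$ at $s=1$, coming entirely from the vanishing of $P_2(X,q^{-s})$ (neither $P_1(X,q^{-s})$ nor $P_1(X,q^{1-s})$ vanishes at $s=1$ by the Weil bounds, and $(1-q^{-s})(1-q^{2-s})$ is nonzero there), the ``order of zero'' appearing in \cref{special} equals $\rho=-\rho(X)$, and the theorem reads
\[
\left|\lim_{s\to 1}\zeta(X,s)(1-q^{1-s})^{\rho(X)}\right|_p=\frac{1}{\mu_{\mathrm{syn}}(X,1)\,q^{\chi(X,1)}}.
\]

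Next, I would substitute the factorization of $\zeta(X,s)$ from the excerpt together with the Artin--Tate evaluation of $\lim_{s\to1}P_2(X,q^{-s})/(1-q^{1-s})^{\rho(X)}$, take $p$-adic absolute values, and use the elementary identities $|q|_p=q^{-1}$, $|1-q^{-1}|_p=q$, $|1-q|_p=1$. Solving for $\mu_{\mathrm{syn}}(X,1)$ expresses it in terms of $|\mathrm{Br}(X)|_p$, $|\det(D_i\cdot D_j)|_p$, $|\mathrm{NS}_{\mathrm{tors}}(X)|_p$, $|P_1(X,q^{-1})P_1(X,1)|_p$, and the integer $1+\alpha(X)-\chi(X,1)$. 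I would then verify two arithmetic simplifications: $\chi(X,1)=\chi(X,\mathcal{O}_X)$ (from \cref{whe}, since only the $i=0$ summand contributes once one identifies $h^{i,j}(M(X))=\dim H^j(X,\Omega^i_X)$ via \cref{notaa}), and the classical $\alpha(X)=\chi(X,\mathcal{O}_X)-1+g$ where $g=\dim\mathrm{PicVar}(X)$; together these yield $1+\alpha(X)-\chi(X,1)=g$.

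Finally, Poincar\'e duality on $H^1$ of the Picard variety, which is precisely what makes the identity $P_3(X,q^{-s})=P_1(X,q^{1-s})$ used in the factorization of $\zeta$ hold, also gives the functional equation $P_1(X,q^{-1})=q^{-g}P_1(X,1)$. Combining everything and substituting into the definition of $\beta r(X)_p$, all the ``trivial'' factors arising from $H^0, H^1, H^3, H^4$ (powers of $q$ and $P_1$-contributions) conspire to cancel, leaving the claimed equality $\beta r(X)_p=|\mathrm{Br}(X)|_p$. The main obstacle is the careful bookkeeping of this last cancellation, specifically matching the $p$-adic valuation of the product $P_1(X,q^{-1})P_1(X,1)$ with the $q^g$-factor produced by the exponent $1+\alpha(X)-\chi(X,1)$ and with the additional contributions to $\mu_{\mathrm{syn}}(X,1)$ coming from the syntomic cohomology of $\bar X$ in weight $1$; this relies on the slope analysis of Frobenius on $H^1_{\mathrm{crys}}(\bar X)[1/p]$ together with the isocrystal machinery developed in \cref{someres}.
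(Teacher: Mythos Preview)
Your overall strategy coincides with the paper's: apply \cref{special} to $M(X)$ at $r=1$, identify $\chi(X,1)=\chi(X,\mathcal O_X)$ from \cref{whe}, insert the factorization of $\zeta(X,s)$ together with the Artin--Tate limit for $P_2$, take $p$-adic absolute values, and compare. The intermediate identities you list ($|q|_p=q^{-1}$, $|1-q^{-1}|_p=q$, $|1-q|_p=1$, and $1+\alpha(X)-\chi(X,1)=g$) are exactly the ones used in the paper's computation.

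Where you diverge from the paper is in the last paragraph. The paper does \emph{not} decompose $\mu_{\mathrm{syn}}(X,1)$ into ``contributions coming from the syntomic cohomology of $\bar X$ in weight $1$'' or invoke the isocrystal machinery of \cref{someres} again. Once \cref{special} has been applied, $\mu_{\mathrm{syn}}(X,1)$ sits on one side of the equation as a single number; nothing further about its internal structure is needed. The only remaining work is on the zeta-function side: one must know $|P_1(X,1)|_p$ and $|P_1(X,q^{-1})|_p$ separately, not merely their ratio from the functional equation. The paper handles this directly, arguing from the Weil-number property $|\alpha_i|=q^{1/2}$ that $|P_1(X,1)|_p=\prod_i|1-\alpha_i|_p=1$ and that $|P_1(X,q^{-1})|_p=|q^{-2g}|_p\prod_i|q-\alpha_i|_p=q^{g}$. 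Your functional-equation step gives only $|P_1(X,1)|_p/|P_1(X,q^{-1})|_p=q^{-g}$, which by itself leaves a factor of $|P_1(X,1)|_p^{2}$ unaccounted for; this is the actual ``obstacle,'' and it is resolved by the direct $p$-adic evaluation of $P_1(X,1)$, not by revisiting $\mu_{\mathrm{syn}}$. So your plan is correct up to that point, but the final paragraph misidentifies where the missing input lies.
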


\begin{proof}
As a consequence of \cref{special}, unconditionally, we have 
\begin{equation}\label{sir}
 \left| \lim_{s \to 1} \frac{\zeta (X,s)}{(1-q^{1-s})^\rho}\right| _p = \frac{1}{\mu_{\mathrm{syn}}(X,1)q^{\chi(X,1)}}.\end{equation}{} From \cref{whe}, it follows that $\chi(X,1) = \chi(X,\mathcal{O}). $  

We will now obtain another expression for the left hand side assuming the Artin--Tate conjecture.
Note that the set of roots of $P_1(X,t)$ are inverse to the set of eigenvalues of Frobenius on $\ell$-adic cohomology. We will denote the latter set by $\alpha_i$. For each $i$, $\alpha_i$ has absolute value $q^{1/2}.$ In particular, $P_1(X, q^{-s})$ and $P_1 (X, q^{1-s})$ have no zero (or poles) at $s=1$. Moreover, it follows that
$$|P_1 (X, 1)|_p = \prod_i |1- \alpha_i|_p =1.$$
On the other hand, 
$$P_1 (X, q^{-1})|_p = |1/q^{2 \dim (\mathrm{PicVar}(X))}|_p \prod_i |q- \alpha_i|_p =  q^{\dim \mathrm{PicVar}(X)}.$$
By the Artin--Tate conjecture, $\zeta (X, s)$ has a pole at $s=1$ of order $\rho(X)$. Therefore, $\rho = - \rho(X)$ in the equation \cref{sir}. Further, we must have
\begin{equation}
  \left| \lim_{s \to 1} \frac{\zeta (X,s)}{(1-q^{1-s})^\rho}\right| _p =  \frac{q^{\dim \mathrm{PicVar}(X)}\cdot|[\mathrm
  NS(X)_{\mathrm{tors}}]^2|_p}{q \cdot |\mathrm{Br}(X)|_p\cdot |\mathrm{det}(D_i \cdot D_j)|_p \cdot q^{\alpha(X)}}.  
\end{equation}
The right hand side simplifies to $\frac{|[\mathrm
  NS(X)_{\mathrm{tors}}]^2|_p}{|\mathrm{Br}(X)|_p\cdot |\mathrm{det}(D_i \cdot D_j)|_p \cdot q^{\chi(X, \mathcal{O})}}.$
Combining with \cref{sir}, we obtain 
$$\frac{1}{\mu_{\mathrm{syn}}(X,1)q^{\chi(X,\mathcal{O})}}= \frac{|[\mathrm
  NS(X)_{\mathrm{tors}}]^2|_p}{|\mathrm{Br}(X)|_p\cdot |\mathrm{det}(D_i \cdot D_j)|_p \cdot q^{\chi(X, \mathcal{O})}}. $$ This implies the claim.
\end{proof}

\begin{question}\label{qn}
Let $X$ be a surface as before. Is the number $\frac{\mu_{\mathrm{syn}}(X,1)}{|\mathrm{det}(D_i \cdot D_j)|_p}$ square of a rational number? 
\end{question}{}

Note that the numbers above are defined unconditionally. Below, we deduce an affirmative answer by assuming the Artin--Tate conjecture. 
\begin{proposition}\label{compare2}
Let $X$ be a surface over a finite field as before and let $(D_i)_i$ denote a basis of $\mathrm{NS}(X)$ modulo torsion. Suppose that the Artin--Tate conjecture holds. Then $$\frac{\mu_{\mathrm{syn}}(X,1)}{|\mathrm{det}(D_i \cdot D_j)|_p}$$ is a square.  
\end{proposition}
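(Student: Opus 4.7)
The plan is to deduce the claim directly from Proposition~\ref{compar} combined with the classical fact that the order of the Brauer group of a smooth proper surface over a finite field is a perfect square whenever it is finite. Under the Artin--Tate hypothesis, $\mathrm{Br}(X)$ is finite and Proposition~\ref{compar} gives $\beta r(X)_p = |\mathrm{Br}(X)|_p$. Unpacking the definition of $\beta r(X)_p$, this rearranges immediately to
$$\frac{\mu_{\mathrm{syn}}(X,1)}{|\mathrm{det}(D_i\cdot D_j)|_p} \;=\; \frac{|\mathrm{Br}(X)|_p}{|[\mathrm{NS}_{\mathrm{tors}}(X)]|_p^{2}}.$$
The denominator on the right is patently a square in $\mathbb{Q}$, so the problem reduces to showing that $|\mathrm{Br}(X)|_p$ is a square.

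For this step I would invoke the classical theorem that, whenever $\mathrm{Br}(X)$ is finite, the integer $|\mathrm{Br}(X)|$ is a perfect square. This was predicted by Artin--Tate and established through the work of Tate, Milne, Urabe, and Liu--Lorenzini--Raynaud, and ultimately rests on the existence of a non-degenerate alternating pairing on $\mathrm{Br}(X)$. Writing $|\mathrm{Br}(X)| = m^{2}$ for a positive integer $m$ yields $|\mathrm{Br}(X)|_p = |m|_p^{2}$, which is visibly the square of a rational number. Combining with the previous display gives the asserted squareness.

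Modulo this classical squareness input, the argument is a direct algebraic rearrangement and poses no real obstacle. The only aspect that would require genuinely new work is removing the Artin--Tate hypothesis: an unconditional answer to \cref{qn} would presumably require reproving the squareness of $|\mathrm{Br}(X)|$ (or an $F$-gauge-theoretic analogue) from scratch using the machinery of $\mu_{\mathrm{syn}}$ and stable Bockstein characteristic, which lies beyond the scope of the present corollary.
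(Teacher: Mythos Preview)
Your proposal is correct and follows essentially the same route as the paper: both invoke \cref{compar} together with the classical result that $|\mathrm{Br}(X)|$ is a perfect square when finite, and then observe that this forces the ratio in question to be a square. You simply make the algebraic rearrangement through $\beta r(X)_p$ explicit, whereas the paper leaves it implicit.
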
{}
\begin{proof}
 Under the assumptions, the Brauer group $\mathrm{Br}(X)$ is finite and is a square \cite{brauer}. Thus $|\mathrm{Br}(X)|_p$ is a square. The claim now follows from \cref{compar}.   
\end{proof}

\begin{remark}
We expect \cref{qn} to have an affirmative answer unconditionally. It is possible to modify the definition of $\mu_{\mathrm{syn}}(X, 1)$ to obtain an $\ell$-adic analogue for every prime $\ell \ne p$ and ask an analogous question. An affirmative answer to these questions should also give a new approach to showing that if $\mathrm{Br}(X)$ is finite, then $|\mathrm{Br}(X)|$ must be a square.
\end{remark}{}

\begin{remark}
 In odd characteristic, by Milne's work \cite{Mana}, finiteness of $\mathrm{Br}(X)$ implies the Artin--Tate conjectures for $X$. Therefore, in odd characteristic, \cref{compar} and \cref{compare2} holds only under the assumption that $\mathrm{Br}(X)$ is finite.
\end{remark}{}

\bibliographystyle{amsalpha}
\bibliography{main}
\end{document}